\definecolor{Crimson}{RGB}{153, 0, 0}
\definecolor{Scarlet}{RGB}{255, 36, 0}
\definecolor{Azure}{RGB}{0, 127, 255}
\definecolor{Navy}{RGB}{0, 0, 128}
\definecolor{Aranka}{RGB}{0, 135, 85}
\theoremstyle{plain}
\newtheorem{lemma}{Lemma}[section]
\newtheorem{theorem}[lemma]{Theorem}
\newtheorem{proposition}[lemma]{Proposition}
\newtheorem{corollary}[lemma]{Corollary}
\newtheorem{defi}[lemma]{Definition}
\theoremstyle{remark}
\newtheorem{remark}[lemma]{Remark}
\newtheorem{question}[lemma]{Question}
\newcommand{\Aut}{\mathrm{Aut}}
\newcommand{\Cay}{\mathrm{Cay}}
\newcommand{\Sch}{\mathrm{Sch}}
\newcommand{\acts}{\curvearrowright}
\newenvironment{acknowledgement}{\textbf{Acknowledgement.}}{}
\numberwithin{equation}{section}
\begin{document}

% \title[short text for running head]{full title}
\title{Factor-of-iid Schreier decorations of lattices in Euclidean spaces
\let\thefootnote\relax\footnotetext{\noindent\textit{Mathematics subject classification (2020):} 05E18, 05C63, 37A30, 60C05}}

%\subjclass[2020]{05E18, 05C63, 37A30, 60C05}

% 05 COMBINATORICS
% 05C10 Planar graphs; geometric and topological aspects of graph theory
% 05C15 Coloring of graphs and hypergraphs
% 05E18 Group actions on combinatorial structures
% 05C63 Infinite graphs

% GROUP THEORY 
% 20E05 Free nonabelian groups

% 37 DYNAMICAL SYSTEMS AND ERGODIC THEORY
%37A15 General groups of measure-preserving transformations and dynamical systems
% 37A30 Ergodic theorems, spectral theory, Markov operators
% 37A50 Dynamical systems and their relations with probability theory and stochastic processes

% 60 PROBABILITY THEORY AND STOCHASTIC PROCESSES
% 60B15 Probability measures on groups or semigroups, Fourier transforms, factorization
% 60C05 Combinatorial probability
% 60G10 Stationary stochastic processes
% 60K35 Interacting random processes; statistical mechanics type models; percolation theory
% 60G57 Random measures

% choice made: 05E18, 05C63, 37A30, 60C05
% now need to decide what is primary and what is secondary

\author{
  Ferenc Bencs\thanks{The first author is supported by the NKFIH (National Research, Development and Innovation Office, Hungary) grant KKP-133921.}
  %\texttt{bencs.ferenc@renyi.hu}
  \and
  Aranka Hrušková
  %\texttt{aranka@renyi.hu}
  \and
  László Márton Tóth
  %\texttt{laszlomarton.toth@epfl.ch}
}
\date{}

\maketitle
\begin{abstract}
    A Schreier decoration is a combinatorial coding of an action of the free group $F_d$ on the vertex set of a $2d$-regular graph. We investigate whether a Schreier decoration exists on various countably infinite transitive graphs as a factor of iid.
   
    We show that $\mathbb{Z}^d,d\geq3$, the square lattice and also the three other Archimedean lattices of even degree have finitary-factor-of-iid Schreier decorations,
    and exhibit examples of transitive graphs of arbitrary even degree in which obtaining such a decoration as a factor of iid is impossible.

    We also prove that symmetrical planar lattices with all degrees even have a factor of iid balanced orientation, meaning the indegree of every vertex is equal to its outdegree, and demonstrate that the property of having a factor-of-iid balanced orientation is not invariant under quasi-isometry.
\end{abstract}

\section{Introduction}

Let $G$ be a simple connected $2d$-regular graph. A \emph{Schreier decoration} of $G$ is a colouring of the edges with $d$ colours together with an orientation such that at every vertex, there is exactly one incoming and one outgoing edge of each colour.

It is a folklore result in combinatorics that every finite $2d$-regular graph has a Schreier decoration, and it is easy to extend this to infinite $2d$-regular graphs as well. As a generalisation of the finite result, the third author proved in \cite{toth2019invariant} that all $2d$-regular unimodular random rooted graphs admit an invariant random Schreier decoration. It is natural to ask whether such an invariant random Schreier decoration can be obtained as a factor of iid. In this article, we investigate this question for some infinite deterministic graphs, more specifically the $d$-dimensional Euclidean grids, symmetrical planar lattices, and graphs quasi-isometric to the bi-infinite path $P$ of the form $H\times P$. We study non-amenable graphs in a separate paper \cite{ourselves}.

Informally speaking, a Schreier decoration is a factor of iid if it is produced by a certain randomised local algorithm. To start with, each vertex of $G$ gets a random label from $[0,1]$ independently and uniformly. Then it makes a deterministic measurable decision about the Schreier decoration of its incident edges, based on the labelled graph that it sees from itself as a root. Neighbouring vertices must make a consistent decision regarding the edge between them. The factor is \emph{finitary} if the decision is based only on a finite-radius neighbourhood of each vertex. The precise definition is given in Section~\ref{section:basics}. Obtaining combinatorial structures or certain models in statistical mechanics as factors of iid is a central topic in ergodic theory. See~\cite{lyons2017factors} and the references therein for a recent overview. 

A partial result towards a Schreier decoration on a $2d$-regular graph is a balanced orientation of the edges. An orientation of the edges of a graph with all degrees even is \emph{balanced} if the indegree of any vertex is equal to its outdegree. For finite graphs, the term \emph{eulerian orientation} is often used \cite{schrijver1983bounds}, and when $G$ is 4-regular in particular, a balanced orientation is known as an ice configuration in statistical physics \cite{Welsh,Baxter}.
Every Schreier decoration gives a balanced orientation by forgetting the colours. 

The main results of the present paper are the following.

\begin{theorem}\label{thm:archimedean_schreier}
Let $\Lambda$ be any of the four Archimedean lattices with even degrees (pictured in Figure~\ref{fig:lattices}): the square lattice, the triangular lattice, the Kagomé lattice or the $(3,4,6,4)$ lattice.  
There is a finitary $\Aut(\Lambda)$-factor of $\left([0,1]^{V(\Lambda)},\mu_\Lambda\right)$ which is a.s.\ a Schreier decoration of $\Lambda$. Moreover, it has almost surely no infinite monochromatic paths.
\end{theorem}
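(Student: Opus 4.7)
My plan is to build a finitary factor of iid that decomposes the edges of $\Lambda$ into $d$ colour classes each of which is a disjoint union of \emph{finite} cycles, and then to orient each cycle: a Schreier decoration whose monochromatic classes are all finite automatically has no infinite monochromatic path, so this yields both conclusions at once. The construction splits into two stages: a finitary $\Aut(\Lambda)$-equivariant partition of $V(\Lambda)$ into finite tiles, and a decoration of each tile by a combinatorial template that keeps every monochromatic component inside a bounded region.

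For the tiling I would pick an $\Aut(\Lambda)$-equivariant sparse set of ``centres'' from the iid labels---say, the vertices whose label is the strict maximum in their $r$-ball for a fixed $r$---and take the tiles to be their Voronoi cells; since centres have positive density these cells are a.s.\ finite, and which tile contains a given vertex is determined by the labels in a random but a.s.\ finite neighbourhood, ensuring finitariness. For the decoration I would exhibit, for each of the four lattices, a periodic Schreier decoration whose monochromatic classes are all bounded. On $\mathbb{Z}^2$ this can be done by the rule ``colour the edge between $(i,j)$ and $(i{+}1,j)$ red iff $i$ is even, and the edge between $(i,j)$ and $(i,j{+}1)$ red iff $j$ is even'', which produces red and blue $4$-cycles around alternating $2{\times}2$ blocks; analogous periodic templates assembled from small faces handle the triangular lattice (three colours cycling around the three rotation classes of triangles) and the Kagom\'e and $(3,4,6,4)$ lattices (two colours cycling around triangles of the two orientations). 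Each tile inherits such a template anchored at its centre, and the resulting finite cycles are oriented using further iid labels, for example by sending the minimum-labelled vertex on each cycle to its minimum-labelled neighbour on the cycle.

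The main obstacle is that the periodic templates are $\Aut(\Lambda)$-invariant only modulo a finite-index subgroup, so neighbouring tiles may carry templates in different ``phases'' and cycles can fail to close at tile boundaries, potentially reconnecting into arbitrarily long monochromatic components. I plan to handle this by surrounding each tile with a buffer corridor whose width is a fixed multiple of the template's period and specifying a small combinatorial repair rule that closes off mismatched cycles into bounded loops while preserving the Schreier property. Showing that such a repair is always possible in an $\Aut(\Lambda)$-equivariant way, and that the resulting factor is genuinely finitary with only finite monochromatic classes, is the technically delicate part of the argument; the key combinatorial input is that each of these four lattices admits Schreier decorations of all sufficiently large finite subregions compatible with any prescribed boundary pattern.
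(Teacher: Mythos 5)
Your high-level plan (cut $\Lambda$ into finite pieces as a finitary factor, decorate each piece with a local template whose monochromatic classes are short cycles, then strongly orient each finite cycle) is indeed the same strategy the paper follows, and your $\mathbb{Z}^2$ template is exactly the paper's ``inner pattern'' of monochromatic $C_4$s. But the proposal has a real gap exactly where you flag ``the technically delicate part,'' and it is not a technicality — it is the whole content of the proof.

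First, the claim that ``each of these four lattices admits Schreier decorations of all sufficiently large finite subregions compatible with any prescribed boundary pattern'' is false. A Schreier decoration is in particular a balanced $d$-edge-colouring, and prescribing the colours/orientations on the boundary of a region imposes parity constraints: if, say, the boundary data forces a colour class to meet the boundary in a single arc with two loose ends of the wrong parity, no completion inside the region exists. So the ``repair rule'' you invoke cannot be justified by this blanket claim. Second, a flat Voronoi tiling gives you finite tiles but no control over the \emph{shape} of the interfaces: a tile can have arbitrarily many neighbours, boundaries between distinct pairs of tiles can run arbitrarily close to one another, and a monochromatic path could in principle snake along the network of tile boundaries forever. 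Nothing in the proposal rules this out.

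The paper gets around both problems by replacing the flat tiling with a \emph{nested hierarchy}: each finite cluster $C$ has a unique parent $C^+$ that surrounds it, and the hierarchy can be made $k$-spaced for any $k$ (Proposition~\ref{prop:hierarchy}). The hierarchy is produced not by Voronoi cells but by critical site percolation together with the Bollobás--Riordan/Zhang matching-lattice theorem (Theorem~\ref{thm:matchlattice}), which guarantees a.s.\ finite clusters with a well-defined surrounding structure. Then clusters at alternating depths of the hierarchy get two \emph{different} kinds of patterns: your $C_4$-template (``inner pattern'') and a translation-invariant ``interface pattern'' whose colour classes are infinite parallel lines. Interface patterns are trivially compatible with one another across a boundary, and the inner/interface mismatch is controlled by a local ``guards'' argument showing that at each vertex either $0$ or $2$ edges get recoloured, preserving balance. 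Finiteness of monochromatic components is then forced: any long monochromatic path must pass deep through some inner-pattern cluster, where for at least four consecutive edges the inner pattern applies unperturbed, trapping the path in a $C_4$. Your proposal has no analogue of either the hierarchy (which controls where interfaces can meet) or the two-pattern scheme (which makes the boundary amalgamation actually work), so as written it does not give a proof.
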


\begin{theorem}\label{thm:Zd}
Let $\Lambda_{\square}^d$ denote the $d$-dimensional Euclidean grid.  For every $d\geq3$, there is a finitary $\Aut(\Lambda_{\square}^d)$-factor of iid which is a.s.\ a Schreier decoration of $\Lambda_{\square}^d$. Moreover, it has almost surely no infinite monochromatic paths.
\end{theorem}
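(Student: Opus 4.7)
The goal is to produce a finitary $\Aut(\Lambda_{\square}^d)$-factor of iid Schreier decoration of $\Lambda_{\square}^d$ in which every color class is a disjoint union of finite directed cycles. The starting point is the natural decoration in which color $i$ coincides with the $i$-th coordinate direction and each color is globally oriented; this is a valid Schreier decoration, but its color classes are the bi-infinite straight lines of the grid, which is exactly what we need to avoid.

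I would first build a finitary factor of iid partition of $\Lambda_{\square}^d$ into tiles of uniformly bounded diameter, by a standard toast-type construction: use the iid labels to pick centers and grow blocks according to a rule decidable from a bounded neighborhood. The eventual monochromatic cycles will then be forced to remain inside individual tiles.

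Within each tile, starting from the natural decoration, I would apply a local ``U-turn'' modification near the boundary. Every time a directed color-$i$ edge is about to exit the tile through a face perpendicular to $e_i$, I reroute it via a short local pattern that swaps color $i$ with a perpendicular color $j$ on two edges, so that the color-$i$ monochromatic path bends back inside the tile. Each such swap creates a deficit in color $j$, which I absorb with a further swap using a third free direction $k$, and the process terminates. This is precisely where the hypothesis $d \geq 3$ enters the argument: for $d = 2$ no free perpendicular direction is available, and indeed the $d = 2$ case is handled by the more delicate argument behind Theorem~\ref{thm:archimedean_schreier}.

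The main obstacle is global consistency at shared tile boundaries: adjacent tiles must agree on the color and orientation of every boundary edge, and the local rule must preserve the Schreier property at every vertex, including those near corners where several tile faces meet. I would arrange this by designing the U-turn pattern to depend only on the local boundary geometry --- the direction of the face and the relative position within it --- rather than on the identity of the tile, so that neighboring tiles agree automatically; if needed, the tiles can be enlarged by a bounded amount to create a buffer between successive modifications. Once consistency is verified, checking that every color class is contained in a single tile, and therefore finite, reduces to a bounded combinatorial case analysis on the local pattern, which also yields the finitary radius of the factor map.
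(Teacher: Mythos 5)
The paper's proof takes a fundamentally different route from yours. Instead of a flat partition into bounded-diameter tiles, it constructs a nested \emph{hierarchy} of finite clusters (via the Gao--Jackson--Krohne--Seward / Marks--Unger toast construction, Corollary~\ref{cor:spaced_hierarchy_Z_d}) together with a $(2d-2)$-valued level function from Proposition~\ref{prop:hierarchy}. Even-level clusters get \emph{interface patterns} (each color travels in a fixed coordinate direction), odd-level clusters get \emph{inner patterns} (two of the $d$ colors form monochromatic $C_4$-s in a 2D slab, the rest go straight), the patterns propagate up the hierarchy changing by at most one color-transposition per level, and boundaries are amalgamated using the square-lattice argument. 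Crucially, monochromatic paths \emph{are allowed to escape} a cluster into its parent; finiteness of a color-$i$ path comes because a.s.\ some ancestor cluster has an inner pattern in which $c_i$ is one of the two $C_4$-colors. So the paper never tries to confine cycles to a single piece, and cycle lengths are a.s.\ finite but \emph{unbounded}.

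Your proposal has a genuine gap precisely where you flag "the main obstacle." If every color-$i$ path in tile $T$ must U-turn before crossing the tile face, the grid edges that do cross the face must carry some other color; those edges are shared with the adjacent tile $T'$, which runs its own modifications. Because a Schreier decoration makes each color a permutation of $V$, changing the outgoing color-$j$ edge at a single vertex forces a compensating change at the old target, which forces another, and so on. You assert that assigning a third direction $k$ "absorbs the deficit" and "the process terminates," but this cascade is the whole difficulty, and neither its termination inside a bounded region nor its compatibility with the neighboring tile's cascade (nor the situation at corners where several faces and hence several cascades meet) is established; a "bounded combinatorial case analysis" is promised but not carried out. Note also that if your construction worked, every monochromatic cycle would lie inside a tile of diameter $\le D$, giving uniformly bounded cycle lengths — a strictly stronger conclusion than Theorem~\ref{thm:Zd}, which the paper's hierarchy method does not achieve and which should therefore be a warning sign. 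Finally, your reading of why $d\ge 3$ is needed (a free perpendicular direction for the U-turn) is not the role it plays in the paper, where $d\ge 3$ is what forces the switch from the planar-percolation hierarchy of Section~\ref{section:clusters_and_hierarchy} to the Borel-combinatorial one, and where the extra directions are used to make the per-level interface patterns differ by a single transposition.
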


As far as we are aware,
this had not been known, not even in the case of the square lattice.
Ray and Spinka show that balanced orientations (under the name ``6-vertex-model'') exist on the square lattice as finitary factors of iid~\cite[Remark 5.3.]{ray2019finitary}. Schreier decorations (that would be the ``24-vertex model'' in their terminology) are not investigated. In \cite{thornton2020factor}, Thornton studies factors of iid which are approximate and exact Cayley diagrams, but his results do not overlap with ours. 

Parts of the proof have to be adapted to the individual lattices, but our approach remains the same throughout.
We break the lattices into a hierarchy of finite pieces using percolation theory.
Then for each piece independently, we choose an edge-$d$-colouring scheme
such that we can ensure that every monochromatic connected subgraph is a finite cycle. Each cycle will orient itself strongly. We also use a similar hierarchy argument to find balanced orientations on symmetrical planar lattices; see Theorem~\ref{thm:planar_lattice_balanced_orientation}.

\begin{figure}[h]
    \centering
    \begin{subfigure}[p]{.3\linewidth}
        \includegraphics[width=\linewidth]{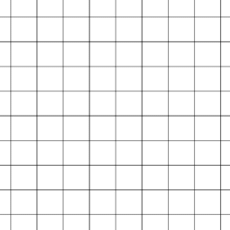}
        \caption{Square lattice}
    \end{subfigure}\qquad%
    \begin{subfigure}[p]{0.3\linewidth}
        \includegraphics[width=\linewidth]{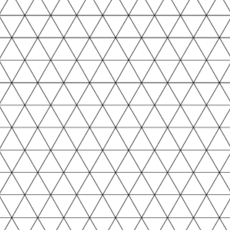}
        \caption{Triangular lattice}
    \end{subfigure}
    
    \vspace{2em}
    
    \begin{subfigure}[p]{.3\linewidth}
        \includegraphics[width=\linewidth]{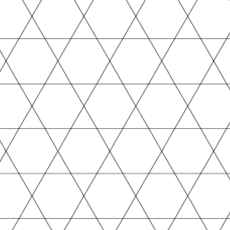}
        \caption{Kagom\'e lattice}
    \end{subfigure}\qquad%
    \begin{subfigure}[p]{0.3\linewidth}
        \includegraphics[width=\linewidth]{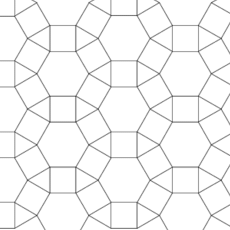}
        \caption{$(3,4,6,4)$ lattice}
    \end{subfigure}
    \caption{Archimedean lattices of even degree}
    \label{fig:lattices}
\end{figure}

Next we show examples in which it is impossible to obtain a Schreier decoration as a factor of iid.

\begin{theorem}\label{thm:counterexample}
For every $d \geq 1$, there exists a $2d$-regular transitive graph that has no factor of iid balanced orientation. In particular, it has no factor of iid Schreier decoration.
\end{theorem}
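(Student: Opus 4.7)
The plan is to show, for each $d\geq 1$, that the Cartesian product graph $G_d:=\mathbb{Z}\,\square\,K_{2d-1}$ is a $2d$-regular transitive graph admitting no factor-of-iid balanced orientation. Its vertex set is $\mathbb{Z}\times[2d-1]$, with edges $\{(k,v),(k\pm 1,v)\}$ and $\{(k,v),(k,v')\}$ for $v\neq v'$. The automorphism group contains the translation $\tau:(k,v)\mapsto(k+1,v)$, the permutations of $[2d-1]$ (so $G_d$ is vertex-transitive), and the reflection $R:(k,v)\mapsto(-k,v)$. The case $d=1$ degenerates to $G_1=\mathbb{Z}$.

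The key ingredient is an integer-valued flow invariant. For a balanced orientation $\vec E$, set
\[
f_k(\vec E):=\bigl|\{v:(k,v)\to(k+1,v)\}\bigr|-\bigl|\{v:(k+1,v)\to(k,v)\}\bigr|.
\]
Summing $\mathrm{outdeg}-\mathrm{indeg}$ over the slice $\{k\}\times[2d-1]$ gives $0$; the intra-slice $K_{2d-1}$ edges contribute $0$, while the inter-slice edges to slices $k+1$ and $k-1$ contribute $f_k$ and $-f_{k-1}$ respectively. Hence $f_k\equiv f$ is constant in $k$. Since consecutive slices are joined by $2d-1$ edges, $f$ has the same parity as $2d-1$ and is in particular odd, so $f\neq 0$.

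Suppose for contradiction that some $\Aut(G_d)$-factor $\Phi$ of iid produces a.s.\ a balanced orientation. Then $\Phi_*\mu$ is $\Aut(G_d)$-invariant. Since $\tau$ acts freely on $V(G_d)$ and the iid is a Bernoulli shift under $\tau$, both the iid and its factor $\Phi_*\mu$ are $\tau$-ergodic. The function $f$ is $\tau$-invariant, hence $\Phi_*\mu$-a.s.\ constant, say equal to $c$. However, $R\in\Aut(G_d)$ reverses every inter-slice edge direction, so $f\circ R=-f$; by $R$-invariance of the distribution, $c=-c$, contradicting that $f$ is odd.

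The main obstacle is conceptual, namely identifying a suitable obstruction: one needs a graph that is simultaneously $2d$-regular, transitive, and equipped with both a translation (giving ergodicity) and a direction-reversing automorphism whose cross-sections have odd size. The product $\mathbb{Z}\,\square\,K_{2d-1}$ meets all these demands and reduces the remaining work to short parity and ergodicity arguments; any finite vertex-transitive $(2d-2)$-regular graph of odd order could replace $K_{2d-1}$.
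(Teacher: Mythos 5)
Your proof is correct and uses exactly the paper's construction: your $G_d$ is the paper's $K_{2d-1}\times P$ (with $P$ the bi-infinite path), and your net-flow invariant $f_k$, its constancy in $k$, and its odd parity are precisely the content of the paper's Proposition~\ref{prop:finite_times_Z}. The finishing move differs slightly. The paper appeals to correlation decay of factor-of-iid processes: two far-apart copies of the sign-of-$f$ event should become nearly independent, each with marginal probability $\tfrac12$ by symmetry, so the deterministically impossible event of the two signs disagreeing would have probability bounded away from zero. You instead note that a factor of a Bernoulli $\mathbb{Z}$-shift is $\tau$-ergodic, so the $\tau$-invariant quantity $f$ is a.s.\ equal to a constant $c$, and then invoke the reflection $R\in\Aut(G_d)$ to force $c=-c$, contradicting that $c$ is odd. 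Your ending makes the role of the reflection explicit (the paper leans on it only implicitly in asserting the $\tfrac12$ marginal) and trades quantitative mixing for bare ergodicity, which is a mild streamlining; otherwise the two arguments coincide.
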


Theorem~\ref{thm:counterexample} answers \cite[Problem 3.4]{thornton2020orienting}. However, the examples we construct are somewhat unsatisfactory because they are all quasi-isometric to the bi-infinite path $P$. We do not know of any examples of transitive graphs with no factor of iid Schreier decoration that are not quasi-isometric to $P$. 

Furthermore, Theorems~\ref{thm:archimedean_schreier} and~\ref{thm:Zd} can be phrased in terms of Bernoulli graphings over the respective graphs, yielding the following corollary.

\begin{corollary}\label{cor:Bernoulli_grpahing}
The Bernoulli graphings of the Archimedean lattices as well as that of $\Lambda_\square^d$ admit a probability-measure-preserving action of the free group $F_d$ which satisfies that two vertices $x,y$ of the graphing are adjacent if and only if there is a generator $\gamma$ of $F_d$ such that $\gamma.x=y$.
\end{corollary}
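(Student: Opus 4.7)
The plan is to translate the factor-of-iid Schreier decoration produced in Theorems~\ref{thm:archimedean_schreier} and~\ref{thm:Zd} into a measurable, measure-preserving $F_d$-action on the Bernoulli graphing, using the standard dictionary between Schreier decorations of $2d$-regular graphs and $F_d$-actions.

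First I would recall the dictionary: a Schreier decoration of a $2d$-regular graph $G$ is equivalent to the data of an $F_d$-action on $V(G)$ whose Schreier graph with respect to the free generating set is $G$ itself. Under this correspondence, the generator $\gamma_i$ sends each vertex $v$ to the unique endpoint of the outgoing $i$-coloured edge at $v$, and two vertices $u,v$ are adjacent in $G$ iff $v = \gamma u$ for some $\gamma \in \{\gamma_1^{\pm 1},\ldots,\gamma_d^{\pm 1}\}$.

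Let $G$ be one of the lattices appearing in Theorems~\ref{thm:archimedean_schreier} or~\ref{thm:Zd}, and let $\Phi$ be the associated finitary $\Aut(G)$-factor of iid that almost surely outputs a Schreier decoration of $G$. The Bernoulli graphing $(X,\nu)$ of $G$ is the canonical probability-measure-preserving graphing built from $([0,1]^{V(G)},\mu_G)$ by quotienting by $\Stab(o)$ and declaring two labellings adjacent whenever they differ by an $\Aut(G)$-shift taking the root to one of its neighbours. Because $\Phi$ is $\Aut(G)$-equivariant and measurable, its pointwise output descends to a measurable colouring and orientation of the edges of $(X,\nu)$. For each $i \in \{1,\ldots,d\}$ I would then define $T_i : X \to X$ by sending a vertex $x$ to the unique neighbour of $x$ reached along the outgoing $i$-coloured edge; this is a measurable perfect matching inside a measure-preserving graphing, hence a measure-preserving bijection of $(X,\nu)$. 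By the universal property of $F_d$, the assignment $\gamma_i \mapsto T_i$ then extends uniquely to a probability-measure-preserving action $F_d \acts (X,\nu)$.

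The adjacency property required by the statement is immediate from the Schreier condition: every edge of the graphing carries a unique colour $i$ and a unique orientation, so $x$ and $y$ are adjacent iff $y = T_i(x)$ or $y = T_i^{-1}(x)$ for some $i$, which is the same as $y = \gamma \cdot x$ for some $\gamma$ in the symmetric generating set $\{\gamma_1^{\pm 1},\ldots,\gamma_d^{\pm 1}\}$ of $F_d$. The main (and really only) obstacle is making precise the descent from the equivariant measurable factor $\Phi$ to an actual action on $(X,\nu)$ -- once one checks that a measurable $\Aut(G)$-equivariant perfect matching on the labelled graph yields a measure-preserving transformation of the Bernoulli graphing, the statement follows formally. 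All the substantive content already sits inside Theorems~\ref{thm:archimedean_schreier} and~\ref{thm:Zd}; the corollary itself is essentially an unpacking of definitions.
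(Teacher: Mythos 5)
Your proposal is correct and is exactly the standard dictionary between Schreier decorations and $F_d$-actions that the paper invokes without spelling it out (the paper states that the theorems ``can be phrased in terms of Bernoulli graphings'' and defers the formalities to the companion paper \cite{ourselves}). The one minor slip is terminological: each $T_i$ is a Borel \emph{automorphism} of $(X,\nu)$, not a ``perfect matching'' (it is not an involution); what actually matters, and what you correctly identify, is that $T_i$ moves each point along a single edge of a measure-preserving graphing and hence preserves $\nu$, and that the $T_i^{\pm1}$ together realise precisely the edge relation of the graphing.
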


This  connection is another important motivation for studying factor of iid Schreier decorations because we would like to understand which $2d$-regular graphings are so generated by actions of $F_d$. Our main question can be equivalently formulated as follows: given a (transitive unimodular) $2d$-regular graph $G$, is the Bernoulli graphing of $G$ generated by a p.m.p.\ action of $F_d$?
The connection is spelled out in our parallel paper \cite{ourselves}, where we investigate non-amenable graphs and utilise it in the other direction. Our main result there is the following.

\begin{theorem}[\cite{ourselves}]\label{thm:non-amenable_balanced_orientation}
Every non-amenable quasi-transitive unimodular $2d$-regular graph has a factor of iid that is almost surely a balanced orientation. 
\end{theorem}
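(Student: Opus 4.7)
The plan is to reduce the existence of a factor-of-iid balanced orientation to finding a factor-of-iid perfect matching in an auxiliary bipartite graph, and then to invoke a matching theorem for non-amenable unimodular graphs.

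First I would reformulate a balanced orientation of a $2d$-regular graph $G$ as a selection of a head $h(e) \in e$ for every edge $e \in E(G)$ satisfying $|h^{-1}(v)| = d$ at every vertex $v$. Introduce the incidence bipartite graph $\tilde{G}$ with parts $V(G)$ and $E(G)$, where $v \sim e$ iff $v \in e$. Balanced orientations correspond bijectively to spanning subgraphs of $\tilde{G}$ in which each $v \in V(G)$ has degree $d$ and each $e \in E(G)$ has degree $1$; equivalently, after splitting each $v \in V(G)$ into $d$ identical copies $v^{(1)},\dots,v^{(d)}$ carrying the same incidences, to perfect matchings in the resulting $2d$-regular bipartite graph $B$. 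Counting degrees shows that $|V(G)|\cdot d = |E(G)|$ is a necessary condition for a perfect matching on $B$, which is automatic from $2d$-regularity, and the Hall-type condition $m_S \le d|S|$ for $S\subseteq V(G)$ also holds for free.

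The next step is to verify that $B$ inherits the hypotheses of the theorem. The action of $\Aut(G)$ extends to $B$ (acting trivially on the extra index $i$), making $B$ unimodular and quasi-transitive; a direct comparison of boundaries shows that the edge-isoperimetric constant, and hence the spectral radius of the simple random walk on $B$, is controlled by that on $G$, so $B$ is non-amenable whenever $G$ is. Any factor of iid on $G$ can be promoted to one on $B$ by enlarging the label at each $v\in V(G)$ to encode $d$ independent coordinates (one per copy $v^{(i)}$) together with an independent label on each edge; conversely a matching of $B$ that is $\Aut(G)$-equivariant descends to a balanced orientation of $G$ that is a factor of the original labels.

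Finally, one applies a factor-of-iid perfect matching construction on non-amenable unimodular quasi-transitive bipartite graphs, in the spirit of Lyons--Nazarov, to $B$; the resulting matching is precisely a factor-of-iid balanced orientation of $G$. The main obstacle I anticipate is the precise invocation of such a matching theorem at this level of generality: the original constructions are stated for non-amenable Cayley graphs, and $B$ is typically not Cayley. One most likely has to adapt the spectral and percolative arguments underlying the Lyons--Nazarov method to the quasi-transitive bipartite setting, or prove the required $(d,1)$-biregular factor variant on $\tilde{G}$ directly, using non-amenability as the crucial input that forces the local matching-correction procedure to converge. A possible alternative route, if a suitable matching result is not immediately available off the shelf, is to start from an iid uniformly random orientation, measure the $\{-d,-d+2,\dots,d\}$-valued imbalance at each vertex, and use an equivariant factor-of-iid pairing of positive and negative defects, along with a flip along connecting paths; the spectral gap afforded by non-amenability is what would make both the pairing and the path transport summable and hence well defined.
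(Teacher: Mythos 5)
This theorem is not proved in the present article: it is stated with a citation to the companion paper \cite{ourselves}, so there is no local proof to compare against. That said, the discussion in Section~\ref{section:open_questions} lets one read off the companion paper's strategy: Definition~22 there introduces a class $\mathcal{C}^*_{2d}=\{G^* : G \text{ is }2d\text{-regular}\}$ of $2d$-regular bipartite auxiliary graphs, and Lemma~25 identifies factor-of-iid balanced orientations of $G$ with factor-of-iid perfect matchings of $G^*$. Your bipartite graph $B$ --- the vertex--edge incidence graph with each $v$ split into $d$ twin copies, so that $v$'s star spans a $K_{d,2d}$ --- is precisely the construction the paper alludes to (note the remark that $(H\times P)^*$ ``factorises into copies of $K_{d,2d}$''). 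So your reduction is, in all likelihood, the same as the companion paper's, and your checks that $B$ is $2d$-regular bipartite, quasi-transitive, unimodular, and non-amenable whenever $G$ is, are all sound.

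The gap you flag yourself is the genuine one, and it should not be minimised. A factor-of-iid perfect matching theorem for non-amenable, quasi-transitive \emph{unimodular} bipartite graphs is not an off-the-shelf citation: Lyons--Nazarov prove it for bipartite non-amenable Cayley graphs, Csóka--Lippner remove bipartiteness but remain in the Cayley setting, and $B$ is essentially never Cayley (it has two vertex orbits and nontrivial twins). One must re-run the augmenting-path argument in the unimodular quasi-transitive setting, checking that non-amenability still yields exponential tails on augmenting-path length and that the mass-transport bookkeeping survives two orbit types; this is plausible but is a real lemma, not a remark. A smaller point you elide: a factor of iid on $B$ is measurable with respect to labels on $V(B)=\tilde V \cup E(G)$, so you must equivariantly manufacture iid uniform labels for the edge-vertices and the $d$ twin copies out of a single $[0,1]$ label at each $v\in V(G)$; this works (split $\omega(v)$ into enough independent coordinates and use the neighbours' labels to break the symmetry among the $2d$ incident edges), and the resulting matching only needs to be $\Aut(G)$-equivariant rather than $\Aut(B)$-equivariant, but this should be spelled out. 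Your alternative ``iid orientation plus equivariant pairing and flipping of imbalance defects'' is a reasonable fallback, though it raises the same convergence issues that Lyons--Nazarov already solve, so the matching route is cleaner.
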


For example, all Cayley graphs, in particular regular trees are unimodular.
For $d>1$, the $2d$-regular tree $T_{2d}$ is also non-amenable, so it is covered by Theorem~\ref{thm:non-amenable_balanced_orientation}.

The structure of the paper is as follows. In Section~\ref{section:basics}, we give definitions, present examples for Theorem~\ref{thm:counterexample}, and treat invariance under quasi-isometry.
Section~\ref{section:clusters_and_hierarchy} is concerned with general planar lattices, breaking them into clusters organised in a hierarchy and obtaining balanced orientations. Building on some of these results, Section~\ref{section:archimedean} gives the proofs of Theorem~\ref{thm:archimedean_schreier} separately for each lattice and of Theorem~\ref{thm:Zd}.
In Section~\ref{section:perfmatch_and_properedgecolr}, we explore what other combinatorial structures we can obtain using the results and ideas presented thus far.
Open questions are collected in Section~\ref{section:open_questions}.

\medskip

\begin{acknowledgement}
The authors would like to thank Miklós Abért, Jan Grebík, Matthieu Joseph, Gábor Kun, Gábor Pete, and Václav Rozhoň for inspiring discussions about various parts of this work.
\end{acknowledgement}

\section{Notation and basics}\label{section:basics}

We call a graph $G$ \emph{transitive} if and only if it is vertex-transitive, i.e.~the automorphism group $\Aut(G)$ acts transitively on the vertex set $V(G)$. We call $G$ \emph{edge-transitive} whenever $\Aut(G)$ acts transitively on $E(G)$.

For the rest of the paper, for every $d\geq2$, let $\Lambda_\square^d$ stand for the $2d$-regular $d$-dimensional grid, that is the Cayley graph of $\mathbb{Z}^d$ with the standard generators after forgetting the labelling. In the planar case, we also use simply $\Lambda_\square$ in place of $\Lambda_\square^2$.

\subsection{Schreier graphs}

Given a finitely generated group $\Gamma= \langle S \rangle$ and an action $\Gamma \acts X$ on some set $X$, the \emph{Schreier graph} $\Sch(\Gamma \acts X,S)$ of the action is defined as follows. The set of vertices is $X$, and for every $x\in X$, $s \in S$, we introduce an oriented $s$-labelled edge from $x$ to $s.x$.

Rooted connected Schreier graphs of $\Gamma$ are in one-to-one correspondence with pointed transitive actions of $\Gamma$, which in turn are in one-to-one correspondence with subgroups of $\Gamma$.
Trivially, a graph with a Schreier decoration is a Schreier graph of the free group $F_d$ on $d$ generators.
A special case is the
(left) \emph{Cayley graph} of $\Gamma$, denoted $\Cay(\Gamma,S)$, which is the Schreier graph of the (left) translation action $\Gamma \acts \Gamma$.

\subsection{Factors of iid}
Let $\Gamma$ be a group. A \emph{$\Gamma$-space} is a measurable space $X$ with an action $\Gamma \acts X$. A map $\Phi: X \to Y$ between two $\Gamma$-spaces is a \emph{$\Gamma$-factor} if it is measurable and $\Gamma$-equivariant, that is $\gamma.\Phi(x)=\Phi(\gamma.x)$ for every $\gamma \in \Gamma$ and $x \in X$.

A measure $\mu$ on a $\Gamma$-space $X$ is \emph{invariant} if $\mu(\gamma.A)=\mu(A)$ for all $\gamma\in\Gamma$ and all measurable $A \subseteq X$. We say an action $\Gamma \acts (X,\mu)$ is \emph{probability-measure-preserving} (p.m.p.) if $\mu$ is a $\Gamma$-invariant probability measure. 

Let $G$ be a graph and $\Gamma \leq \Aut(G)$. Let ${\tt u}$ denote the Lebesgue measure on $[0,1]$. We endow the space $[0,1]^{V(G)}$ with the product measure ${\tt u}^{V(G)}$. The translation action $ \Gamma \acts [0,1]^{V(G)}$ is defined by \[(\gamma.f)(v) = f(\gamma^{-1}.v), \ \forall \gamma \in \Gamma, v\in V(G).\]
The action $\Gamma \acts ([0,1]^{V(G)}, {\tt u}^{V(G)})$ is p.m.p.

An orientation of $G$ can be thought of as a function on $E(G)$ sending every edge to one of its endpoints. Viewed like this, orientations of $G$ form a measurable function space ${\tt Or}(G)$ on which $\Gamma$ acts. The set ${\tt BalOr}(G) \subseteq {\tt Or}(G)$ of balanced orientations is $\Gamma$-invariant and measurable, so it is a $\Gamma$-space in itself. Similarly, the set of all Schreier decorations of $G$ forms the $\Gamma$-space ${\tt Sch}(G)$. 

\begin{defi}
A $\Gamma$-factor of iid Schreier decoration (respectively, balanced orientation) of a graph $G$ is a $\Gamma$-factor $\Phi: ([0,1]^{V(G)}, {\tt u}^{V(G)}) \to {\tt Sch}(G)$ (respectively, to ${\tt BalOr}(G)$). If the group $\Gamma$ is not specified, we mean an $\Aut(G)$-factor.
\end{defi}

\begin{remark}
We allow $\Phi$ to not be defined on a ${\tt u}^{V(G)}$-null subset $X_0 \subseteq [0,1]^{V(G)}$. 
\end{remark}

Let us now recall some special classes of iid processes on graphs. For a fixed vertex $x \in V(G)$, let $\big(\Phi(\omega)\big)(x)$ denote the restriction of $\Phi(\omega)$ to the edges incident to $x$. We say $\Phi$ is a \emph{finitary} factor of iid if for almost all $\omega\in [0,1]^{V(G)}$, there exists an $R \in \mathbb{N}$ such that $\big(\Phi(\omega)\big)(x)$ is already determined by $\omega|_{B_G(x, R)}$. That is, if we change $\omega$ outside $B_G(x, R)$, the decoration $\Phi(\omega)$ does not change around $x$. This radius $R$ can depend on the particular $\omega$. If it does not then we say $\Phi$ is a \textit{block factor}.

When constructing factors of iid algorithmically, one often makes use of the fact that a uniform $[0,1]$ random variable can be decomposed into countably many independent uniform $[0, 1]$ random variables. In practice, this means that we can assume that a vertex has multiple labels or that a new independent random label is always available after a previous one was used.

We will use a reverse operation as well: the
composition of countably many uniform $[0,1]$ random variables is again a uniform $[0,1]$ random variable. When the number of variables combined is finite, one can do this in a permutation-invariant way. In particular, this allows finite graphs to make joint random decisions as factors of iid by composing the labels of their vertices. So any $\Aut(G)$-invariant random process on a finite graph $G$ is a factor of iid. We frequently make use of this on finite subgraphs of our infinite graphs. 

For $\Aut(G)$-factors of iid on an infinite transitive graph $G$, the fact that the factor map intertwines the actions of $\Gamma$ implies that the further two vertices are from each other, the more independently the process behaves around them. We make use of the existence of this correlation decay both later in this section and in Section~\ref{section:open_questions}.

\subsection{Graphs quasi-isometric to $P$}

In this subsection, we present regular graphs of arbitrary even degree with the same large-scale geometry which in one case have factor-of-iid Schreier decorations but in the other not even a factor-of-iid balanced orientation.
In both cases, our examples are quasi-isometric to the bi-infinite path $P$, that is the graph with $V(P)=\{v_i \colon i\in\mathbb{Z}\}$ in which $v_i$ and $v_j$ are adjacent if and only if $|i-j|=1$.

For simple graphs $G_1$ and $G_2$, let the graph $G_1 \times G_2$ be defined by having $V(G_1\times G_2)=V(G_1) \times V(G_2)$ with vertices $(u,v)$ and $(u',v')$ being adjacent if and only if $u=u'$ and $vv' \in E(G_2)$ or $v=v'$ and $uu' \in E(G_1)$.

\begin{proposition}\label{prop:finite_times_Z}
Let $H$ be a finite $2d-2$-regular graph with an odd number of vertices. The $2d$-regular graph $H \times P$ has no $\Aut(H \times P)$-factor of iid balanced orientation. 
\end{proposition}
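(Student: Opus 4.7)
The plan is to find a parity obstruction on the ``vertical'' cuts of $H \times P$. Write $H_i := H \times \{v_i\}$ for each slice, and let $C_i$ denote the set of $|V(H)|$ edges between $H_i$ and $H_{i+1}$. For an orientation $\xi$ of $H \times P$, let $a_i(\xi)$ count the edges in $C_i$ oriented from $H_i$ towards $H_{i+1}$, so that the remaining $|V(H)| - a_i(\xi)$ point backward.

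First I would establish, for every (deterministic) balanced orientation of $H \times P$, that the integer $a_i$ is independent of $i$. Summing the local balance $\mathrm{outdeg}(v) = \mathrm{indeg}(v)$ over $v \in H_i$, the horizontal edges internal to $H_i$ cancel in pairs, leaving only the cut contributions:
\[a_i + (|V(H)| - a_{i-1}) \;=\; (|V(H)| - a_i) + a_{i-1},\]
so that $a_i = a_{i-1}$ for every $i$.

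Now suppose for contradiction that $\Phi$ is an $\Aut(H \times P)$-factor of iid balanced orientation. Each $a_i \circ \Phi$ is a measurable $\{0, 1, \ldots, |V(H)|\}$-valued random variable, and by the previous step they agree almost surely. The translation $\sigma : v_j \mapsto v_{j+1}$ of $P$ extends to an automorphism of $H \times P$; it induces the Bernoulli shift $\mathbb{Z} \acts ([0,1]^{V(H \times P)}, {\tt u}^{V(H \times P)})$, which is ergodic. Using the equivariance of $\Phi$ one checks that $a_i \circ \Phi \circ \sigma = a_{i-1} \circ \Phi = a_i \circ \Phi$ almost surely, so $a_i \circ \Phi$ is a.s.\ $\sigma$-invariant. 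By ergodicity it is a.s.\ equal to a single deterministic constant $A \in \{0, 1, \ldots, |V(H)|\}$.

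To close the argument I would invoke the reversal $r \in \Aut(H \times P)$ defined by $r(u, v_i) = (u, v_{-i})$, which sends each vertical edge to another vertical edge while swapping its endpoints. A short bookkeeping calculation shows $a_0 \circ \Phi \circ r = |V(H)| - a_{-1} \circ \Phi$ almost surely; since $r$ preserves ${\tt u}^{V(H \times P)}$, the two sides have the same distributions as $a_0 \circ \Phi$ and $a_{-1} \circ \Phi$ respectively, both of which are a.s.\ equal to the constant $A$. Therefore $A = |V(H)| - A$, i.e.\ $2A = |V(H)|$, contradicting the hypothesis that $|V(H)|$ is odd. The only non-trivial point in a formal write-up is the careful bookkeeping of how $\Aut(H \times P)$ acts simultaneously on orientations and on the iid label space; once that is set up, the deterministic cut identity, the ergodicity of the Bernoulli shift, and the reversal symmetry combine to force the parity contradiction.
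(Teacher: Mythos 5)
Your proof is correct and rests on the same central insight as the paper's: summing the balance condition over a slice $H_i$ shows that the forward cut count $a_i$ is a deterministic invariant of any balanced orientation, independent of $i$. Where you and the paper diverge is in converting this into a contradiction. The paper argues informally via correlation decay: two far-apart events $\{a_i > |V(H)|/2\}$ and $\{a_j < |V(H)|/2\}$ should become approximately independent in a factor of iid, yet the cut identity forces them to be perfectly (anti-)correlated. (Incidentally the paper's displayed probability $\tfrac{1}{2}$ looks like a typo for $\tfrac{1}{4}$; the argument survives either way, as what matters is that the limit is bounded away from $0$.) Your version replaces this approximate-independence reasoning with two crisp exact facts: ergodicity of the $\mathbb{Z}$-Bernoulli shift induced by $\sigma$ forces $a_i \circ \Phi$ to be an almost sure constant $A$, and the reversal automorphism $r \colon (u,v_i) \mapsto (u,v_{-i})$ yields the identity $A = |V(H)| - A$, which is incompatible with $|V(H)|$ odd. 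The gain is that no quantitative mixing estimate is needed and the role of the reflection symmetry is made explicit rather than being folded implicitly into the ``probability close to $\tfrac{1}{2}$'' statement; the trade-off is that you must invoke ergodicity of the shift and carefully track how $\Aut(H\times P)$ acts simultaneously on the label space and on orientations, which you flag as the main bookkeeping burden. Both proofs are sound; yours is arguably cleaner to make fully rigorous.
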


\begin{proof}[Proof of Theorem~\ref{thm:counterexample}]
In Proposition~\ref{prop:finite_times_Z}, take $H$ to be the $2d-2$-regular clique $K_{2d-1}$. As $K_{2d-1}$ is transitive, so is $K_{2d-1} \times P$.
\end{proof}

\begin{proof}[Proof of Proposition~\ref{prop:finite_times_Z}]
First we note that $P$ has no $\Aut(P)$-factor of iid balanced orientation. This is because $P$ has only two balanced orientations, and so orienting any single edge determines the whole balanced orientation. So two edges at an arbitrary distance get oriented in the same direction with probability $1$. In a factor of iid orientation however, given two edges that are far enough from each other, the probability of them being oriented in the opposite direction is close to $1/2$.

Second, suppose a balanced orientation of $H \times P$ is given. For adjacent vertices $v,v'\in V(P)$, define \[n(v, v') = |\{u \in V(H) \mid \big((u,v),(u,v')\big) \textrm{ is an oriented edge in } H \times P\}|,\]
and note that $n(v_i,v_{i+1})+n(v_{i+1},v_i)=|V(H)|$.

We also claim that $n(v_i,v_{i+1})=n(v_{i+1},v_{i+2})$ for every $i\in\mathbb{Z}$. Indeed, as the orientation of $H \times P$ is balanced, we have 
\begin{equation}\label{eqn:indegree_outdegree}
\sum_{u \in V(H)} \mathrm{indeg}\big( (u,v_{i+1}) \big) = \sum_{u \in V(H)} \mathrm{outdeg}\big( (u,v_{i+1}) \big).
\end{equation}
On one hand, oriented edges of $H \times P$ of the form $\big((u,v_{i+1}),(u',v_{i+1})\big)$ contribute 1 to both sides of (\ref{eqn:indegree_outdegree}). On the other hand, $n(v_{i},v_{i+1})$ counts the edges of the form $\big((u,v_{i}),(u,v_{i+1})\big)$ contributing to the left-hand side of (\ref{eqn:indegree_outdegree}), and $n(v_{i+1},v_{i+2})$ counts the edges of the form 
$\big((u,v_{i+1}),(u,v_{i+2})\big)$ contributing to the right-hand side. Therefore $n(v_{i},v_{i+1})=n(v_{i+1},v_{i+2})$ as claimed.

As $|V(H)|$ is odd, we either have $n(v_{i},v_{i+1})>n(v_{i+1},v_{i})$ or $n(v_{i},v_{i+1})<n(v_{i+1},v_{i})$. We now have an argument similar to the one we presented to prove there is no factor of iid balanced orientation of $P$.

Our claim shows that if we have $n(v_{i},v_{i+1})>n(v_{i+1},v_{i})$ for some $i$, then we have it for all $i$. That is, for $i, j \in \mathbb{Z}$, and \emph{any} random balanced orientation, we have
\[\mathbb{P} \left[ n(v_{i},v_{i+1})>n(v_{i+1},v_{i}) \textrm{ and } n(v_{j},v_{j+1})<n(v_{j+1},v_{j}) \right] = 0.\]

However, if the balanced orientation of $H \times P$ was a factor of iid, for any $\varepsilon >0$ we could find $i$ and $j$ far enough such that
\[\left|\mathbb{P} \left[ n(v_{i},v_{i+1})>n(v_{i+1},v_{i}) \textrm{ and } n(v_{j},v_{j+1})<n(v_{j+1},v_{j}) \right] - \frac{1}{2} \right| \leq  \varepsilon.\]
Consequently, $H \times P$ has no factor of iid balanced orientation. 
\end{proof}

These non-examples are all quasi-isometric to $P$, which renders them somewhat unsatisfactory (see Question~\ref{qtn:non_Z_non_example}). Being quasi-isometric to $P$, however, does not imply being a non-example.

\begin{proposition}\label{prop:HxP_whenH_hasPM}
Let $H$ be a finite $2d-2$-regular graph which has a perfect matching. Then the $2d$-regular graph $H \times P$ has an $\Aut(H\times P)$-factor of iid Schreier decoration.
\end{proposition}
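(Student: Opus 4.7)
The plan is to generalise the argument for bipartite $H$ outlined in the preceding commented-out proof, using the perfect matching $M$ of $H$ in place of the bipartition. For each edge $e = v_iv_{i+1} \in E(P)$, define two orientations $\mathrm{Or}_1(e)$ and $\mathrm{Or}_2(e)$ of the vertical edges in $H \times \{e\}$ as follows: in $\mathrm{Or}_1(e)$, for each matched pair $uu' \in M$, orient $(u,v_i) \to (u,v_{i+1})$ and $(u',v_{i+1}) \to (u',v_i)$; in $\mathrm{Or}_2(e)$ reverse all of these. For each $e \in E(P)$ independently, choose one of $\mathrm{Or}_1(e)$ and $\mathrm{Or}_2(e)$ uniformly at random; this is a finitary factor of iid. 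At each slice $v_i$, independently pick a random 2-factor $F_i$ of $H$ using the iid labels of the slice; such a factor exists by Petersen's theorem since $H$ is finite and $(2d-2)$-regular.

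Next, orient the cycles of $F_i$ according to the choices at the two adjacent $E(P)$-edges. If the two orientations agree, each vertex of $H \times \{v_i\}$ has balanced vertical contributions, and we randomly strongly orient each cycle of $F_i$; if they differ, the vertical imbalance at $(u,v_i)$ is $\pm 2$ with sign determined by which side of its matched pair $u$ lies on, and we uniquely orient each cycle of $F_i$ to compensate. Define alternating walks $W(e)$ analogously to the bipartite case by chaining outgoing/incoming edges of alternating type at each vertex. By the iid randomness of the orientation choices, the set $S = \{i : \mathrm{Or}(v_{i-1}v_i) = \mathrm{Or}(v_iv_{i+1})\}$ is almost surely an infinite subset of $\mathbb{Z}$ with neither least nor greatest element, so any walk $W(e)$ is confined to a finite subgraph $H \times P_{[i_S,(i+1)^S]}$ between two consecutive elements of $S$ and is therefore a finite cycle. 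Colouring each $W(e)$ with two alternating colours $c_1, c_2$ and orienting it at random yields the first two colours of the Schreier decoration; the remaining edges form disjoint finite $(2d-4)$-regular subgraphs of the form $(H \setminus F_i) \times \{v_i\}$, which we randomly Schreier-decorate with the remaining $d-2$ colours using the folklore fact that every finite even-regular graph admits a Schreier decoration.

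The hard part will be to ensure that each walk $W(e)$ is an \emph{even} cycle (so that the two-colouring step actually yields a valid Schreier decoration) and that the compensating orientation of $F_i$ in the ``disagreeing'' case is well-defined. In the bipartite case both of these were automatic because the bipartition of $H \times P$ forces cycles of $F_i$ and the walks $W(e)$ to alternate between the two parts. For general $H$ with a perfect matching this requires a more careful choice of $F_i$: one natural option is to insist that $F_i \supseteq M$ whenever possible, so that $F_i = M \cup M'_i$ for a perfect matching $M'_i$ of $H \setminus M$ and cycles of $F_i$ alternate between $M$-edges and $M'_i$-edges. Using this alternation together with the matching-based definition of $\mathrm{Or}_1, \mathrm{Or}_2$ should give the needed parity of $W(e)$ and the unique compensating orientation in the disagreeing case, completing the construction.
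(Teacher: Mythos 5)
Your approach generalises the commented-out bipartite argument, but there is a genuine gap in the ``disagreement'' case that your proposed fix does not close, and the paper's actual proof takes a different, cleaner route.

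The gap is in the compensating orientation of $F_i$. Fix a labelling that assigns one endpoint of each edge of $M$ to a side $A$ and the other to a side $B$; this is what makes $\mathrm{Or}_1,\mathrm{Or}_2$ well defined. When $\mathrm{Or}(v_{i-1}v_i)\neq\mathrm{Or}(v_iv_{i+1})$, the vertical edges at $(u,v_i)$ are both incoming if $u\in A$ and both outgoing if $u\in B$ (or vice versa). To rebalance, each cycle $C$ of $F_i$ must be oriented so that both $C$-edges at every $A$-vertex point out and both $C$-edges at every $B$-vertex point in. Going around $C$, this both-in/both-out status necessarily alternates, so such an orientation exists \emph{if and only if} $C$ alternates between $A$ and $B$. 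Requiring $F_i\supseteq M$ makes the cycles of $F_i$ alternate between $M$-edges and non-$M$-edges, hence be even, but it does \emph{not} make them alternate between $A$ and $B$: if $u_0u_1$ and $u_2u_3$ are consecutive $M$-edges of $C$, each straddles $A/B$, but nothing forces $u_1$ and $u_2$ to lie on opposite sides, and for non-bipartite $H$ there is in general no choice of the $A/B$ labelling that works simultaneously for all cycles of all the $F_i$. (As a secondary issue, a 2-factor of $H$ containing $M$ exists only if $H\setminus M$, which is $(2d-3)$-regular, has a perfect matching, and this can fail.) By contrast, the worry you flag about $W(e)$ being even is not actually a problem: in any balanced orientation of a $4$-regular graph, the alternating walks correspond to cycles in the $2$-regular bipartite auxiliary graph obtained by splitting each vertex into a head and a tail, so they are automatically even.

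The paper sidesteps all of this by abandoning the orient-verticals-then-compensate scheme. It chooses a factor-of-iid $4$-independent set $S\subset V(P)$, and for each $v\in S$ a random neighbour $v'$, places the fixed perfect matching $M$ in a few designated slices near $S\cup S'$, and fills the gaps between them with vertical edges. The result is a single $2$-factor of $H\times P$ whose cycles alternate between $M$-edges and vertical edges by construction, so they are automatically even; each is coloured $c_1$ and strongly oriented at random. Removing these edges leaves a $(2d-2)$-regular graph with finite components, each isomorphic to $H$ or to a component of $(H\setminus M)\times\{v_i,v_{i+1}\}$, and each component independently picks a random Schreier decoration in the remaining colours $c_2,\dots,c_d$. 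This avoids the compensating-orientation obstruction entirely.
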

\begin{proof} 

Let $S$ be a non-empty factor-of-iid 4-independent subset of $P$, that is a subset such that for every $v_i, v_j\in S$, the distance $d_P(v_i,v_j)$ is at least 5.
For each $v\in S$, let $v'$ be a neighbour of $v$ in $P$, chosen uniformly at random. Then let both $H\times\{v\}$ and $H\times\{v'\}$ fix the same perfect matching $M$. Together with the $|V(H)|$ edges between $H\times\{v\}$ and $H\times\{v'\}$, these form a 2-factor of $H\times\{v,v'\}$ (whose all cycles are even).

Next we are going to complement these 2-factors to obtain a 2-factor of the entire $H\times P$. Let $S'=\{v' : v\in S\}$, and suppose that $i\in\mathbb{Z}, j\in\mathbb{N}$ are such that $v_{i-1},v_{i+j+1}\in S\cup S'$ but $v_k\notin S\cup S'$ for any $k\in[i,i+j]$. We note that $j$ is finite and greater than zero because $S$ is a factor of iid and 4-independent, respectively.
Now let $H\times\{v_i\}$ fix the same perfect matching as $H\times\{v_{i-1}\}$ does and $H\times\{v_{i+j}\}$ the same as $H\times\{v_{i+j+1}\}$.
%Now pick a perfect matching in $H\times\{v_i\}$ and $H\times\{v_{i+j}\}$.
Together with the $j\cdot|V(H)|$ edges of the form $(u,v_{i+k-1})(u,v_{i+k}), k\in[j]$, these form a 2-factor of $H\times\{v_i,\dots v_{i+j}\}$ (whose all cycles are even).

Let us give colour $c_1$ to every edge in this 2-factor of $H\times P$ and orient each of its finite cycles strongly. Then after removing all the decorated edges, we are left with a $2d-2$-regular graph whose all connected components are finite; in particular, they are all isomorphic to either $H$ or a connected component of $(H\setminus M) \times \{v_i,v_{i+1}\}$.
To complete the construction, let each connected component pick at random a Schreier decoration with colours $c_2,\dots,c_{d}$.
\end{proof}

With a couple more technicalities in the proof, we could in fact tweak the Schreierisation so that all oriented cycles of colour $c_1$ would have length at most $3\cdot|V(H)|$ and all oriented cycles of the other colours at most $2\cdot|V(H)|$. This means that $H\times P$ is the Schreier graph of a factor-of-iid action of many more groups than just $F_d$.

Most $2d-2$-regular graphs with an even number of vertices do have a perfect matching. For example, if they are bipartite or contain a Hamiltonian path, a perfect matching must exist. However, there are instances for every $d\geq3$ which do not have a perfect matching despite having an even number of vertices \cite{perfectmatch_in4,perfectmatch_inregular}. For these, we do not know whether their product with $P$ admits a factor-of-iid Schreier decoration and/or a perfect matching; see also subsection~\ref{subsec:properedgeperfmatch} and Question~\ref{qtn:characterisation}.

Finally, taking $H=K_{2d-1}$ in Proposition~\ref{prop:finite_times_Z} and $H=K_{2d-2,2d-2}$ in Proposition~\ref{prop:HxP_whenH_hasPM} shows that the property of having a factor-of-iid Schreier decoration is not invariant under quasi-isometry not even when we restrict to transitive graphs of a given even degree.

\section{Finite clusters of arbitrary thickness and their hierarchy}
\label{section:clusters_and_hierarchy}

In this section, we will develop the main tools to investigate planar lattices. Our main goal is to break the symmetries of the lattice by breaking the lattice into pieces of finite size such that each of the pieces is ``wide'' enough and surrounded by another one -- such a partition will be called a hierarchy. We use percolation theory to obtain a starting hierarchy as a factor of iid process. We also prove that given any starting hierarchy, one can coarsen it in a factor of iid way so that the pieces of the resulting hierarchy have arbitrarily large width.
As an application, we show how to obtain a factor of iid balanced orientation of any planar lattice with $m$-fold symmetry and all degrees even.

Throughout this and the following section, for any graph $G$, let $\mu_G$ denote the
usual product measure on $[0,1]^{V(G)}$, with each coordinate getting the Lebesgue measure.

\subsection{General cluster hierarchy and its thickness}

To obtain Schreier decorations of three of the Archimedean lattices, we partition their vertex set $V$ into finite parts, which we shall call clusters, such that for each cluster $C$, there is a unique cluster $C^+$ surrounding it. That is, a unique part $C^+$ such that $C$ is in its convex hull and such that there are adjacent vertices $u\in C$, $v\in C^+$. Such a partition of $V$ can be described by a one-ended infinite tree $T$ whose vertices are the finite clusters; $B, C\in V(T)$ are adjacent if and only if one surrounds the other. Moreover, every vertex $C\in V(T)$ has a well-defined parent $C^+$ distinguishable from $C$'s children, should there be any.
\begin{defi}[Hierarchy]
Let $G$ be a graph and $\mathbf{H}$ a partition of $V(G)$. We say that two distinct parts $C, D \in \mathbf{H}$ are adjacent if and only if there is $u\in C$ and $v\in D$ which are adjacent in $G$. Then $\mathbf{H}$ is a \emph{hierarchy on $G$} if the following holds for every $C\in \mathbf{H}$.\\
1) $C$ is finite,\\
2) there is a \emph{unique} $C^+\in \mathbf{H}$ such that $C$ and $C^+$ are adjacent and for all $v\in V(G)$ but finitely many, any path from $C$ to $v$ contains a vertex from $C^+$,\\
3) whenever $B\in \mathbf{H}$ is adjacent to $C$, either $B=C^+$ or $C=B^+$.
\end{defi}

Note that it is not necessarily the case that the subgraph of $G$ induced by a cluster $C$ is connected.

A key feature of the hierarchies we use later is that any two non-adjacent clusters are far from one another. Let us describe how starting with any hierarchy, we can obtain one in which non-adjacent clusters are as far from one another as we wish.

\begin{defi}[$k$-spaced hierarchy]
Let $G$ be a graph and $k$ a natural number. A hierarchy $\mathbf{H}$ on $G$ is \emph{$k$-spaced} if for all non-adjacent $B,C\in \mathbf{H}$, the graph distance $d(B,C)=\min_{u\in B, v\in C} d_G(u,v)$ is at least $k$.
\end{defi}

\begin{proposition}\label{prop:hierarchy}
Let $G$ be a graph and suppose there is a (finitary) $\Aut(G)$-factor of iid hierarchy $\mathbf{H}$ on $G$. Then $\forall k\in\mathbb{N}$, there is a (finitary) $\Aut(G)$-factor of iid $k$-spaced hierarchy $\mathbf{H}_k$ on $G$.

Moreover, $\forall c,k\in\mathbb{N}$, there is a (finitary) $\Aut(G)$-factor of iid pair $(J_{c,k}$,$\eta: J_{c,k}\to[c])$ where $J_{c,k}$ is a $k$-spaced hierarchy and $\eta: J_{c,k}\to[c]$ is a colouring of the parts with $c$ colours such that $\forall C\in J_{c,k}$, if $C$ has colour $i$ then $C^+$ has colour $i+1\pmod{c}$. 
\end{proposition}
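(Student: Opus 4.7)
My plan is to construct $\mathbf{H}_k$ by iterating a factor-of-iid coarsening of the given hierarchy, and then to produce the coloring $\eta$ by a distance-mod-$c$ scheme.

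For the first assertion, I would start by describing a single coarsening step. Given any factor-of-iid hierarchy $\mathbf{H}'$ on $G$ with cluster tree $T'$, extract from the vertex labels in $G$ a fresh iid $[0,1]$-variable $U_C$ at each cluster $C \in \mathbf{H}'$, and call $C$ a \emph{root} whenever $U_C < p$ for some fixed $p\in(0,1)$. The coarsened clusters are indexed by roots: for a root $R$, the new cluster at $R$ is the union (as a subset of $V(G)$) of all $D\in\mathbf{H}'$ whose first root-ancestor in $T'$ is $R$. Because $T'$ is locally finite and one-ended, every cluster has only finitely many descendants in $T'$, and because each ancestor chain meets a root in geometric time, every new cluster is almost surely finite. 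The partition inherits a unique parent relation from $T'$ (the parent of the new cluster at $R$ is the new cluster containing $R^+$), and conditions 1--3 are readily checked, so the coarsening is again a factor-of-iid hierarchy.

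Starting from $\mathbf{H}_0 := \mathbf{H}$ I would iterate this coarsening to produce $\mathbf{H}_1,\mathbf{H}_2,\dots$ and set $\mathbf{H}_k := \mathbf{H}_N$ for $N=N(k)$ large enough. The key claim to verify is that for such $N$ the hierarchy $\mathbf{H}_N$ is $k$-spaced almost surely. The intuition is that condition 2 of the hierarchy forces each parent cluster to separate its child from all but finitely many vertices of $G$, so merging many consecutive ancestral layers into one new cluster adds nested ``separating shells'' around its interior, which any $G$-path leaving a non-adjacent cluster must cross.

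For the moreover part I would take $J_{c,k} := \mathbf{H}_k$ and define $\eta$ using yet another layer of fresh iid randomness. Mark each $C \in J_{c,k}$ independently as an \emph{anchor} with probability $\tfrac12$, and let $d(C)$ be the distance in the cluster tree of $J_{c,k}$ from $C$ up to the nearest anchor ancestor; this is finite almost surely. Set $\eta(C) := -d(C) \bmod c$. If $C^+$ is an anchor then $d(C^+)=0$ and $d(C)=1$, whereas otherwise $d(C^+)=d(C)-1$; in either case $d(C^+) \equiv d(C)-1 \pmod c$, so $\eta(C^+) \equiv \eta(C)+1 \pmod c$ as required. Finitariness of $\eta$ is clear since $d(C)$ depends only on a finite (random) portion of the cluster tree above $C$.

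The hard part will be making the $k$-spacing claim in the first assertion rigorous. A direct path-counting argument only yields $2$-spacing after one round of coarsening, because two non-adjacent new clusters need only be separated by one intermediate cluster, contributing a single edge to any connecting $G$-path. Pushing this to $k$-spacing requires quantifying how the separating layers guaranteed by condition 2 accumulate under iterated merging, so that every $G$-path between non-adjacent final clusters is forced to traverse at least $k$ successive shells coming from some $\mathbf{H}_i$. A cleaner alternative I would consider, if the iterative bookkeeping proves too painful, is to bypass iteration altogether by using $\mathbf{H}$ only to supply a sparsified, factor-of-iid set of ``centers'', and defining $\mathbf{H}_k$ directly as a hierarchy of $k$-thickened regions around those centers, with the hierarchy structure inherited from $\mathbf{H}$.
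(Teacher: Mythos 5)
Your coarsening-and-iterate strategy cannot reach $k$-spacing for $k \geq 3$ on its own, because siblings never separate under coarsening: if $B$ and $C$ are distinct children of the same parent $A$ in any coarsened hierarchy, a $G$-path $B \to A \to C$ of length $2$ may still exist, and further rounds increase $d(C,C^{++})$ but do nothing for $d(B,C)$ when $B^+ = C^+$. The paper handles this with a separate geometric step you omit: after iterating its contraction to guarantee $d(C,C^{++}) \geq 2^m+1$, it splits each cluster $C$ into $C_{\rm inner}$ (vertices at $G$-distance $\geq 2^{m-1}$ from $C^+$) and $C_{\rm outer}$, and re-forms the clusters as $B_{\rm inner}\cup\bigcup_{C^+=B}C_{\rm outer}$; siblings of the resulting shells are now separated by the thick inner core of their common parent, giving the $2^{m-1}$-spacing. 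Your ``thickened regions around centers'' alternative gestures at this, but without the explicit reassignment the spacing claim does not follow. Separately, your root-based merge rule lacks the asymmetry the paper's green/yellow rule gives (cut a tree edge $z\,z^+$ exactly when $z$ is green and $z^+$ is yellow, so the yellow top of a cut cannot also be cut from its own parent): with roots, if $R$ and $R^+$ are both roots, then $R^+$ is the sole contact point between the new cluster's parent and grandparent, and one round only gives $d(C,C^{++}) \geq 2$ where the paper's rule guarantees $\geq 3$ and then bootstraps exponentially.

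The coloring scheme is also incorrect. You need $d(C^+) \equiv d(C)-1 \pmod c$ for \emph{every} $C$, but distance-to-nearest-anchor resets at anchors: if $C$ itself is an anchor then $d(C)=0$ while $d(C^+)$ is whatever the anchor gaps above dictate, so $d(C^+)\equiv -1$ fails; if you instead count to the nearest \emph{strict} anchor ancestor, then when $C^+$ is an anchor, $d(C)=1$ while $d(C^+)$ jumps to the gap above $C^+$, which need not be $\equiv 0 \pmod c$. Either way the required monotone decrease breaks at anchors and no amount of tweaking a random anchor set will repair it, since the obstruction is that a potential for the constant-$1$ cocycle on the cluster tree has a global $\mathbb{Z}/c$ ambiguity that tree-intrinsic randomness cannot break. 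The paper resolves it geometrically, not combinatorially on the tree: starting from a $ck$-spaced hierarchy, it subdivides each cluster $C$ into $c$ concentric shells $C_1,\dots,C_c$ by $G$-distance to $C^+$ and colors shell $C_i$ by $c+1-i$. The $ck$-spacing forces the outermost shell of $C$ to be adjacent (in the shell hierarchy) to the innermost shell of $C^+$, and consecutive shells differ by exactly $1$ in color, so the parent-increments-by-one property holds identically.
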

\begin{proof}
Consider the infinite tree $T_\mathbf{H}$ whose vertices are the parts of $\mathbf{H}$, i.e.~the clusters of the hierarchy.
Let us now colour uniformly randomly the vertices of $T_\mathbf{H}$ green and yellow, independently of each other (in other words, we carry out a site percolation with $p=\frac{1}{2}$). For any vertex $z\in V(T_\mathbf{H})$, write $z^+$ for the parent of $z$ and $z^{n+}\vcentcolon=(z^{(n-1)+})^+$ whenever $n\geq2$. Then with probability 1, $\forall z\in V(T_\mathbf{H})$ $\exists n$ such that $z^{n+}$ has the opposite colour than $z$.
Together with the fact that every vertex $x\in V(T_\mathbf{H})$ is an ancestor of only finitely many other vertices (i.e.~$| \{y\in V(T_\mathbf{H}) : \exists n \text{ such that } x=y^{n+} \} |<\infty$), this means that with probability 1, all monochromatic connected components of the coloured $T_\mathbf{H}$ are finite.

For every $z\in V(T_\mathbf{H})$, let us now merge it with $z^+$ if and only if $z$ and $z^+$ have got the same colour or $z$ is yellow and $z^+$ is green. The modification this makes on the tree $T_\mathbf{H}$ is that of contracting all edges whose two endpoints have the same colour and those edges $zz^+$ for which $z$ is yellow and $z^+$ is green.
This corresponds to
coarsening $\mathbf{H}$ into a new
hierarchy $\mathbf{H}^\wr$ which has got the property that for each cluster $C$, the distance $d(C,C^{++})=\min_{u\in C, v\in C^{++}}d_G(u,v)$ is at least 3.
This is because every path from $C$ to $C^{++}$ must go through $C^+$, but $C^+\in \mathbf{H}^\wr$ consists of at least two clusters of the original hierarchy $\mathbf{H}$.
More precisely, $C$ is the finite union of some former clusters $A_1,\dots,A_m\in \mathbf{H}$, $C^+$ of $B_1,\dots,B_n$ and $C^{++}$ of $D_1,\dots,D_l$. $C^+$ surrounds $C$, so there must be unique $i\in[m], j\in[n]$ such that $A_i^+=B_j$. $C$ and $C^+$ being distinct clusters in $\mathbf{H}^\wr$ implies that the vertex $A_i\in V(T_\mathbf{H})$ was coloured green and $B_j\in V(T_\mathbf{H})$ yellow. But then $B_j$ cannot be the part of the union $\cup_r^nB_r$ through which $C^+$ neighbours $C^{++}$, that is $B_j^+\in\{B_1,\dots,B_n\}$, and so $d(C,C^{++})=d(A_i,C^{++})\geq d(A_i,B_j^{++})=d(A_i,A_i^{+++})\geq 3$.

If we repeat such contraction of the hierarchy tree $m$ times to obtain $\mathbf{H}^{m\wr}$, then $d(C,C^{++})\geq2^m+1$ for every $C\in \mathbf{H}^{m\wr}$.
However, for $B, C\in \mathbf{H}^{m\wr}$ such that $B^+=C^+$, we can still have $d(B,C)=2$. To fix this, let us divide every $C\in \mathbf{H}^{m\wr}$ to

\begin{align*}
C_{\rm outer}&=\{v\in C : d(v,C^+)=\min_{u\in C^+} d_G(v,u)<2^{m-1}\}\\
\text{and }
C_{\rm inner}&=\{v\in C : d(v,C^+)=\min_{u\in C^+} d_G(v,u)\geq2^{m-1}\},
\end{align*}
and define a hierarchy $\mathbf{H}_{2^{m-1}}=\{B_{\rm inner}\cup \bigcup_{B=C^+}C_{\rm outer} : B\in \mathbf{H}^{m\wr}\}$.
Then for every $B\in \mathbf{H}^{m\wr}$ such that $B_{\rm inner}\neq\emptyset$,
\begin{align*}
   d&\Big(B_{\rm inner}\cup \bigcup_{B=C^+}C_{\rm outer},\big(B_{\rm inner}\cup \bigcup_{B=C^+}C_{\rm outer}\big)^{++}\Big)\\
    =d&\Big(B_{\rm inner}\cup \bigcup_{B=C^+}C_{\rm outer},B^{++}_{\rm inner}\cup \bigcup_{B^{++}=C^+}C_{\rm outer}\Big)\\
    =d&\Big(B_{\rm inner},\bigcup_{B^{++}=C^+}C_{\rm outer}\Big)
    =d(B_{\rm inner},B^+_{\rm outer})
    \geq d(B_{\rm inner}, B^+)=2^{m-1}.
\end{align*}
Two clusters in $\mathbf{H}_{2^{m-1}}$ have got the same parent if and only if they are of the form $B_{\rm inner}\cup \bigcup_{B=C^+}C_{\rm outer}$ and $D_{\rm inner}\cup \bigcup_{D=C^+}C_{\rm outer}$ for some $B,D\in \mathbf{H}^{m\wr}$ with $B^+=D^+$. For such a pair, we observe that
\begin{align*}
    d\Big(B_{\rm inner}\cup \bigcup_{B=C^+}C_{\rm outer},D_{\rm inner}\cup \bigcup_{D=C^+}C_{\rm outer}\Big)
    &=d(B_{\rm inner},D_{\rm inner})\\
    & \geq d(B_{\rm inner}, B^+)+d(D^+,D_{\rm inner})=2\cdot2^{m-1}.
\end{align*}

Having obtained $\mathbf{H}_k$ with $d(B,C)\geq k$ whenever $B, C$ are non-adjacent for every $k\in\mathbb{N}$, we want to find coloured hierarchies $J_{c,k}$. For fixed $c, k\in\mathbb{N}$, start by considering a hierarchy $\mathbf{H}_{ck}$. Then divide every $C\in \mathbf{H}_{ck}$ to parts $C_1,\dots,C_c$ where
\begin{align*}
    C_i&=\{v\in C : d(v,C^+)\in((i-1)k,ik]\} \text{ for every } i\in[c-1]\\
    \text{and } C_c&=\{v\in C : d(v,C^+)>(c-1)k\}.
\end{align*}
Colouring $C_i$ with colour $c+1-i$ yields the desired pair $(J_{c,k},\eta: J_{c,k}\to[c])$.
\end{proof}

\begin{figure}
    \centering\resizebox{0.8\linewidth}{!}{
    \begin{tikzpicture}   
    \node (anchor) at (0,0) {\includegraphics[width=12cm]{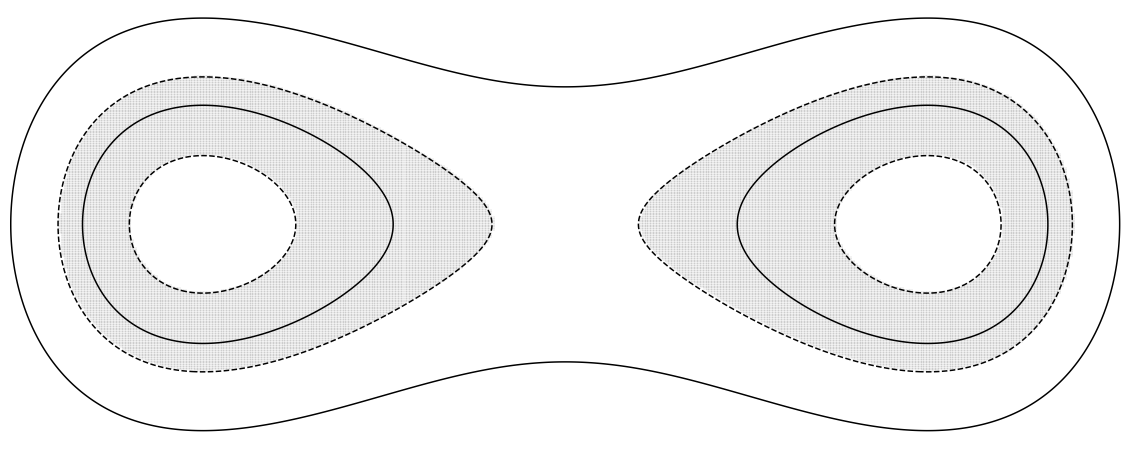}};
    % \draw[grid] (-6,-3) grid (6,3);
    \node (Ci) at (0,-1) {$C_\textrm{outer}$};
    \node (Co) at (0,1) {$C_\textrm{inner}$};
    \draw[->] (Co) edge (-1.5,0.5);
    \draw[->] (Co) edge (1.5,0.5);
    \node (Bi) at (-3.75,0) {$B_\textrm{inner}$};
    \node (Bi2) at (3.75,0) {$B'_\textrm{inner}$};
    \node (Bo) at (-5,-2.5) {$B_\textrm{outer}$};
    \node (Bo2) at (5,-2.5) {$B'_\textrm{outer}$};
    \draw[->] (Bo) edge (-4,-1);
    \draw[->] (Bo2) edge (4,-1);
    \end{tikzpicture}}
    \caption{Splitting clusters $C$, $B$, and $B'$ into their inner and outer parts. The shaded region shows $C_\textrm{inner}\cup \bigcup_{B^+=C}B_\textrm{outer}$.}
    \label{fig:cluster_eating}
\end{figure}
\begin{remark}
The Proposition above can be generalised to cases in which siblings are allowed to communicate, i.e.~in which two clusters $B\neq C$ with $B^+=C^+$ might be neighbours.
This corresponds to modifying the third point in the definition of hierarchy or leaving it out altogether. Under these circumstances, the infinite graph describing the hierarchy is no longer a tree, but a tree-like (or even forest-like) structure with triangles allowed.
\end{remark}

\subsection{Percolation clusters in planar lattices and their matching lattices}\label{subsec:percolationplanar}

Let $\Lambda$ be a \emph{planar lattice}, that is a connected, locally finite plane graph, with $V(\Lambda)$ a discrete subset of $\mathbb{R}^2$, such that there are translations $T_{v_1}$ and $T_{v_2}$ through two independent vectors $v_1$ and $v_2$ both of which act on $\Lambda$ as a graph isomorphism \cite{BollRior}.
Note that any planar lattice $\Lambda$ is necessarily quasi-transitive. We wish to use site percolation in a way that would partition $V(\Lambda)$ into finite clusters with a hierarchy. For lattices satisfying $\theta^s\left(\frac{1}{2}\right)=0$, i.e.~on which site percolation does not occur when $p=\frac{1}{2}$, we could colour the vertices $V(\Lambda)$ uniformly at random yellow and green and consider the monochromatic connected components. However, though this produces, with probability 1, only finite clusters, there isn't necessarily a clear hierarchy associated to them; this inconvenience is best illustrated by the random vertex 2-colouring of the square lattice. Nevertheless, when the lattice possesses some mild symmetry properties, a two-step solution presents itself. The first step is a result on matching pairs with $m$-fold symmetry.
\begin{defi}[$m$-fold symmetry, \cite{matchingpairs}]
For $m\geq2$, a plane lattice $\Lambda$ has \emph{$m$-fold symmetry} if the rotation about the origin through an angle of $2\pi/m$ maps the plane graph $\Lambda$ into itself.
\end{defi}
Based on ideas of Zhang, Bollobás and Riordan prove the following \cite{BollRior,matchingpairs}.
\begin{theorem}\label{thm:matchlattice}
Let $\Lambda$ be a plane lattice with $m$-fold symmetry for some $m\geq2$ and $\Lambda^\times$ its matching lattice, i.e.~the graph obtained from $\Lambda$ by adding all diagonals to all faces of $\Lambda$. Then for every $p\in[0,1]$, the percolation probabilities $\theta_\Lambda^s(p)$, $\theta_{\Lambda^\times}^s(p)$ satisfy that $\theta_\Lambda^s(p)=0$ or $\theta_{\Lambda^\times}^s(p)=0$. Furthermore, $p_H^s(\Lambda)+p_H^s(\Lambda^\times)=1$, where $p_H^s$ is the Hammersley critical probability.
\end{theorem}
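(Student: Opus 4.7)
The plan is to split the theorem into two parts: a planar exclusion statement, which yields the inequality $p_H^s(\Lambda) + p_H^s(\Lambda^\times) \geq 1$ essentially for free, and a Zhang-type symmetry argument to supply the reverse inequality $\leq 1$. This is the classical two-step strategy for proving matching pair identities, and the role of the $m$-fold symmetry is confined to the second step.

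For the exclusion, the key observation is that in any site configuration $\omega \in \{0,1\}^{V(\Lambda)}$, a doubly infinite self-avoiding $\Lambda$-path through open sites forbids a doubly infinite self-avoiding $\Lambda^\times$-path through closed sites (with the second $\theta$ interpreted in the matching convention that swaps the roles of ``open'' and ``closed''). The $\Lambda$-path separates the plane into two unbounded components by a Jordan-type lemma for plane lattices, and the $\Lambda^\times$-path would have to meet it at a common vertex, either directly or at the intersection of a $\Lambda$-edge with a face diagonal of $\Lambda^\times$; either case forces a vertex to be simultaneously open and closed, a contradiction. Combining this pathwise statement with the Burton--Keane uniqueness theorem promotes the exclusion to the almost sure cluster-wise statement $\theta_\Lambda^s(p) \cdot \theta_{\Lambda^\times}^s(p) = 0$ for every $p$, and as a corollary $p_H^s(\Lambda) + p_H^s(\Lambda^\times) \geq 1$.

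For the reverse inequality, I would adapt Zhang's argument to the $m$-fold symmetric setting. Suppose for contradiction that there exists $p$ with both $\theta_\Lambda^s(p) > 0$ and $\theta_{\Lambda^\times}^s(p) > 0$; then at parameter $p$, by ergodicity, a unique infinite open $\Lambda$-cluster and a unique infinite closed $\Lambda^\times$-cluster coexist almost surely. Fix a regular $m$-gon $R_n$ of side length $n$ centred at the origin with sides aligned with the rotational symmetry of $\Lambda$. For $i \in \{1,\dots,m\}$, let $A_i$ be the event that the $i$-th side of $R_n$ meets a vertex connected to infinity by an open $\Lambda$-path lying outside $R_n$. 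The $m$-fold symmetry makes $\mathbb{P}(A_i)$ independent of $i$, and $\mathbb{P}(\bigcup_i A_i) \to 1$ as $n \to \infty$ by the existence of the infinite open cluster. The FKG square-root trick then gives $\mathbb{P}(A_i) \geq 1 - (1 - \mathbb{P}(\bigcup_i A_i))^{1/m}$, hence $\mathbb{P}(\bigcap_i A_i) \to 1$. The same reasoning applied to the closed $\Lambda^\times$-cluster produces analogous events $A_i^\times$ with $\mathbb{P}(\bigcap_i A_i^\times) \to 1$. For $n$ large, both intersections hold simultaneously with positive probability, so both clusters send ``arms'' out of $R_n$ through all $m$ sides to infinity. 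The arms of the $\Lambda$-cluster divide the exterior of $R_n$ into $m$ unbounded regions, and the arms of the $\Lambda^\times$-cluster cannot reach distinct regions without crossing the $\Lambda$-arms, contradicting the exclusion from the previous step.

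The main obstacle is the Zhang step: executing the FKG square-root trick and the planar crossing contradiction uniformly for all $m \geq 2$ rather than just the $m=4$ square-lattice case. Specifically, one must align $R_n$ carefully with the rotational symmetry of $\Lambda$, verify monotonicity of the $A_i$ (and $A_i^\times$) in the correct direction for FKG to apply, and control how the infinite arms behave near $\partial R_n$ so that the final topological contradiction becomes genuinely rigorous; this is what Bollob\'as and Riordan work out in detail following Zhang's original idea for $\mathbb{Z}^2$.
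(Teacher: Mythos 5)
The paper does not actually prove this theorem; it is stated as a cited result of Bollob\'as and Riordan (``Based on ideas of Zhang, Bollob\'as and Riordan prove the following''), so there is no internal proof to compare against. Judged on its own terms, your proposal has two genuine gaps.

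The exclusion step as you state it is incorrect. A doubly infinite self-avoiding open $\Lambda$-path and a doubly infinite self-avoiding closed $\Lambda^\times$-path can coexist without sharing a vertex: the Jordan-type lemma says the open path splits the plane into two unbounded regions, but the closed path can run entirely inside one of them. The contradiction you invoke does not arise, and Burton--Keane uniqueness does not repair it, since two unique infinite clusters can be disjoint and non-crossing. Indeed, the very reason the theorem needs $m$-fold symmetry is that the exclusion is \emph{not} a free topological fact; it is obtained by exactly the Zhang-type arm argument you describe afterwards---the square-root trick and the symmetry force both clusters to emit arms through all $m$ sides of a large symmetric polygon, and only then is a topological crossing unavoidable.

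Second, you have the two inequalities assigned to the wrong steps. Your ``Zhang step'' assumes $\theta_\Lambda^s(p)>0$ and (in the matching, open-vs-closed convention) $\theta_{\Lambda^\times}^s(p)>0$ and derives a contradiction---but that \emph{is} the exclusion statement, i.e.\ it gives $p_H^s(\Lambda)+p_H^s(\Lambda^\times)\geq 1$, the same direction as your first step. The reverse inequality $p_H^s(\Lambda)+p_H^s(\Lambda^\times)\leq 1$ is not addressed anywhere in your outline, and it is the substantial direction in Bollob\'as--Riordan's proof: one needs the finite crossing duality for matching pairs, an RSW-type argument adapted to $m$-fold symmetric regions to keep crossing probabilities bounded away from $0$ and $1$, and a Kesten-style sharp-threshold or finite-size criterion to turn those crossing estimates into percolation. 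As written, your proposal proves at most the first sentence of the theorem and one of the two inequalities in the second.
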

This result tells us that choosing to randomly 2-colour the vertices of our planar lattice $\Lambda$ such that a site is yellow with probability $p_H^s(\Lambda)$ and green with probability $p_H^s(\Lambda^\times)$ is well defined. The definition of a cluster hierarchy on $\Lambda$ then comes naturally, with yellow clusters being exactly the yellow connected components of the random 2-colouring and green clusters the would-be green connected components on $\Lambda^\times$ (that is in $\Lambda$, the green clusters are unions of connected components which have vertices appearing in a same face).
The trouble is that the theorem does not guarantee that \emph{neither} of the colours will have an infinite cluster, or in other words that percolation does not occur at criticality at \emph{neither} the planar lattice nor its matching lattice. Benjamini and Schramm conjectured in 1996 that this indeed \emph{is} the case \cite{beyond}. However, for now, to obtain this crucial property, one needs to analyse lattices one by one (e.g. Russo showed that both critical percolations die for the square lattice $\Lambda_\square$ and its matching lattice $\Lambda_\boxtimes$ \cite{Russo}).

The second step in our solution to the hierarchy problem is therefore to add a vertex to every non-triangular face of a lattice $\Lambda$ with $m$-fold symmetry and connect it to all the vertices of that face. Let us call the resulting lattice $\Lambda^\bullet$ and observe that it also has $m$-fold symmetry. $\Lambda^\bullet$ is self-matching, and so Theorem~\ref{thm:matchlattice} tells us that $p^s_H(\Lambda^\bullet)=\frac{1}{2}$ and percolation does not occur at criticality.

This finding gives us a factor-of-iid hierarchy on $\Lambda$ as follows. Colour the vertices of $\Lambda$ yellow or green uniformly at random. For each non-triangular face, decide uniformly at random whether either all of its yellow vertices will be treated as if they were connected through the face or all of its green vertices will be so treated. This results, with probability one, in a well-defined cluster hierarchy on $\Lambda$, which we can combine with Proposition~\ref{prop:hierarchy} to conclude the following.

\begin{theorem}\label{thm:latticehierarchy}
Let $\Lambda$ be a planar lattice with $m$-fold symmetry, $m\geq2$. Then for every $k\in\mathbb{N}$, there is a finitary $\Aut(\Lambda)$-factor of $\left([0,1]^{V(\Lambda)},\mu_\Lambda\right)$ which is almost surely a $k$-spaced hierarchy on $\Lambda$.
\end{theorem}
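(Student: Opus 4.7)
The plan is to put together the two pieces already set up immediately before the statement: the self-matching auxiliary lattice $\Lambda^\bullet$, on which critical site percolation has no infinite cluster, and the coarsening/spacing machinery of Proposition~\ref{prop:hierarchy}. First I would produce \emph{some} factor-of-iid hierarchy on $\Lambda$, not yet caring about spacing, and then apply Proposition~\ref{prop:hierarchy} to inflate it to a $k$-spaced one.

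To obtain the initial hierarchy, I would use the iid $[0,1]$-labels on $V(\Lambda)$ in two ways. From the first coordinate of each label I extract an independent uniform yellow/green colour on $V(\Lambda)$; this is a $\Bernoulli(\tfrac12)$ site percolation on $\Lambda^\bullet$ after we also specify, for every non-triangular face $F$ of $\Lambda$, whether its yellow vertices or its green vertices get ``connected through'' the interior vertex of $F$ in $\Lambda^\bullet$. That choice per face must be produced as an $\Aut(\Lambda)$-equivariant factor of iid; I would do this by composing the second coordinates of the labels of the vertices of $F$ into a single uniform $[0,1]$ variable and reading off one bit. Since $\Lambda^\bullet$ has $m$-fold symmetry and is self-matching, Theorem~\ref{thm:matchlattice} gives $p_H^s(\Lambda^\bullet)=\tfrac12$ and no infinite cluster at criticality, so almost surely every monochromatic cluster is finite. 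Declaring the yellow clusters to be the yellow connected components of $\Lambda$, and the green clusters to be the unions of green connected components of $\Lambda$ that get merged through the chosen non-triangular faces, gives a partition $\mathbf{H}$ of $V(\Lambda)$ into finite parts.

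Next I would check that $\mathbf{H}$ is a hierarchy. Finiteness of parts is automatic. For the unique surrounding parent, I would use a planar Jordan-type argument: a finite cluster $C$ of one colour is enclosed by a closed curve in the plane, and by maximality together with the alternation between the two colours through non-triangular faces, the outer boundary of $C$ meets exactly one cluster $C^+$ of the opposite colour; all but finitely many vertices lie outside this enclosing curve, so any path from $C$ to any such vertex must pass through $C^+$. The third hierarchy axiom (any neighbouring cluster is either $C^+$ or has $C$ as its parent) follows from the same boundary analysis, since a neighbour of $C$ sits on the boundary of $C$ and hence either lies in $C^+$ or has $C$ as its own enclosing parent. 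This gives a (finitary) $\Aut(\Lambda)$-factor of iid hierarchy on $\Lambda$.

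Finally, feeding $\mathbf{H}$ into Proposition~\ref{prop:hierarchy} with the additional iid randomness from the remaining coordinates of the labels yields, for every $k\in\mathbb{N}$, a finitary $\Aut(\Lambda)$-factor of iid $k$-spaced hierarchy $\mathbf{H}_k$. The subtlety I would pay most attention to is the Jordan-curve step establishing existence and uniqueness of the surrounding cluster $C^+$; in particular I would verify that the ``merging through non-triangular faces'' rule is precisely what rules out the pathology (as with a $\Bernoulli(\tfrac12)$ colouring of $\Lambda_\square$ alone) where two opposite-colour clusters touch each other along a face without one enclosing the other. The measurability and $\Aut(\Lambda)$-equivariance are routine once the construction is described locally on the labels of $\Lambda$.
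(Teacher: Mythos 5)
Your proposal is correct and follows essentially the same route as the paper: colour $V(\Lambda)$ uniformly yellow/green, use per-face random bits (extracted equivariantly from the labels of the face's vertices) to decide which colour gets connected through each non-triangular face so that the colouring becomes critical Bernoulli site percolation on the self-matching lattice $\Lambda^\bullet$, invoke Theorem~\ref{thm:matchlattice} to get almost-sure finiteness of all clusters, read off the resulting hierarchy, and feed it into Proposition~\ref{prop:hierarchy} for the $k$-spacing. The paper states the intermediate hierarchy claim with less detail than you supply, but the construction and the two cited ingredients are identical.
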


Theorem~\ref{thm:latticehierarchy} now allows us to obtain a finitary factor of iid balanced orientation of any planar lattice with $m$-fold symmetry in which all degrees are even. To carry out the construction, we first need to clearly demarcate the clusters.

\begin{defi}[Boundary $\partial B$]
Let $\Lambda$ be a planar lattice and $\mathbf{H}$ a hierarchy on $\Lambda$. The \emph{boundary $\partial B$} of a cluster $B\in \mathbf{H}$ is a set of edges with both endpoints in $B$ as follows.
An edge $uv\in E(B)$ is in $\partial B$ if and only if one of its two contiguous faces $F_1, F_2$ consists only of vertices of the cluster $B$ and its offspring, while the other contains a vertex from $B^+$.
\end{defi}

Note that in general, it may be the case that $F_1=F_2$, but when all degrees of $\Lambda$ are even, these two faces must be distinct.

\begin{theorem} \label{thm:planar_lattice_balanced_orientation}
Let $\Lambda$ be a planar lattice with $m$-fold symmetry, $m\geq2$, in which all degrees are even. There is a finitary $\Aut(\Lambda)$-factor of $\left([0,1]^{V(\Lambda)},\mu_\Lambda\right)$ which is a balanced orientation of $\Lambda$ almost surely.
\end{theorem}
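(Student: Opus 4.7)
The plan is to combine Theorem~\ref{thm:latticehierarchy} with a cluster-by-cluster orientation procedure, carried out inductively from the bottom up along the hierarchy tree.

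First, I invoke Theorem~\ref{thm:latticehierarchy} to obtain, as a finitary $\Aut(\Lambda)$-factor of $([0,1]^{V(\Lambda)},\mu_\Lambda)$, a $k$-spaced hierarchy $\mathbf{H}$ on $\Lambda$ with some $k\geq 2$. Since every cluster $C\in\mathbf{H}$ is a finite set of lattice sites enclosed in the plane by $C^+$, only finitely many clusters can sit beneath $C$ in the hierarchy tree, so the subtree rooted at $C$ is finite. This finiteness justifies processing the hierarchy bottom up: every cluster is reached in finitely many steps starting from the deepest clusters in its descendant subtree.

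Second, I assign each edge of $\Lambda$ to the ``deeper'' of the two clusters containing its endpoints. Concretely, for $C\in\mathbf{H}$, let $E^*(C)$ consist of the internal edges of $C$ together with the edges between $C$ and $C^+$; then $\{E^*(C):C\in\mathbf{H}\}$ partitions $E(\Lambda)$. For $v\in C$, write $d_{\mathrm{dn}}(v)$ for the number of edges from $v$ to children of $C$. When $C$'s turn comes, the edges from $C$ to its children have already been oriented at earlier stages, so every $v\in C$ carries a fixed in-minus-out imbalance $a(v)$ from those edges, with $|a(v)|\leq d_{\mathrm{dn}}(v)$. The task is to orient $E^*(C)$ so that its contribution to the in-minus-out balance at every $v\in C$ is exactly $-a(v)$. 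This is a prescribed-in-degree orientation problem on the finite multigraph obtained from $E^*(C)$ by collapsing $C^+$ into a single auxiliary vertex; given feasibility, I pick a feasible orientation uniformly at random using the iid labels of the vertices of $C$. This step is finitary and $\Aut(\Lambda)$-equivariant because $C$ is finite and the hierarchy is itself a factor of iid.

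The central obstacle is verifying feasibility of the local orientation at every cluster. By the standard Hakimi-type criterion for orientations with prescribed in-degrees, feasibility reduces to a parity condition and a cut inequality. Parity is automatic because $\deg_\Lambda(v)$ is even: $a(v)\equiv d_{\mathrm{dn}}(v)\equiv \deg_\Lambda(v)-|E^*(C)\text{-edges at }v|\pmod 2$, which matches the required residue. The cut inequality is where planarity and the even-degree hypothesis genuinely combine. A violating subset $S\subseteq C$ would correspond, through the planar embedding inherited from $\Lambda$, to a closed curve in the plane separating $S$ from $(C\setminus S)\cup\{C^+\}$; the even-degree hypothesis then forces the parity of $\Lambda$-edges crossing this curve to be such that, together with the bound $|a(v)|\leq d_{\mathrm{dn}}(v)$, the aggregated imbalance $|\sum_{v\in S}a(v)|$ can always be absorbed into the $E^*(C)$-edges leaving $S$ (taking $k$ in the spacing sufficiently large compared to the maximum degree of $\Lambda$ guarantees there is enough ``slack'' on the boundary $\partial C$ for this argument). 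Once feasibility is established at every cluster, the cluster-by-cluster construction almost surely produces a balanced orientation of $\Lambda$ as a finitary $\Aut(\Lambda)$-factor of iid.
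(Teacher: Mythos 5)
Your proposal takes a genuinely different route from the paper, and the central step — feasibility of the prescribed-in-degree orientation at each cluster — is not established, and I do not believe it is true in the generality you need.

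Concretely: once the edges from a cluster $C$ down to its children are fixed (oriented at earlier stages), the imbalance $a(v)$ at a vertex $v\in C$ is constrained only by $|a(v)|\leq d_{\mathrm{dn}}(v)$, the number of such downward edges at $v$. For a singleton $S=\{v\}$ the cut condition already requires $|a(v)|\leq \deg_\Lambda(v)-d_{\mathrm{dn}}(v)$, which fails whenever $d_{\mathrm{dn}}(v)>\deg_\Lambda(v)/2$ and the child happens to orient all of its edges at $v$ in the same direction. Nothing in the bottom-up procedure prevents a child from doing this: the child only balances \emph{its own} vertices and is free on the cut edges. Your appeal to ``taking $k$ sufficiently large'' does not help here, because $k$-spacing controls the distance between \emph{different} boundaries, not how many neighbours a single boundary vertex of $C$ has inside a single child; the offending configuration is entirely local to one child's boundary. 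The sketch of the cut inequality via ``a closed curve separating $S$ from $(C\setminus S)\cup\{C^+\}$'' plus ``the even-degree hypothesis forces the parity of crossing edges'' is not an argument — parity is the part that is automatic (you verified it correctly), but the \emph{magnitude} of the imbalance is what must be bounded, and you give no mechanism for that.

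The paper sidesteps feasibility entirely by never posing a prescribed-in-degree problem. It fixes a canonical local pattern — the alternating in/out pattern around each vertex, equivalently a chessboard $2$-colouring of faces with black faces oriented clockwise and white counterclockwise — which is automatically balanced at every vertex of even degree and exists in exactly two versions per region. Each cluster picks one of the two at random; only on the boundary cycles $\partial C$, when $C$ and $C^+$ disagree, does one need a separate balanced orientation, and $\partial C$ always has one because every vertex meets an even number of boundary edges. The balance check is then a local parity argument around each vertex, never a global feasibility claim. To salvage your approach you would either have to constrain each child's orientation on its outgoing cut edges (which undoes the ``free choice'' and essentially forces you back to a canonical pattern), or prove a nontrivial lemma that the feasibility condition always holds — and that lemma, as stated, appears false.
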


\begin{proof}
$V(\Lambda)$ is a discrete set in $\mathbb{R}^2$, i.e.~it has no accumulation points, and so there is $\ell\in\mathbb{N}$ such that every face of $\Lambda$ consists of at most $\ell$ vertices. Let us fix a finitary factor of iid $\frac{\ell+1}{2}$-spaced hierarchy $\mathbf{H}$ on $\Lambda$, so that no face contains vertices of non-adjacent clusters, and consider the boundaries $\partial B$, $B\in \mathbf{H}$.

We observe that for every vertex $v\in B$, the number of edges from $\partial B$ that are incident to $v$ is even. Indeed, let $r$ be a positive number such that the disc $D$ with radius $r$ centred on $v$ contains no other vertices than $v$, and let $x_1,\dots,x_{\deg (v)}$ be the neighbours of $v$ as we traverse them clockwise. Then the edges $vx_1,\dots,vx_{\deg (v)}$ split $D$ into $\deg (v)$ many areas $A_{i-1,i}$, where $vx_{i-1}$ moves through $A_{i-1,i}$ clockwise towards $vx_i$ (all indices are taken$\pmod{\deg (v)}$). Now call $F_{i-1,i}$ the faces spanned by $A_{i-1,i}$; note that we might have $F_{i-1,i}=F_{j-1,j}$ even when $i\neq j$. The faces $F_{i-1,i}$ are of two types -- those with all vertices from the cluster $B$ and its offspring and those with a vertex from $B^+$.
As no face contains vertices of non-adjacent clusters, the edge $vx_i$ is in $\partial B$ if and only if the two consecutive faces $F_{i-1,i}, F_{i,i+1}$ are of different types. But when we start in $F_{1,2}$ and make one full circle clockwise back to $F_{1,2}$, the change of types must occur even number of times.

Even though the construction of a spaced hierarchy $\mathbf{H}$ on $\Lambda$ based on Proposition~\ref{prop:hierarchy} and Theorem~\ref{thm:latticehierarchy} does not ensure that all clusters are connected, it does say that $\cup_{n=1}^\infty C^{n+}$ is connected (and infinite) for every $C\in \mathbf{H}$. Therefore, there is no vertex from $\cup_{n=1}^\infty C^{n+}$ inside $\partial C$ (each cycle of $\partial C$ splits the plane into a finite and an infinite region; by inside, we mean the union of the finite regions).
On the other hand, all vertices of any offspring of $C$ are inside $\partial C$, so any cluster which has a child must have non-empty boundary. Let us now reassign any vertices of a cluster $C$ which are outside $\partial C$ to $C^+$ -- these are exactly the vertices such that all faces they are in contain a vertex from $C^+$.

Every cluster $C$ will now randomly choose one of two allowed orientation patterns which will be given to all non-boundary edges with at least one endpoint in $C$ and none in $C^+$. These are exactly the edges in the finite region enclosed by $\partial C$, but outside the finite regions enclosed by $\partial B$ for all $B$ with $B^+=C$.

The patterns are as follows: for a vertex $v$ and its neighbours $x_1,\dots,x_{\deg(v)}$, ordered as we traverse them clockwise, we want the orientations of $vx_1,\dots,vx_{\deg(v)}$ to be alternating between into $v$ and out of $v$. In other words, we choose randomly one of the two chessboard colourings of the faces in this region, and orient cycles bounding black faces clockwise and cycles bounding white faces counter-clockwise.

For every cluster $C$, if $C$ and $C^+$ happen to choose the same pattern, we will propagate it to $\partial C$ as well. If $C$ and $C^+$ choose different patterns, then $\partial C$ randomly picks one of its balanced orientations (recall that $\partial C$ has all degrees even, so a balanced orientation exists).

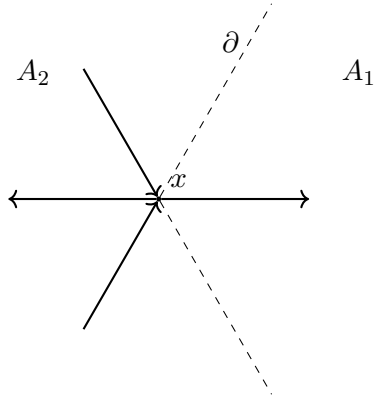
\begin{figure}[h]
    \centering
    \begin{tikzpicture}[node distance=1cm,
        main node/.style={circle,minimum width=1.9pt, fill, inner sep=0pt,outer sep = 0pt},
        red node/.style={circle,minimum width=2.2pt, fill,red, inner sep=0pt,outer sep = 0pt},
        blue node/.style={circle,minimum width=2.2pt, fill,blue, inner sep=0pt,outer sep = 0pt},
        square node/.style={draw,regular polygon,regular polygon sides=4,fill,inner sep=0.9pt,outer sep=0.1pt},
        triangle node/.style={draw,regular polygon,regular polygon sides=3,fill,inner sep=0.8pt,outer sep=0.1pt}]  
        
        \node[main node] (0) at (0,0) [label=above right:{$x$}] {};
        
        \draw[dashed] (0) -- (60:3) node[pos=0.8, left] {$\partial$};
        \draw[dashed] (0) -- (300:3);
        
        \node (a1) at (30:2.5) [label=above right:{$A_1$}] {};
        \node (a2) at (150:2.5) [label=above right:{$A_2$}] {};

        \draw[->, thick] (0) -- (0:2);
        \draw[<-, thick] (0) -- (120:2);
        \draw[->, thick] (0) -- (180:2);
        \draw[<-,thick] (0) -- (240:2);
    \end{tikzpicture}

    \caption{A disagreement between patterns around a vertex of degree 6}
    \label{fig:lattice_balanced_orientation}
\end{figure}

Finally, we claim that the resulting global orientation of $\Lambda$ is balanced. If a vertex $v\in C$ is not in $\partial C$ or if $C$ and $C^+$ agree on their patterns, then all the edges incident to $v$ follow the same pattern, i.e.~they alternate between in and out and the orientation is balanced at $v$. Now suppose $C$ and $C^+$ have got different patterns and some of the edges incident to $v$ are in $\partial C$. Then let $D$ be a disc centred on $v$ as before -- the boundary edges split it into areas $A_0,\dots,A_{2n-1}$, ordered as we traverse them clockwise, for some integer $n$, where in any pair of adjacent areas, one is inside $\partial C$ and one is outside.
Inside each area $A_i$, the number of non-boundary edges is either even or odd. If it is even, half of these edges are oriented towards $v$ and half from $v$ because they all follow the same pattern. If the number is odd, let $j$ be the smallest positive integer such that $A_{i+j}$ also contains odd number of non-boundary edges, where $i+j$ is understood mod $2n$. If $j$ is even then $A_i$ and $A_{i+j}$ follow the same pattern and the number of edges between them is even, so exactly half of the edges in $A_i\cup A_{i+j}$ is oriented towards $v$ and half from $v$. If $j$ is odd then $A_i$ and $A_{i+j}$ follow different patterns and the number of edges between them is odd, which however also means that exactly half of the edges in $A_i\cup A_{i+j}$ is oriented towards $v$ and half from $v$.
Combined with the fact that due to the balanced orientation of $\partial C$, half of the boundary edges go into $v$ and half out of $v$, this means that the orientation at $v$ is balanced.

\end{proof}

Let us note that the first half of the proof uses neither that the lattice $\Lambda$ is symmetric nor that all degrees are even, so we can also use it to deduce the following technical lemma that we will heavily use in Section~\ref{section:archimedean}.

\begin{lemma}\label{lemma:withinboundaries}
Let $\Lambda$ be a planar lattice and suppose there is a (finitary) $\Aut(\Lambda)$-factor of iid which is almost surely a hierarchy on $\Lambda$. Then for every $k\in\mathbb{N}$, there is a (finitary) $\Aut(\Lambda)$-factor of iid which is almost surely a $k$-spaced hierarchy with the additional property that the boundary $\partial C$ of any cluster $C$ is a union of edge-disjoint cycles and any path between any $u\in C$, $v\in \cup_{n=1}^\infty C^{n+}$ must cross $\partial C$.
Subsequently, for any distinct clusters $B,C$, we have that $d(\partial B,\partial C)\geq k-\frac{\ell}{2}$, where $\ell$ is an upper bound on the number of vertices forming any face of $\Lambda$.
\end{lemma}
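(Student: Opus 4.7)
The plan is to strengthen the given hierarchy using Proposition~\ref{prop:hierarchy} and then repeat the face-counting argument and reassignment step from the proof of Theorem~\ref{thm:planar_lattice_balanced_orientation}. First, apply Proposition~\ref{prop:hierarchy} to the given factor-of-iid hierarchy to obtain a (finitary) $\Aut(\Lambda)$-factor of iid $K$-spaced hierarchy $\mathbf{H}'$ on $\Lambda$, where $K=\max(k,\lceil\ell/2\rceil+1)$. Any two vertices sharing a face of $\Lambda$ lie at $\Lambda$-distance at most $\lfloor\ell/2\rfloor<K$, so no face of $\Lambda$ contains vertices of two non-adjacent clusters of $\mathbf{H}'$. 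Hence, for every $C\in\mathbf{H}'$ the boundary $\partial C$ is well defined and each of its edges has both endpoints in $C$.

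Next, the face-counting argument from the first half of the proof of Theorem~\ref{thm:planar_lattice_balanced_orientation} applies verbatim: around any $v\in C$, the cyclically ordered incident faces alternate between ``fully inside $C$ and its descendants'' and ``containing a vertex of $C^+$'', with an even number of transitions, so $v$ meets an even number of $\partial C$-edges. Therefore $\partial C$ is Eulerian and decomposes into edge-disjoint cycles, which jointly bound a finite region of the plane consisting of exactly the vertices of $C$ together with its descendants. By the Jordan curve theorem, any $\Lambda$-path from a vertex inside this region to a vertex of $\bigcup_{n\geq 1}C^{n+}$ must traverse an edge of $\partial C$.

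To upgrade the conclusion from ``$u$ inside the enclosed region'' to ``$u\in C$'', reassign every $v\in C$ all of whose incident faces contain a $C^+$-vertex to $C^+$, and let $\mathbf{H}_k$ denote the resulting partition. Any neighbor $w\notin C$ of such a $v$ must lie in $C^+$: otherwise $w$ would sit in a child $D$ of $C$, and the face through the edge $vw$ would, by the reassignment condition for $v$, also contain a $C^+$-vertex, placing $D$ and $C^+$ at $\Lambda$-distance at most $\lfloor\ell/2\rfloor<K$---contradicting the $K$-spacing, as $D$ and $C^+$ are grandchild and grandparent in $\mathbf{H}'$, hence non-adjacent. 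Therefore the reassignment introduces no non-parent-child adjacencies; the tree structure of the hierarchy is preserved, and a short check shows that $\partial C$ computed in $\mathbf{H}_k$ coincides with $\partial C$ computed in $\mathbf{H}'$. Every vertex of $C$ in $\mathbf{H}_k$ now lies strictly inside $\partial C$, so the desired path-crossing property follows. As the cluster vertex sets have only shrunk, $\mathbf{H}_k$ remains $K$-spaced, and in particular $k$-spaced.

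For the final inequality, take distinct clusters $B,C\in\mathbf{H}_k$ and $u\in\partial B\subseteq B$, $v\in\partial C\subseteq C$. If $B,C$ are non-adjacent in $\mathbf{H}_k$ then $d_\Lambda(u,v)\geq K\geq k\geq k-\ell/2$. Otherwise, assuming WLOG $B^+=C$, the clusters $B$ and $C^+$ are distinct and non-adjacent (they are grandchild and grandparent), so $d_\Lambda(B,C^+)\geq K$; since $v\in\partial C$ shares a face with some $w\in C^+$ and hence $d_\Lambda(v,w)\leq\lfloor\ell/2\rfloor$, the triangle inequality yields $d_\Lambda(u,v)\geq K-\ell/2\geq k-\ell/2$. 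The main obstacle I expect is the bookkeeping in the third paragraph: confirming that the reassignment really does preserve the hierarchy and leaves $\partial C$ intact requires careful use of the $K$-spacing, but once that is in place the remaining claims follow immediately.
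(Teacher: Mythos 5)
Your proposal follows essentially the same route as the paper: (i) feed the given hierarchy through Proposition~\ref{prop:hierarchy} to obtain spacing at least $\max\{k,\tfrac{\ell+1}{2}\}$ (or your equivalent $K$), so that no face straddles non-adjacent clusters; (ii) re-run the even-degree face-counting argument from Theorem~\ref{thm:planar_lattice_balanced_orientation} to see that each $\partial C$ is Eulerian, hence a union of edge-disjoint cycles; (iii) reassign the ``stragglers'' of each cluster outward so that the cluster coincides with the set of vertices trapped inside its own boundary and outside its children's; and (iv) derive the $d(\partial B,\partial C)\geq k-\tfrac{\ell}{2}$ bound by the triangle inequality, using that a boundary vertex of $C$ shares a face with $C^{+}$ while $B$ and $C^{+}$ are non-adjacent. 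Your extra paragraph verifying that the reassignment creates no grandparent--grandchild adjacencies, and hence preserves the tree structure, is a useful elaboration of what the paper merely asserts.

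One sentence is not quite right, though: ``As the cluster vertex sets have only shrunk, $\mathbf{H}_k$ remains $K$-spaced.'' The reassignment is two-sided---each cluster $C$ loses its own stragglers to $C^{+}$, but simultaneously \emph{gains} the stragglers of its children. So the new vertex set is not a subset of the old one, and the naive ``only shrunk'' argument for preserving the spacing does not go through. The correct observation is that any vertex $v$ gained by $C$ from a child $D$ has every incident face meeting $C$, so $d(v,C)\leq\lfloor\ell/2\rfloor$; combining this with the original $K$-spacing shows the new hierarchy is only $(K-\lfloor\ell/2\rfloor)$-spaced in the worst case. This is harmless---one simply starts from a $(k+\lceil\ell/2\rceil)$-spaced hierarchy instead (and the published proof is equally terse on this point, so you are in good company)---but the justification you gave is incorrect as written and worth replacing with the argument just sketched.
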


\begin{proof}
Given $k\in\mathbb{N}$, let us fix a $\max\{k,\frac{\ell+1}{2}\}$-spaced hierarchy $\mathbf{H}_k$ (whose existence is ensured by Proposition~\ref{prop:hierarchy}), and observe that we can repeat verbatim the proof of that for every vertex $v\in B$, the number of edges from $\partial B$ that are incident to $v$ is even. It also remains true that for any cluster $C\in \mathbf{H}_k$, the vertices $v\in C$ which remained outside of $\partial C$ are exactly those such that all faces they are in contain a vertex from $C^+$.
Let us repeat the reassignment of vertices outside $\partial C$ to $C^+$ so that for any cluster $C$,
$C=\{v\in V(\Lambda) : v \text{ is inside } \partial C \text{ but outside } \partial B \text{ for every } B \text{ such that } B^+=C\}\cup\partial C$ as required. In particular, this reassignment leaves the boundaries $\partial C$, $C\in \mathbf{H}_k$ intact and ensures the second property.

Finally, if $B$ and $C$ are distinct non-adjacent clusters then $d(\partial B, \partial C)\geq k$ simply because the hierarchy is $k$-spaced. On the other hand, for any $C\in \mathbf{H}_k$ and any vertex $u\in\partial C^+$, the definition of boundary tells us that $d(u,C^{++})\leq\frac{\ell}{2}$. But $d(\partial C, C^{++})\geq k$ as $C$ and $C^{++}$ do not neighbour, so by the triangle inequality,
\[k\leq d(\partial C, C^{++})\leq d(\partial C, \partial C^+)+d(\partial C^+, C^{++})\leq d(\partial C, \partial C^+)+\frac{\ell}{2}.\]
Thus $d(\partial C, \partial C^+)\geq k-\frac{\ell}{2}$ as required.
\end{proof}

\section{Schreier decorations of Archimedean lattices and $\Lambda_\square^d, d\geq3$ as finitary factors of iid}
\label{section:archimedean}

An \emph{Archimedean lattice} is a vertex-transitive tiling of the plane by regular polygons. It is known that there are ten of them (eleven if we count separately the two mirror images of the lattice $(3^4,6)$; see e.g.~\cite[Chapter 5]{BollRior}), out of which four have even regularity. These are the infinite grid $\Lambda_\square$ (that is the Cayley graph of $\mathbb{Z}^2$ with the standard generators), the triangular lattice $T$ (the only 6-regular planar lattice), the Kagom\'e lattice (i.e.\ the line graph of the hexagonal lattice $H$), and the 4-regular lattice $(3,4,6,4)$ where the numbers denote the orders of the faces when we go round a vertex.

\subsection{The square lattice}\label{subsection:square}

\begin{proof}[Proof of Theorem~\ref{thm:archimedean_schreier} for $\Lambda=\Lambda_\square$]
\iffalse
Before presenting the detailed proof, let us first sketch how it will proceed:
\begin{itemize}
    \item We use site percolation to split the plane into finite green and yellow clusters each of which has a unique cluster surrounding it.
    \item Some clusters are merged to create blobs -- this produces a partition of $V(S)$ whose parts are in a sense thicker than the simple percolation clusters.
    \item Each blob randomly chooses one of four allowed red-blue edge colourings.
    \item Each interface between blobs chooses one of two allowed red-blue edge colourings.
    \item At boundaries of blobs, we make the minimal changes necessary to ensure that the
    amalgamated edge colouring is balanced (i.e.\ every vertex has two red incident edges and two blue).
    \item We prove that this balanced edge colouring has a.s. no infinite monochromatic paths, i.e.~it is a union of finite monochromatic cycles.
    \item Each finite monochromatic cycle randomly picks a strong orientation for itself.
\end{itemize}
\fi
We start with independent $[0,1]$-labels on $V(\Lambda)$.

\begin{enumerate}[label=\textbf{Step \arabic* }]

\item\textbf{Spaced hierarchy.}
The square lattice has got 4-fold symmetry, so we can use Theorem~\ref{thm:latticehierarchy}, Proposition~\ref{prop:hierarchy} and Lemma~\ref{lemma:withinboundaries} to create a $k$-spaced hierarchy $\mathbf{H}_{2,k}$ based on percolation clusters where $k$ is a sufficiently large integer. Moreover, each part $C\in \mathbf{H}_{2,k}$ is assigned a number from $\{1,2\}$ such that every child has got the other number than its parent.

\item\textbf{Definition of guards.}
The \emph{guards} of an edge $e=uv\in E(\Lambda_\square)$ are the four of its six incident edges that are perpendicular to it. That is, the four incident edges that appear with $e$ in a $C_4$ (see Figure~\ref{fig:guards}).

\begin{figure}[ht]
\centering
\begin{tikzpicture}[node distance=1cm,
main node/.style={circle,minimum width=1.9pt, fill, inner sep=0pt,outer sep = 0pt},
red node/.style={circle,minimum width=2.2pt, fill,red, inner sep=0pt,outer sep = 0pt},
blue node/.style={circle,minimum width=2.2pt, fill,blue, inner sep=0pt,outer sep = 0pt},
square node/.style={draw,regular polygon,regular polygon sides=4,fill,inner sep=0.9pt,outer sep=0.1pt},
triangle node/.style={draw,regular polygon,regular polygon sides=3,fill,inner sep=0.8pt,outer sep=0.1pt}]   
      \node[main node] (v) at (0,0) [label=below:{ $u$}] {};
  \node[main node] (w) at (2,0) [label=below left:{\includegraphics[width=3.5mm]{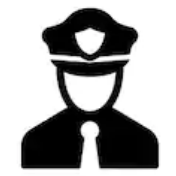}\scriptsize guard}] {};
  \node[main node] (u) at (-2,0)[label=below right:{\includegraphics[width=3.5mm]{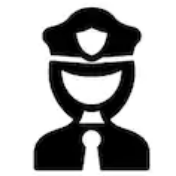}\scriptsize guard}] {};
  \node[main node] (b) at (0,2) [label=above:{ $v$}]{};
  \node[main node] (a) at (-2,2) [label=above right:{\includegraphics[width=3.5mm]{guardguy.png}\scriptsize guard}]{};
  \node[main node] (c) at (2,2) [label=above left:{\includegraphics[width=3.5mm]{guardgirl.png}\scriptsize guard}]{};
  \path [draw=gray,ultra thin] (u) edge (v);
  \path [draw=gray,ultra thin] (v) edge (w);
  \path [draw=gray,ultra thin] (a) edge (b);
  \path [draw=gray,thick] (v) edge (b);
  \path [draw=gray,ultra thin] (a) edge (u);
  \path [draw=gray,ultra thin] (b) edge (c);
  \path [draw=gray,ultra thin] (c) edge (w);
\end{tikzpicture}
\caption{Each edge has four guards.}
\label{fig:guards}
\end{figure}

\item\textbf{Red-blue edge colourings inside clusters.}
Each cluster numbered 1 wants to determine for itself an edge colouring consisting of monochromatic $C_4$-s.
That is, each such cluster $B$ finds a vertex $v\in B$ (e.g. the one with the largest label) and chooses one of the four $C_4$-s containing $v$ to have all its edges coloured red. As we want our $B$-pattern to be the union of blue $C_4$-s and red $C_4$-s and such that every $u\in B$ has two incident edges blue and two red, fixing one red $C_4$ determines the rest of the pattern. We call this colouring the \textit{inner pattern} of $B$.

\begin{figure}%[ht]
\centering
\begin{subfigure}[b]{.4\linewidth}
    \centering
    \begin{tikzpicture}[node distance=1cm,
    main node/.style={circle,minimum width=1.9pt, fill, inner sep=0pt,outer sep = 0pt},
    red node/.style={circle,minimum width=2.2pt, fill,red, inner sep=0pt,outer sep = 0pt},
    blue node/.style={circle,minimum width=2.2pt, fill,blue, inner sep=0pt,outer sep = 0pt},
    square node/.style={draw,regular polygon,regular polygon sides=4,fill,inner sep=0.9pt,outer sep=0.1pt},
    triangle node/.style={draw,regular polygon,regular polygon sides=3,fill,inner sep=0.8pt,outer sep=0.1pt},
    scale=1.5]   
        \node[main node] (a0) at (0,0) {};
        \node[main node] (a1) at (0,1) {};
        \node[main node] (a2) at (0,2) {};
        \node[main node] (a3) at (0,3) {};
        
        \node[main node] (b0) at (1,0) {};
        \node[main node] (b1) at (1,1) {};
        \node[main node] (b2) at (1,2) {};
        \node[main node] (b3) at (1,3) {};
        
        \node[main node] (c0) at (2,0) {};
        \node[main node] (c1) at (2,1) {};
        \node[main node] (c2) at (2,2) {};
        \node[main node] (c3) at (2,3) {};
        
        \node[main node] (d0) at (3,0) {};
        \node[main node] (d1) at (3,1) {};
        \node[main node] (d2) at (3,2) {};
        \node[main node] (d3) at (3,3) {};
        
        \path [draw=red, dashed, thin] (a0) edge (a1);
        \path [draw=red, dashed, thin] (b0) edge (b1);
        \path [draw=red, dashed, thin] (c0) edge (c1);
        \path [draw=red, dashed, thin] (d0) edge (d1);
        
        \path [draw=blue, thin] (a1) edge (a2);
        \path [draw=blue, thin] (b1) edge (b2);
        \path [draw=blue, thin] (c1) edge (c2);
        \path [draw=blue, thin] (d1) edge (d2);
        
        \path [draw=red, dashed, thin] (a2) edge (a3);
        \path [draw=red, dashed, thin] (b2) edge (b3);
        \path [draw=red, dashed, thin] (c2) edge (c3);
        \path [draw=red, dashed, thin] (d2) edge (d3);

        \path [draw=red, dashed, thin] (a0) edge (b0);
        \path [draw=red, dashed, thin] (a1) edge (b1);
        \path [draw=red, dashed, thin] (a2) edge (b2);
        \path [draw=red, dashed, thin] (a3) edge (b3);
        
        \path [draw=blue, thin] (b0) edge (c0);
        \path [draw=blue, thin] (b1) edge (c1);
        \path [draw=blue, thin] (b2) edge (c2);
        \path [draw=blue, thin] (b3) edge (c3);
        
        \path [draw=red, dashed, thin] (c0) edge (d0);
        \path [draw=red, dashed, thin] (c1) edge (d1);
        \path [draw=red, dashed, thin] (c2) edge (d2);
        \path [draw=red, dashed, thin] (c3) edge (d3);

\end{tikzpicture}
\end{subfigure}\qquad%
\begin{subfigure}[b]{.4\linewidth}
    \centering
     \begin{tikzpicture}[node distance=1cm,
    main node/.style={circle,minimum width=1.9pt, fill, inner sep=0pt,outer sep = 0pt},
    red node/.style={circle,minimum width=2.2pt, fill,red, inner sep=0pt,outer sep = 0pt},
    blue node/.style={circle,minimum width=2.2pt, fill,blue, inner sep=0pt,outer sep = 0pt},
    square node/.style={draw,regular polygon,regular polygon sides=4,fill,inner sep=0.9pt,outer sep=0.1pt},
    triangle node/.style={draw,regular polygon,regular polygon sides=3,fill,inner sep=0.8pt,outer sep=0.1pt},
    scale=1.5]   
        \node[main node] (a0) at (0,0) {};
        \node[main node] (a1) at (0,1) {};
        \node[main node] (a2) at (0,2) {};
        \node[main node] (a3) at (0,3) {};
        
        \node[main node] (b0) at (1,0) {};
        \node[main node] (b1) at (1,1) {};
        \node[main node] (b2) at (1,2) {};
        \node[main node] (b3) at (1,3) {};
        
        \node[main node] (c0) at (2,0) {};
        \node[main node] (c1) at (2,1) {};
        \node[main node] (c2) at (2,2) {};
        \node[main node] (c3) at (2,3) {};
        
        \node[main node] (d0) at (3,0) {};
        \node[main node] (d1) at (3,1) {};
        \node[main node] (d2) at (3,2) {};
        \node[main node] (d3) at (3,3) {};
        
        \path [draw=blue, thin] (a0) edge (a1);
        \path [draw=blue, thin] (b0) edge (b1);
        \path [draw=blue, thin] (c0) edge (c1);
        \path [draw=blue, thin] (d0) edge (d1);
        
        \path [draw=red, dashed, thin] (a1) edge (a2);
        \path [draw=red, dashed, thin] (b1) edge (b2);
        \path [draw=red, dashed, thin] (c1) edge (c2);
        \path [draw=red, dashed, thin] (d1) edge (d2);
        
        \path [draw=blue, thin] (a2) edge (a3);
        \path [draw=blue, thin] (b2) edge (b3);
        \path [draw=blue, thin] (c2) edge (c3);
        \path [draw=blue, thin] (d2) edge (d3);

        \path [draw=red, dashed, thin] (a0) edge (b0);
        \path [draw=red, dashed, thin] (a1) edge (b1);
        \path [draw=red, dashed, thin] (a2) edge (b2);
        \path [draw=red, dashed, thin] (a3) edge (b3);
        
        \path [draw=blue, thin] (b0) edge (c0);
        \path [draw=blue, thin] (b1) edge (c1);
        \path [draw=blue, thin] (b2) edge (c2);
        \path [draw=blue, thin] (b3) edge (c3);
        
        \path [draw=red, dashed, thin] (c0) edge (d0);
        \path [draw=red, dashed, thin] (c1) edge (d1);
        \path [draw=red, dashed, thin] (c2) edge (d2);
        \path [draw=red, dashed, thin] (c3) edge (d3);

    \end{tikzpicture}
\end{subfigure}

    \caption{Examples of what the inner pattern can look like on a fixed 16-vertex square}
\end{figure}
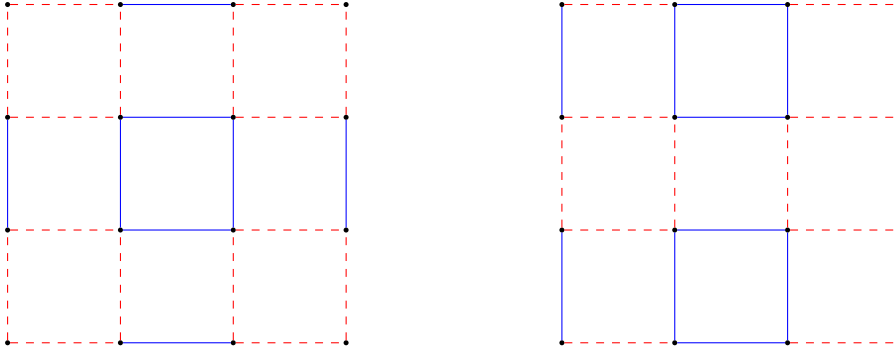

On the other hand, each cluster numbered 2 wants to determine for itself an edge colouring in which all parallel edges have got the same colour (and so every $C_4$ has got two edges red and two blue in an alternating manner, and every edge has different colour than its guards).
We call such a colouring an \emph{interface pattern}.

\item\textbf{Amalgamating the patterns.}
For every cluster $C$ numbered 2 (i.e.~a cluster with an interface pattern), we will apply its pattern to all edges with at least one endpoint in $C$. Let $B$ be a cluster numbered 1 and $e$ an edge with both endpoints in $B$.
Let $S$ be the set of the guards of $e$ whose other endpoint (the one not shared with $e$) is not in $B$ (note that $S$ might be empty).
Then we colour $e$ as follows: If there is $g\in S$ on which the inner pattern of $B$ (should it be extended there) disagrees with the interface pattern by which $g$ is coloured, we colour $e$ according to the interface pattern, i.e.~with the opposite colour than $g$ has. This may or may not coincide with the colour the inner pattern would give to $e$. If there is no such guard $g\in S$, $e$ is coloured according to the inner pattern of $B$.
Our hierarchy being $k$-spaced (where $k\geq4$) ensures that this is well defined.

\item \textbf{Claim that the colouring is balanced.} In the edge colouring defined above, every vertex $v \in V(\Lambda)$ has exactly two red and two blue incident edges a.s.
\begin{proof}[Proof of claim]
If $v\in C$ where $C$ is numbered 2, then all its four incident edges follow the same interface pattern. As any interface pattern is internally balanced, so are the colours at $v$.

So let us now assume that $v$ is in a cluster $B$ numbered 1, and consider the 9-vertex square centred on $v$.
Regardless of what clusters the remaining 8 vertices belong to, the final colouring of the four edges incident to $v$ can be described by saying that we first colour all four of them with the  inner pattern (so that a perpendicular pair is red and the complementary one blue), and then recolour if necessary.

First observe that thanks to the hierarchy being $k$-spaced where $k\geq5$, the 9-vertex square does not contain both a vertex from $B^+$ and a vertex from a $B^-$. Hence there is only one interface pattern according to which edges may wish to recolour themselves.
Also note that regardless of the inner and the interface pattern, exactly one red and exactly one blue edge out of the four edges incident to $v$ already follow the interface pattern at the beginning, so they will not get recoloured in any case. That is, at most two of the four edges will get recoloured.

If two edges get recoloured, then as explained above, one of them is initially blue and one is initially red. So recolouring both of them does not change the multiplicity of colours incident to $v$, and as this multiplicity was balanced to start with, it will remain balanced. If no edges get recoloured, then again, we invoke that the initial colouring was already balanced. Finally, we would like to argue that it cannot happen that exactly one of the four edges gets recoloured.

Let $uv$ and $bv$ be the two perpendicular edges on which  the inner pattern and the interface pattern disagree, and suppose on the contrary that only $uv$ gets recoloured. Note that $uv$ is a guard of $bv$, so if $u\notin B$ then \emph{both} $uv$ and $bv$ would get recoloured. Also $bv$ is a guard of $uv$, so by the same logic we also must have $b\in B$. If $a$, the fourth corner of the square spanned by $b$, $u$ and $v$, was not in $B$, then again \emph{both} $bv$ and $uv$ would get recoloured. Finally, the inner pattern and the interface pattern agree on the two remaining guards of $uv$, so they will not be recoloured in any case, meaning $uv$ also does not get recoloured. But that is in contradiction with the assumption that $uv$ does get recoloured. Hence it cannot happen that exactly one of the four edges incident to $v$ gets recoloured.

\begin{figure}[ht]
     \centering
         \begin{tikzpicture}[node distance=1cm,
main node/.style={circle,minimum width=1.9pt, fill, inner sep=0pt,outer sep = 0pt},
red node/.style={circle,minimum width=2.2pt, fill,red, inner sep=0pt,outer sep = 0pt},
blue node/.style={circle,minimum width=2.2pt, fill,blue, inner sep=0pt,outer sep = 0pt},
square node/.style={draw,regular polygon,regular polygon sides=4,fill,inner sep=0.9pt,outer sep=0.1pt},
triangle node/.style={draw,regular polygon,regular polygon sides=3,fill,inner sep=0.8pt,outer sep=0.1pt}]   
  \node[main node] (a) at (-1.4,1.4) [label=above:{\footnotesize $a\in B$}] {};
  \node[main node] (b) at (0,1.4) [label=above:{\footnotesize $b\in B$}] {};
  \node[main node] (c) at (1.4,1.4)[label=right:{}] {};
  \node[main node] (u) at (-1.4,0) [label=left:{\footnotesize $u\in B$}] {};
  \node[main node] (v) at (0,0) [label=below right:{\footnotesize $v\in B$}] {};
  \node[main node] (w) at (1.4,0)[label=right:{}] {};
  \node[main node] (x) at (-1.4,-1.4) [label=above:{}] {};
  \node[main node] (y) at (0,-1.4) [label=below:{}] {};
  \node[main node] (z) at (1.4,-1.4)[label=right:{}] {};
  \path [draw=gray, thin] (a) edge (b);
  \path [draw=gray, thin] (b) edge (c);
  \path [draw=gray, thin] (a) edge (u);
  \path [draw=gray, thin] (b) edge (v);
  \path [draw=gray, thin] (c) edge (w);
  \path [draw=gray, thin] (u) edge (v);
  \path [draw=gray, thin] (v) edge (w);
  \path [draw=gray, thin] (u) edge (x);
  \path [draw=gray, thin] (v) edge (y);
  \path [draw=gray, thin] (w) edge (z);
  \path [draw=gray, thin] (x) edge (y);
  \path [draw=gray, thin] (y) edge (z);
\end{tikzpicture}
        \caption{Vertex $v$ has two red and two blue incident edges.}
\end{figure}
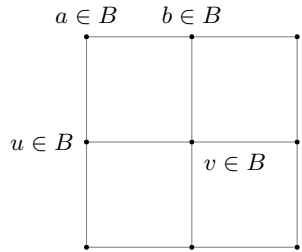

\end{proof}

\item \textbf{Claim that there are no infinite paths.}
There are no infinite monochromatic paths in the balanced colouring defined above a.s. That is, the set of blue edges is a union of vertex-disjoint finite blue cycles, and similarly the set of red edges is a union of vertex-disjoint finite red cycles.
\begin{proof}[Proof of claim]
Suppose there is an infinite monochromatic path $P$.
Let $A$ be a cluster whose intersection with $P$ is non-empty.
Then the path must share a vertex with every cluster in the sequence $(A^{n+})_{n=0}^\infty$, where $A^{0+}\vcentcolon=A$ and $A^{n+}\vcentcolon=\big(A^{(n-1)+}\big)^+$ for every positive integer $n$.
Pick the $n\in\{1,2\}$ such that $A^{n+}$ is numbered 1, and so gets the inner pattern.
There must be a section $u,v_1,\dots,v_m$ of $P$ such that $v_m\in\partial A^{n+}$, $v_i\in A^{n+}\setminus\partial A^{n+}$ for every $i\in[m-1]$, and $u\in\partial B$ for some cluster $B$ such that $B^+=A^{n+}$.
We used Lemma~\ref{lemma:withinboundaries} when building the $k$-spaced hierarchy,
so we can now deduce that $m\geq k-2$.
Moreover, for every $i\in[m-(k-5),m]$ and every cluster $C$ such that $C^+=A^{n+}$,
\[
k-2\leq d(C,\partial A^{n+})\leq d(C,v_i)+d(v_i,\partial A^{n+})\leq d(C,v_i)+k-5,
\]
and so $d(C,v_i)\geq3$.
This means that the edges $v_{m-(k-4)}v_{m-(k-5)},\dots,v_{m-1}v_{m}$
must have empty $S$, and so must all follow the inner pattern of $A^{n+}$. 
In particular, as $k\geq8$, this is at least four edges. But no monochromatic walk of length four in an inner pattern can contain five distinct vertices (i.e.~be a path) because inner patterns are made up of monochromatic $C_4$-s. This would be a contradiction, and so there is no infinite monochromatic path almost surely.
\end{proof}
\item\textbf{Orientation.}
Finally, let each monochromatic cycle choose one of the two strong orientations for itself (e.g.\ by finding the edge with the largest sum of labels and orienting it from the vertex with the larger label to the one with the smaller label). Then every vertex lies in one strongly oriented blue cycle and one strongly oriented red cycle, and so the coloured orientation is balanced.
\end{enumerate}
\end{proof}
\begin{remark}
In fact, it is not necessary to rely on Proposition~\ref{prop:hierarchy} to prove Theorem~\ref{thm:archimedean_schreier} for $\Lambda=\Lambda_\square$. It makes the proof neater, but one can also make do just with the basic percolation clusters for the matching pair $\{\Lambda_\square,\Lambda_\boxtimes\}$. The approach not using Proposition~\ref{prop:hierarchy} (in which every yellow cluster is merged with its green parent into a blob and edges bridging different blobs get an interface pattern) heavily relies on the fact that critical percolation on either of the lattices in this matching pair has no infinite cluster a.s.\ (though expected sizes of the clusters will be infinite) \cite{Russo}.
Interestingly, the related conjecture of there being no percolation at criticality (site or bond) in $\mathbb{Z}^d$ has not been settled for $3\leq d\leq18$ (see \cite{DNS} for a partial progress).
\end{remark}

\subsection{Grids in higher dimensions}

Our construction on $\Lambda_\square$ made use of the hierarchical structure of clusters, as well as of their thickness preventing local recolourings of edges close to boundaries and other tweaks to overlap. We achieved these desired properties by classical percolation-theoretic results, which are sadly not available in higher dimensions. 

However, one could also construct such clusters (arbitrarily spaced) using Borel-combinatorial results of Gao, Jackson, Krohne and Seward \cite{gao2015forcing, gao2018continuous}, as well as of Marks and Unger \cite{marks2017borel}.

In \cite{gao2015forcing} and \cite{gao2018continuous}, such hierarchical decompositions named (layered or unlayered) \textit{toasts} are defined. Theorem 5.5 in \cite{marks2017borel} (attributed to Gao, Jackson, Krohne, and Seward) states that any Borel action of $\mathbb{Z}^d$ has a Borel toast of arbitrary thickness. Even though we do not have a $\mathbb{Z}^d$ action on $[0,1]^{V(\Lambda_\square^d)}$, the proof in \cite[Appendix A]{marks2017borel} goes through with minor modifications. 
In fact, one only has to prove \cite[Lemma A.2]{marks2017borel} for $[0,1]^{V(\Lambda_\square^d)}$ up to nullsets, and the rest can be repeated verbatim.

\begin{lemma}[Factor of iid version of Lemma~A.2 from \cite{marks2017borel}]
For any sequence $r_1,r_2,\dots$ of integers, there exists a sequence $C_1,C_2,\dots$ of random  subsets of $V(\Lambda_\square^d)$ such that $C_i$ is a factor of iid maximal $r_i$-discrete subset of $V(\Lambda_\square^d)$, and for any $\varepsilon>0$ and $v\in V(\Lambda_\square^d)$,
\[
\mathbb{P}\left[~\left|\{i\in \mathbb{N} ~|~ B(v,\varepsilon r_i)\cap C_i \neq \emptyset \} \right|=\infty\right]=1.
\]
\end{lemma}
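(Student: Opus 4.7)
The plan is to produce each $C_i$ as a factor of a different, independent block of the iid randomness, so that the events in question become independent across $i$, and then to apply the second Borel--Cantelli lemma. First, I would split the uniform $[0,1]$-label at each vertex $v$ into independent uniform $[0,1]$-labels $\omega^{(i)}_v$, one for each $i \in \mathbb{N}$, as allowed by the discussion of decomposing iid labels in Section~\ref{section:basics}. For each fixed $i$, I construct $C_i$ by the standard factor-of-iid greedy construction of a maximal independent set in the auxiliary graph on $V(\Lambda_\square^d)$ whose edges are the pairs $\{u,v\}$ with $0 < d(u,v) < r_i$: declare $v \in C_i$ iff for every $u$ with $0 < d(u,v) < r_i$ and $\omega^{(i)}_u > \omega^{(i)}_v$ we have $u \notin C_i$. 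This recursion is a.s.\ well defined because a.s.\ there is no infinite strictly increasing chain of labels along a path in this locally finite graph (a standard Borel--Cantelli argument using the polynomial growth of $\Lambda_\square^d$), and it produces an $\Aut(\Lambda_\square^d)$-equivariant maximal $r_i$-discrete set $C_i$ that is measurable with respect to $\omega^{(i)}$ alone.

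Next I would prove a positive lower bound $p_i := \mathbb{P}\bigl[B(v, \varepsilon r_i) \cap C_i \neq \emptyset\bigr] \geq c_\varepsilon$ that is uniform in $i$. By $\mathbb{Z}^d$-translation invariance the density $\rho_i := \mathbb{P}[u \in C_i]$ does not depend on $u$. The $r_i$-discreteness gives the upper bound $\rho_i \leq |B(0, \lfloor r_i/2 \rfloor)|^{-1}$ via disjoint half-balls around points of $C_i$, while maximality gives the matching lower bound $\rho_i \geq |B(0, 2 r_i)|^{-1}$ via the deterministic covering $\bigcup_{u \in C_i} B(u, r_i - 1) = V(\Lambda_\square^d)$ and taking expectation on a large box; hence $\rho_i \asymp r_i^{-d}$ uniformly in $i$. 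It suffices to handle $\varepsilon < 1/2$ (the larger values being only easier): then $B(v, \varepsilon r_i)$ has diameter strictly less than $r_i$, so it contains at most one point of $C_i$, and therefore $p_i = \mathbb{E}[|C_i \cap B(v, \varepsilon r_i)|] = \rho_i \cdot |B(v, \varepsilon r_i)| \asymp \varepsilon^d$, a positive constant $c_\varepsilon$ independent of $i$.

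Finally, because $C_i$ is $\sigma(\omega^{(i)})$-measurable, the events $E_i := \{B(v, \varepsilon r_i) \cap C_i \neq \emptyset\}$ are mutually independent for each fixed $v$, and $\sum_i \mathbb{P}[E_i] \geq \sum_i c_\varepsilon = \infty$. The second Borel--Cantelli lemma yields $\mathbb{P}[E_i \text{ i.o.}] = 1$, and taking a countable union of the resulting null sets over $v \in V(\Lambda_\square^d)$ and over a sequence of $\varepsilon$ tending to $0$ gives the full statement. I expect the only genuinely delicate point to be verifying the uniformity in $i$ of the density estimate $\rho_i \asymp r_i^{-d}$ and the fact that the greedy construction yields a bona fide factor of iid; the Borel--Cantelli and independence steps are then essentially formal once the sampling scheme is in place.
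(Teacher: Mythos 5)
Your proof is correct but takes a different route from the paper's at two points. For the construction of $C_i$, you use the classical one-shot greedy maximal independent set on the auxiliary graph ($v\in C_i$ iff no $u$ within distance $<r_i$ with a larger label lies in $C_i$), with well-definedness secured by the a.s.\ absence of infinite increasing-label paths; the paper instead builds $C_i$ round by round as $\bigcup_j D_j$, using a fresh label $l_j$ in round $j$ and adjoining local maxima of $l_j$ whose $r_i$-ball is not yet touched. Both yield finitary factor-of-iid maximal $r_i$-discrete sets; the round-based version tracks the Borel argument of Marks--Unger that the lemma is named after. (For the termination of your recursion, bounded degree of the auxiliary graph already suffices; polynomial growth of $\Lambda_\square^d$ is not needed at that particular step.) For the lower bound $\mathbb{P}[B(v,\varepsilon r_i)\cap C_i\neq\emptyset]\geq c_\varepsilon$, you estimate the density $\rho_i\asymp r_i^{-d}$ by a packing/covering argument and, after reducing to $\varepsilon<1/2$ so that $B(v,\varepsilon r_i)$ contains at most one point of $C_i$, pass to the probability via linearity of expectation. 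The paper's argument is shorter and works for every $\varepsilon$ at once: with probability $|B(v,\varepsilon r_i)|/|B(v,(1+\varepsilon)r_i)|$, which is bounded below uniformly in $r_i$ by polynomial growth, the maximum of the first-round label $l_1$ over $B(v,(1+\varepsilon)r_i)$ lands in $B(v,\varepsilon r_i)$, and any such maximum automatically belongs to $D_1\subseteq C_i$. The final step --- the events across $i$ are independent because each $C_i$ consumes a disjoint block of the iid labels, then second Borel--Cantelli --- is the same in both proofs.
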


\begin{proof}
Since $\Lambda_\square^d$ is transitive, it is sufficient to prove the statement for a fixed vertex.

Let $r \in \mathbb{N}$ be fixed, and assume that each $v \in V(\Lambda_\square^d)$ has countably many iid labels $\big(l_1(v), l_2(v), \ldots\big)$. Let $D_0=\emptyset$, and define $D_j \subseteq V(\Lambda_\square^d)$ for $j \geq 1$ as follows:

\[v \in D_j \Leftrightarrow \Big( v\in D_{j-1} \Big) \textrm{ or } \Big(B_{\Lambda_\square^d}(v, r) \cap D_{j-1} = \emptyset \textrm{ and for all } u\in B_{\Lambda_\square^d}(v, r) \textrm{ we have } l_j(v)>l_j(u)\Big).\]

Each set $D_j$ is $r$-discrete, and therefore their increasing union $C = \cup_{j=1}^\infty D_j$ is also $r$-discrete. Moreover, $C$ is maximal $r$-discrete with probability 1, because if some $u \notin D_j$ could be added to $D_j$, then it is in $D_{j+1}$ with probability $1/ |B_{\Lambda_\square^d}\left(u, r\right)|$, which is bounded away from 0. If $u \notin C$ could be added to $C$, then an event with probability bounded away from 0 would have to have failed infinitely many times.

Now we argue that $\mathbb{P}[B_{\Lambda_\square^d}(v,\varepsilon r) \cap C \neq \emptyset] > c$, where $c>0$ is a constant independent of $r$. Indeed, with probability $\frac{|B_{\Lambda_\square^d}\left(v, \varepsilon r\right)|}{|B_{\Lambda_\square^d}\left(v, (1+\varepsilon) r\right)|}$, the maximum of $l_1$ within $B_{\Lambda_\square^d}\big(v, (1+\varepsilon) r\big)$ is obtained somewhere within $B_{\Lambda_\square^d}(v, \varepsilon r)$, and therefore $B_{\Lambda_\square^d}(v,\varepsilon r) \cap D_1 \neq \emptyset$, which also implies $B_{\Lambda_\square^d}\left(v,\varepsilon r\right) \cap C \neq \emptyset$. Thanks to the polynomial growth of $\Lambda_\square^d$, the ratio $\frac{|B_{\Lambda_\square^d}(v, \varepsilon r)|}{|B_{\Lambda_\square^d}\left(v, (1+\varepsilon) r\right)|}$ is bounded away from 0 by a constant independent of $r$.

Generating $C_i$ independently of each other using the above construction with $r=r_i$, we get the desired sequence of subsets. The set of indices $i$ where $B(v,\varepsilon r_i)\cap C_i \neq \emptyset$ is infinite because each $i$ is in it independently with probability at least $c$.
\end{proof}

The $C_i$ constructed above are in fact finitary factors. The rest of the proof of \cite[Theorem~5.5]{marks2017borel} implies the following. 

\begin{corollary} \label{cor:spaced_hierarchy_Z_d}
$\Lambda_\square^d$ admits a finitary factor of iid $k$-spaced hierarchy for any $k \in \mathbb{N}$.
\end{corollary}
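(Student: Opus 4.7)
The plan is to combine the preceding lemma with the construction carried out in the remainder of the proof of \cite[Theorem~5.5]{marks2017borel}, with the only change being that the Borel choices in that argument are replaced by deterministic choices based on the iid labels in an $\Aut(\Lambda_\square^d)$-equivariant manner. Fix $k \in \mathbb{N}$ and a rapidly increasing sequence $r_1 < r_2 < \cdots$ with $r_1 \geq 10k$ and, say, $r_{i+1} \geq 100\, r_i$, together with a small $\varepsilon \in (0,1)$. The lemma supplies finitary factor of iid maximal $r_i$-discrete sets $C_i \subseteq V(\Lambda_\square^d)$ such that almost surely, for every vertex $v$, the ball $B(v,\varepsilon r_i)$ meets $C_i$ for infinitely many $i$.

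First I would carve $V(\Lambda_\square^d)$ at each scale $i$ into Voronoi-like cells centred at the points of $C_i$, breaking distance ties lexicographically using an auxiliary iid label in an $\Aut(\Lambda_\square^d)$-equivariant way. Since $C_i$ is maximal $r_i$-discrete, each such cell has diameter at most $2r_i$ and so is finite. Declare a vertex $v$ to be \emph{deep at level $i$} if its graph-distance to every level-$i$ Voronoi cell other than its own exceeds $k$; the recurrence guaranteed by the lemma, combined with the polynomial growth of $\Lambda_\square^d$ and the gap $r_{i+1} \gg r_i$, implies that almost surely every $v$ is deep at level $i$ for infinitely many $i$. Define the actual hierarchy clusters recursively from the bottom up: the level-$i$ clusters are the connected components of the set of vertices that are deep at level $i$ but not at any smaller level, and the parent of a level-$i$ cluster $C$ is the (unique, by construction) cluster at the next higher level whose associated Voronoi cell contains $C$.

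Finiteness of clusters is immediate from the diameter bound on the level-$i$ cells, the $k$-spaced property is forced by the depth margin $k \leq r_1/10$ enforced at every level, and finitariness holds because the level at which $v$ first becomes deep, together with its cell at that level, is determined by $\omega$ on some almost surely finite ball around $v$. The main obstacle is verifying that the third axiom in the definition of a hierarchy holds, i.e.\ that every cluster has a \emph{unique} parent and that the only cluster-adjacencies are parent--child adjacencies. This is precisely the combinatorial content of the Marks--Unger toast construction in \cite[Appendix~A]{marks2017borel}; it transfers to our setting once one checks that each of their Borel selection steps can be made locally and equivariantly from the iid labels, and I do not expect any new idea to be required beyond this bookkeeping and propagating the depth margin $k$ consistently up the hierarchy.
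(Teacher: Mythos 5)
Your high-level approach coincides with the paper's: establish the factor-of-iid version of Lemma~A.2 from \cite{marks2017borel} and then appeal to the remainder of their proof of Theorem~5.5, noting that the Borel choices there can be replaced by local, $\Aut(\Lambda_\square^d)$-equivariant choices based on the iid labels. However, the intermediate sketch you insert --- Voronoi cells around $C_i$, vertices ``deep at level $i$ but not at any smaller level'', connected components of these as level-$i$ clusters, parent defined as the next-level cell containing $C$ --- is not a faithful rendering of the Marks--Unger toast construction and, taken literally, does not yield a hierarchy. Voronoi cells at different scales do not nest, so a connected component $C$ of ``deep at level $i$ but not earlier'' vertices need not lie inside a single level-$(i+1)$ cell, making your parent assignment ill-defined; nor is there anything in your construction ruling out two same-level clusters being adjacent, which would violate axiom~3. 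The actual toast argument performs an inductive adjustment of region boundaries to force nesting and separate siblings, which is precisely the content your sketch skips. Since you explicitly defer that bookkeeping to the reference (as does the paper), the proposal is not wrong in substance, but the Voronoi paragraph should not be read as a substitute for the deferred argument.
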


While we were writing this paper up, Jan Grebík and Václav Rozhoň informed us that using techniques developed in \cite{conley2020borel}, they can prove the existence of (Borel) toasts in any Borel graph with connected components isomorphic to $\Lambda_\square^d$. This should allow one to construct Borel Schreier decorations on all such Borel graphs \cite{grebik2021toasts}.

\begin{proof}[Proof of Theorem~\ref{thm:Zd}]
We will first construct a balanced edge colouring with colours $c_1,\dots,c_d$, and then give orientation separately to each monochromatic cycle.
\begin{enumerate}[label=\textbf{Step \arabic* }]

\item\textbf{Spaced hierarchy.}
Let $\mathbf{H}$ be the finitary factor of iid hierarchy on $\Lambda_\square^d$ given by Corollary~\ref{cor:spaced_hierarchy_Z_d}.
and let us use Proposition~\ref{prop:hierarchy} to obtain from it $(\mathbf{H}_{2d-2,k},\eta:\mathbf{H}_{2d-2,k}\to[2d-2])$ where $k$ is a sufficiently large integer and $\mathbf{H}_{2d-2,k}$ is a $k$-spaced hierarchy.

\item\textbf{Edge colouring of even-numbered clusters.}
The edges of $\Lambda_\square^d$ travel in $d$ different directions. Let us observe that if we remove all edges in $d-2$ of them, we are left with disjoint copies of the square lattice.
Each cluster $C\in \mathbf{H}_{2d-2,k}$ such that $\eta(C)$ is odd
will be assigned an \emph{interface pattern} in direct analogy to subsection~\ref{subsection:square}. By an \emph{interface pattern}, we understand a balanced edge colouring in which the edge directions of $\Lambda_\square^d$ and the $d$ colours are in bijection. That is, after removing all edges of any $d-2$ directions (= all edges of any $d-2$ colours), we would end up with an interface pattern on disjoint copies of the square lattice.

In particular, each cluster $C\in \mathbf{H}_{2d-2,k}$ numbered $2d-2$ identifies its
ancestor $C^{(2d-2)+}$ (which is also numbered $2d-2$) and then all its
$2d-2$-th generation offspring $C=C_1,\dots,C_n$. The clusters $C_1,\dots,C_n$ will make a common choice of the interface pattern.

Subsequently, each $\{2,\dots,2d-4\}$-numbered cluster $C\in \mathbf{H}_{2d-2,k}$ identifies its nearest $2d-2$-numbered ancestor $C^{\rm above}: =C^{(2d-2-\eta(C))+}$ and should there be any, also its nearest $2d-2$-numbered offspring and their interface patterns. The role of the $\{2,\dots,2d-4\}$-numbered clusters, as well as of the odd-numbered clusters, is to provide a balanced transition between the patterns of the $2d-2$-numbered clusters.

If a $\{2,\dots,2d-4\}$-numbered cluster $C$ has no $2d-2$-numbered offspring then it will simply copy the pattern from $C^{\rm above}$.
Otherwise, we will assign interface patters so that for any cluster $C$ with $\eta(C)=2i\leq2d-4$, the directions of the colours $c_1,\dots,c_i$ in $C$ and $C^{\rm above}$ agree.
In particular, we will first assign patterns to 2-numbered clusters, then 4-numbered etc. all the way up to $2d-4$. A $\{2,\dots,2d-4\}$-numbered cluster $C$ shall identify the interface patterns of its grandchildren and of $C^{\rm above}$. If these agree in the direction of the colour $c_i$ then $C$ will simply adopt the pattern of its grandchildren without a change. If they disagree then $C$ will employ the interface pattern obtained from that of its grandchildren by swapping $c_i$ and the colour that currently travels in the direction that $c_i$ takes in $C^{\rm above}$.

\item\textbf{Edge colouring of odd-numbered clusters.}
Each cluster $C\in \mathbf{H}_{2d-2,k}$ such that $\eta(C)$ is odd
will be assigned an \emph{inner pattern}.
By an \emph{inner pattern}, we understand a balanced edge $d$-colouring in which all the edges in $d-2$ chosen directions have the same colour, while the remaining copies of the square lattice get a pattern consisting of monochromatic $C_4$-s of the remaining two colours (that is an inner pattern in the sense of subsection~\ref{subsection:square}).

In particular, each odd-numbered cluster $C\in \mathbf{H}_{2d-2,k}$ identifies its parent $C^+$ and its interface pattern and its children (should there be any) and their interface pattern.
By construction, these two interface patterns
are either the same or one can be obtained from the other by a single transposition, that is by swapping two colours.
In the former case and also when $C$ has no children, $C$ will randomly choose two colours and put monochromatic $C_4$-s to the grids spanned by their two directions. In the latter case, the $C_4$-s shall be built from the two colours on which the two interface patterns disagree. Anyhow, the colours and directions of the colours not chosen to form $C_4$-s will propagate through $C$ without change of direction.

\item\textbf{Boundaries between clusters.}
Whenever two clusters $C$ and $C^+$ meet, there are $d-2$ colours which simply travel in a fixed direction, which is moreover the same in $C$ and $C^+$. We will let these $d-2$ colours propagate in their directions also on all edges with one endpoint in $C$ and one in $C^+$. After disregarding these, we are left with copies of the square grid, and on every such copy, we see an inner pattern in the sense of subsection ~\ref{subsection:square} coming from one of the clusters and interface pattern from the other one. We amalgamate these into a balanced colouring as in subsection~\ref{subsection:square}.

Finally, we observe that for every $C\in \mathbf{H}_{2d-2,k}$ and $i\in[d]$, there is, with probability 1, an $n\in\mathbb{N}$ such that $C^{n+}$ has an inner pattern and one of the colours of its $C_4$-s is $c_i$. Therefore, there cannot be any infinite monochromatic path, and every colour class is almost surely a union of finite cycles.
To finish the construction, let each monochromatic cycle randomly choose one of the two strong orientations for itself.

\end{enumerate}

\end{proof}

\subsection{The triangular lattice}

Similarly as in the case of the higher-dimensional grids, once we have a spaced enough hierarchy, we can use the patterns developed for $\Lambda_\square$ to obtain a Schreier decoration of $T$.

\begin{proof}[Proof of Theorem~\ref{thm:archimedean_schreier} for $\Lambda=T$]
We will first construct a red-blue-green edge colouring and then as in the previous case, give orientation separately to each monochromatic cycle.
\begin{enumerate}[label=\textbf{Step \arabic* }]

\item\textbf{Spaced hierarchy.}
$T$ has got 3-fold symmetry and is self-matching, so Theorem~\ref{thm:matchlattice} tells us that its site-percolation critical probability is $\frac{1}{2}$ and percolation does not occur at criticality. Let $\mathbf{H}$ be the finitary factor of iid hierarchy on $T$ given by the percolation clusters, and let us use Proposition~\ref{prop:hierarchy} to obtain from it $(\mathbf{H}_{4,k},\eta:\mathbf{H}_{4,k}\to[4])$ where $k$ is a sufficiently large integer and $\mathbf{H}_{4,k}$ is a $k$-spaced hierarchy.

\item\textbf{Edge colouring of odd-numbered clusters.}
The edges of $T$ travel in three different directions. Let us observe that if we remove all edges in one of them, we are left with the square lattice.
Each cluster $C\in \mathbf{H}_{4,k}$ numbered 1 or 3 will be assigned an \emph{inner pattern}. By an \emph{inner pattern}, we understand a balanced edge 3-colouring in which all the edges in a chosen direction have the same colour, while the remaining square lattice gets a pattern consisting of monochromatic $C_4$-s of the remaining two colours (that is an inner pattern in the sense of subsection~\ref{subsection:square}). In particular, each cluster $C\in \mathbf{H}_{4,k}$ numbered 1 identifies its
grandparent $C^{++}$ (which is numbered 3) and then all its
grandchildren $C=C_1,\dots,C_n$. The clusters $C_1,\dots,C_n$ will make a common choice of the inner pattern. They randomly choose one of the three edge directions, colour all edges in the chosen direction green, and put blue and red $C_4$-s on the remaining square lattice.

Subsequently, each cluster $C\in \mathbf{H}_{4,k}$ numbered 3 identifies its grandchildren (should there be any) and their inner pattern and its grandparent $C^{++}$ and its inner pattern. Then it chooses the direction not chosen by either of the patterns and
colours all its edges blue.
If there is a choice between two directions, siblings (i.e.~clusters with the same parent $C^+$) make it together.

Every odd-numbered cluster $C$ will apply its chosen inner pattern to all edges which are inside $\partial C$, but outside the boundaries of its children.

\item\textbf{Edge colouring of even-numbered clusters.}
Each cluster $C\in \mathbf{H}_{4,k}$ numbered 2 or 4 will be assigned an \emph{interface pattern} in direct analogy to subsection~\ref{subsection:square}. By an \emph{interface pattern}, we understand a balanced edge colouring in which the edge directions of $T$ and the three colours are in bijection. That is, after removing all edges of any of the directions (= all edges of any of the colours), we would end up with an interface pattern on the square lattice.

In particular, each even-numbered cluster $C\in \mathbf{H}_{4,k}$ identifies its parent $C^+$ and its inner pattern and its children (should there be any) and their inner pattern. By construction, these two inner patterns disagree both in their chosen direction and their chosen colour. To the direction of $C^+$, $C$ will assign the colour chosen by $C^+$ (i.e.~green if $C^+$ is numbered 1 and blue if it is numbered 3). To the direction of the pattern of its children, $C$ will assign the colour chosen by the children. This fully determines the interface pattern of $C$ (if $C$ has no children, then it shall randomly choose one of the two suitable patterns).

\begin{figure}
\centering
\begin{subfigure}[p]{0.45\textwidth}
    \centering
    \begin{tikzpicture}[scale=1.3]
        %\node (0;0) at (0,0) {00};
        \coordinate (0;0) at (0,0); 
        
        \foreach \c in {1,...,2}{%  
        \foreach \i in {0,...,5}{% 
        \pgfmathtruncatemacro\j{\c*\i}
        %\node[circle,minimum width=4pt,inner sep=0pt] (\c;\j) at (60*\i:\c){\c;\j};  
        \coordinate (\c;\j) at (60*\i:\c);  
        } }
        
        \foreach \i in {0,2,...,10}{%
        \pgfmathtruncatemacro\j{mod(\i+2,12)}% 
        \pgfmathtruncatemacro\k{\i+1}
         % midpoints
        %\node (2;\k) at ($(2;\i)!.5!(2;\j)$)  {2;\k} ;    }
        \coordinate (2;\k) at ($(2;\i)!.5!(2;\j)$);    }
        
        \path [draw=green, thin] (2;4) edge (2;2);
        \path [draw=green, thin] (2;5) edge (2;1);
        \path [draw=green, thin] (2;6) edge (2;0);
        \path [draw=green, thin] (2;7) edge (2;11);
        \path [draw=green, thin] (2;8) edge (2;10);
        
        \path [draw=green, red] (2;5) edge (2;4);
        \path [draw=green, red] (1;3) edge (1;2);
        \path [draw=green, red] (1;4) edge (0;0);
        \path [draw=green, red] (2;9) edge (1;5);
        
        \path [draw=green, red] (1;1) edge (2;2);
        \path [draw=green, red] (1;0) edge (2;1);
        \path [draw=green, red] (2;11) edge (2;0);
        
        \path [draw=green, red] (1;1) edge (2;3);
        \path [draw=green, red] (0;0) edge (1;2);
        \path [draw=green, red] (1;4) edge (1;3);
        \path [draw=green, red] (2;8) edge (2;7);
        
        \path [draw=green, red] (2;0) edge (2;1);
        \path [draw=green, red] (2;11) edge (1;0);
        \path [draw=green, red] (2;10) edge (1;5);
        
        \path [draw=green, blue] (2;6) edge (2;5);
        \path [draw=green, blue] (2;7) edge (1;3);
        \path [draw=green, blue] (2;8) edge (1;4);
        
        \path [draw=green, blue] (1;2) edge (2;3);
        \path [draw=green, blue] (0,0) edge (1;1);
        \path [draw=green, blue] (1;5) edge (1;0);
        \path [draw=green, blue] (2;10) edge (2;11);
        
        \path [draw=green, blue] (1;2) edge (2;4);
        \path [draw=green, blue] (1;3) edge (2;5);
        \path [draw=green, blue] (2;7) edge (2;6);
        
        \path [draw=green, blue] (2;1) edge (2;2);
        \path [draw=green, blue] (1;0) edge (1;1);
        \path [draw=green, blue] (1;5) edge (0;0);
        \path [draw=green, blue] (2;9) edge (1;4);
    \end{tikzpicture}     
    \caption{An example of an inner pattern }
    \label{fig:inner_pattern}
\end{subfigure}\qquad%
\begin{subfigure}[p]{0.45\textwidth}
    \centering
    \begin{tikzpicture}[scale=1.3]
        \pgfmathsetmacro{\c}{2} 
        \foreach \i in {0,...,5}{% 
        \pgfmathtruncatemacro\j{\c*\i}
        %corners
        \coordinate (\c;\j) at (60*\i:\c);
        } 
        \foreach \i in {0,2,...,10}{%
        
        \pgfmathtruncatemacro\j{mod(\i+2,12)}% 
        \pgfmathtruncatemacro\k{\i+1}
         % midpoints
        \coordinate (2;\k) at ($(2;\i)!.5!(2;\j)$) ;    }
        
        \path [draw=red, thin] (2;6) edge (2;4);
        \path [draw=red, thin] (2;7) edge (2;3);
        \path [draw=red, thin] (2;8) edge (2;2);
        \path [draw=red, thin] (2;9) edge (2;1);
        \path [draw=red, thin] (2;10) edge (2;0);
        
        \path [draw=green, thin] (2;4) edge (2;2);
        \path [draw=green, thin] (2;5) edge (2;1);
        \path [draw=green, thin] (2;6) edge (2;0);
        \path [draw=green, thin] (2;7) edge (2;11);
        \path [draw=green, thin] (2;8) edge (2;10);
        
        \path [draw=blue, thin] (2;8) edge (2;6);
        \path [draw=blue, thin] (2;9) edge (2;5);
        \path [draw=blue, thin] (2;10) edge (2;4);
        \path [draw=blue, thin] (2;11) edge (2;3);
        \path [draw=blue, thin] (2;0) edge (2;2);
    \end{tikzpicture} 
    \caption{An example of an interface pattern }
    \label{fig:interface}
\end{subfigure}
\label{fig:triangular_patterns}
\caption{Examples of patterns used in odd- and even-numbered clusters}
\end{figure}
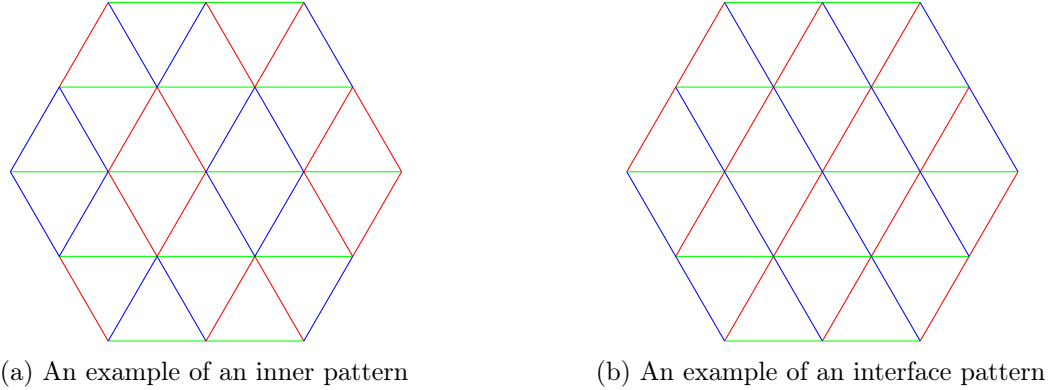

\item\textbf{Boundaries.}
Let $\partial C$, $C\in \mathbf{H}_{4,k}$ be the cluster boundaries as per the definition in subsection~\ref{subsec:percolationplanar}. For every $C\in \mathbf{H}_{4,k}$, we want the boundary $\partial C$ between $C$ and $C^+$ to travel only in the two directions not chosen by the odd-numbered cluster.
Suppose that $uv\in\partial C$ travels in the undesired direction and $x, y$ are the common neighbours of $u$ and $v$. Then by the definition of a boundary, exactly one of $x, y$ is in $C$ and one is in $C^+$ (without loss of generality, $x\in C^+$). We will replace every such $uv$ in the boundary with the pair $ux, xv$. As $\mathbf{H}_{4,k}$ is $k$-spaced and $k\geq3$, such changes cannot cause that boundaries of two different clusters would touch (share a vertex).

Now for a cluster $C$, there is exactly one colour $c$ such that there is a direction in which all the edges of both $C$ and $C^+$ have colour $c$. Moreover, the boundary $\partial C$ never travels in this direction, so after forgetting the edges of colour $c$, we are left with two neighbouring clusters in the square lattice, one of which has got the inner pattern and the other the interface pattern. We proved in subsection~\ref{subsection:square} that there is a 2-colouring of the boundary which amalgamates the two patterns into a balanced 2-colouring.

Finally, as in subsection~\ref{subsection:square}, there are no infinite monochromatic paths a.s. To finish the construction, let each monochromatic cycle randomly choose one of the two strong orientations for itself.

\end{enumerate}

\end{proof}

\subsection{The Kagomé and the $(3,4,6,4)$ lattice}

The remaining two Archimedean lattices differ from the first two ones in that not all faces are bounded by cycles of the same length (the lattice $(3,4,6,4)$ is not even edge-transitive). This slightly increased heterogeneity makes the proofs much easier, and in the latter case we even obtain an upper bound on the length of monochromatic cycles, meaning the resulting decoration is a Schreier graph of actions of many more groups than just $F_2$.

\begin{proof}[Proof of Theorem~\ref{thm:archimedean_schreier} for $\Lambda=K$]
Split the lattice into finite clusters organised in a hierarchy $\mathbf{H}$ as in Lemma~\ref{lemma:withinboundaries}. Each finite cluster picks a pattern composed of monochromatic (red and blue) triangles. There are two ways to randomly place this pattern on the Kagom\'e lattice. Consider the boundaries $\partial C$ where $C\in \mathbf{H}$. If a boundary travels on two out of the three edges of a given triangle $abc$, replace the two edges in the boundary (say $ab,bc\in\partial C$) by the third one ($ac$) -- as a side effect, each boundary becomes a union of \emph{vertex}-disjoint cycles.

Let each cluster $C$ apply its pattern to all non-boundary edges with at least one endpoint in $C$. If two neighbouring clusters $C$ and $C^+$ agree in their pattern, simply propagate it to $\partial C$ as well. If $C$ and $C^+$ disagree, we will colour the boundary as follows.
Suppose $uv$ is in $\partial C$ and $w$ is the common neighbour of $u$ and $v$. Then by construction, either all three edges $uv, vw, wu$ are in the boundary (in which case we will propagate the pattern of $C^+$ to them) or neither $vw$ nor $wu$ is in the boundary. In the latter case, $vw$ and $wu$ follow the same pattern and are in the same triangle, so they are assigned the same colour. Let $uv$ get the opposite colour.

As remarked earlier, every vertex $x\in C$ is incident to either two or none edges from $\partial C$. If it is incident to none and also if the patterns of $C$ and $C^+$ agree,
the colouring at $x$ is an inner pattern, which is balanced. If $x$ is incident to two boundary edges $xy$ and $xz$ and $yz\in E(K)$, the same reasoning applies. Finally, if the patterns disagree and $y$ and $z$ are not adjacent, let $N(y)$ be the common neighbour of $y$ and $x$ and $N(z)$ the common neighbour of $z$ and $x$. Then $y$ has the opposite colour than $N(y)$ by the definition of the colouring on boundaries, and similarly $z$ has the opposite colour than $N(z)$. Hence the amalgamated colouring is balanced at every vertex.

As in the previous cases, the colour classes are thus unions of finite cycles, and we let each monochromatic cycle randomly pick one of the two possible strong orientations to finish the decoration. 
\end{proof}

\begin{figure}[h!]
\centering
    \begin{subfigure}[b]{.4\linewidth}
        \centering
        \begin{tikzpicture}[node distance=1cm,
        main node/.style={circle,minimum width=1.9pt, fill, inner sep=0pt,outer sep = 0pt},
        red node/.style={circle,minimum width=2.2pt, fill,red, inner sep=0pt,outer sep = 0pt},
        blue node/.style={circle,minimum width=2.2pt, fill,blue, inner sep=0pt,outer sep = 0pt},
        square node/.style={draw,regular polygon,regular polygon sides=4,fill,inner sep=0.9pt,outer sep=0.1pt},
        triangle node/.style={draw,regular polygon,regular polygon sides=3,fill,inner sep=0.8pt,outer sep=0.1pt},
        scale=1.5]   
        \node[main node] (00) at (0,0 ) [label=right:{ $x$}] {};
        \node[main node] (f1) at (60:1) [label=right:{ $y$}] {} ;
        \node[main node] (f2) at (120:1) {};
        \node[main node] (a1) at (240:1) {};
        \node[main node] (a2) at (300:1) [label=right:{ $z$}]{};
        
        \path [draw=gray, thin] (00) edge (f1);
        \path [draw=red, dashed, thin] (00) edge (f2);
        \path [draw=red, dashed, thin] (f1) edge (f2);
        
        \path [draw=blue, thin] (00) edge (a1);
        \path [draw=gray, thin] (00) edge (a2);
        \path [draw=blue, thin] (a1) edge (a2);
        
        \end{tikzpicture}
        \caption{Both triangles incident to $x$ are in the same cluster.}
    \end{subfigure}\qquad%
    \begin{subfigure}[b]{.4\linewidth}
    \centering
        \begin{tikzpicture}[node distance=1cm,
        main node/.style={circle,minimum width=1.9pt, fill, inner sep=0pt,outer sep = 0pt},
        red node/.style={circle,minimum width=2.2pt, fill,red, inner sep=0pt,outer sep = 0pt},
        blue node/.style={circle,minimum width=2.2pt, fill,blue, inner sep=0pt,outer sep = 0pt},
        square node/.style={draw,regular polygon,regular polygon sides=4,fill,inner sep=0.9pt,outer sep=0.1pt},
        triangle node/.style={draw,regular polygon,regular polygon sides=3,fill,inner sep=0.8pt,outer sep=0.1pt},
        scale=1.5]   
        \node[main node] (00) at (0,0 ) [label=right:{ $x$}] {};
        \node[main node] (f1) at (60:1) [label=right:{ $y$}] {};
        \node[main node] (f2) at (120:1) {};
        \node[main node] (a1) at (240:1) [label=left:{ $z$}] {};
        \node[main node] (a2) at (300:1) {};
        
        \path [draw=gray, thin] (00) edge (f1);
        \path [draw=red, dashed, thin] (00) edge (f2);
        \path [draw=red, dashed, thin] (f1) edge (f2);
        
        \path [draw=gray, thin] (00) edge (a1);
        \path [draw=red, dashed, thin] (00) edge (a2);
        \path [draw=red, dashed,  thin] (a1) edge (a2);
        
        \end{tikzpicture}  
        \caption{One triangle incident to $x$ is in $C$ and one in $C^+$.}
        
    \end{subfigure}
\caption{Kagom\'e lattice -- two cases that can occur at a boundary vertex}
\label{fig:Kagome_boundary}
\end{figure}
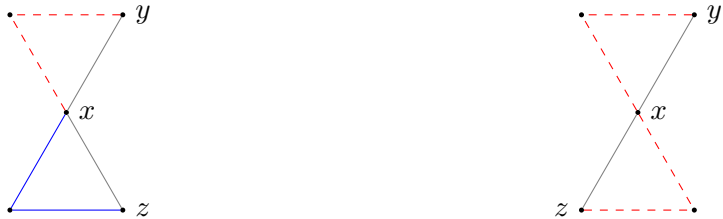

\begin{proof}[Proof of Theorem~\ref{thm:archimedean_schreier} for $\Lambda=(3,4,6,4)$]
The $(3,4,6,4)$ lattice is not edge-transitive which makes constructing a Schreier decoration easy in this case. Every edge is either part of a triangle or a hexagon, but not both.

Therefore, we can simply colour all triangles red and all hexagons blue. To complete the Schreier decoration, we choose a random strong orientation for each triangle and hexagon independently.
\end{proof}

Constructing a factor of iid perfect matching on the $(3,4,6,4)$ lattice is similarly easy: each hexagon can choose one of its two perfect matchings independently at random, and these matchings together form a perfect matching of the whole lattice.

\section{Ramifications for proper edge colourings and perfect matchings}\label{section:perfmatch_and_properedgecolr}

Proper edge $2d$-colourings encode actions of the group $(\mathbb{Z}/2\mathbb{Z})^{*2d}$, the $2d$-fold free product of $(\mathbb{Z}/2\mathbb{Z})$ with itself, just like Schreier decorations encode actions of the free group $F_d$. Both groups have $T_{2d}$ as their standard Cayley graph, and $T_{2d}$ is the universal cover of any $2d$-regular graph. In this sense, looking for these structures on $2d$-regular graphs is equally natural, but we focus more on Schreier decorations because all $2d$-regular graphs have a (deterministic) Schreier decoration, while this is not the case for proper edge $2d$-colourings.
When searching for these structures as factors of iid, however, we do not know of a single transitive example, where one exists and the other does not.

\subsection{Proper edge $2d$-colourings and perfect matchings}\label{subsec:properedgeperfmatch}
The absence of infinite monochromatic paths in our Schreier decorations allows us to immediately obtain proper edge $2d$-colourings whenever the graph in question is bipartite.
\begin{corollary}[of Theorems~\ref{thm:archimedean_schreier} and~\ref{thm:Zd}]
For every $d\geq2$, there is a finitary $Aut(\Lambda_\square^d)$-factor of iid which is a proper edge $2d$-colouring of $\Lambda_\square^d$ almost surely. Subsequently, there is a finitary $Aut(\Lambda_\square^d)$-factor of iid which is a perfect matching of $\Lambda_\square^d$ almost surely \cite{Adam}.
\end{corollary}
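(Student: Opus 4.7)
The plan is to refine the Schreier decoration provided by Theorems~\ref{thm:archimedean_schreier} and~\ref{thm:Zd}, exploiting the fact that $\Lambda_\square^d$ is bipartite. First I would apply the relevant theorem to obtain a finitary $\Aut(\Lambda_\square^d)$-factor of iid Schreier decoration $\Phi$ with colour classes $c_1,\dots,c_d$ such that, almost surely, no colour class contains an infinite monochromatic path. Forgetting the orientations, each $c_i$ is a $2$-factor and therefore a disjoint union of cycles; combined with the bipartiteness of $\Lambda_\square^d$, almost surely every such cycle is finite and of even length.

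Next I would split each Schreier colour in two. Every finite even cycle $C$ has exactly two proper $2$-edge-colourings, namely the two alternations along $C$. Using independent fresh labels (available by decomposing each $[0,1]$-coordinate into countably many uniform $[0,1]$-variables, as discussed in Section~\ref{section:basics}), I let every such finite cycle make a uniformly random choice between its two alternating $2$-colourings in an $\Aut$-equivariant way; this is permissible because each cycle is a finite $\Aut$-invariant substructure (cf.\ the joint random decisions on finite components used throughout Sections~\ref{section:clusters_and_hierarchy} and~\ref{section:archimedean}). Performing this independently across all monochromatic cycles of $c_1,\dots,c_d$ refines the Schreier decoration into a proper edge $2d$-colouring of $\Lambda_\square^d$. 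The resulting process is finitary because the new colouring around any vertex $v$ is determined by the two monochromatic cycles through $v$, which are almost surely finite and locally determined by $\Phi$.

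For the perfect matching claim, it then suffices to extract any single colour class of this proper edge $2d$-colouring. Equivalently, one can directly pick, within each finite even cycle of $c_1$, one of the two perfect matchings of the cycle uniformly at random: since $c_1$ is a $2$-factor of $\Lambda_\square^d$, the union of these cycle matchings is a perfect matching of the whole lattice. This is manifestly a finitary $\Aut(\Lambda_\square^d)$-factor of iid, recovering the result attributed to \cite{Adam}.

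Since the argument reduces to the observation that bipartiteness upgrades the finite monochromatic cycles supplied by Theorems~\ref{thm:archimedean_schreier} and~\ref{thm:Zd} to \emph{even} cycles, I do not expect any substantial obstacle beyond correctly packaging the per-cycle random $2$-colouring as an $\Aut(\Lambda_\square^d)$-factor; this is the only place where care is needed, and it is handled by the standard finite-substructure device used repeatedly in the preceding sections.
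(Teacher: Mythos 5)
Your argument is correct and follows exactly the same route as the paper: take the factor-of-iid Schreier decoration from Theorems~\ref{thm:archimedean_schreier} and~\ref{thm:Zd}, note that bipartiteness forces all the finite monochromatic cycles to be even, split each colour $c_i$ into $c_i^{\rm light}$ and $c_i^{\rm dark}$ by a random alternating $2$-colouring on each cycle, and take one of the resulting $2d$ colour classes as the perfect matching.
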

\begin{proof}
The factors from Theorems~\ref{thm:archimedean_schreier} and~\ref{thm:Zd} partition the edges of $\Lambda_\square^d, d\geq2$ into finite monochromatic cycles. $\Lambda_\square^d$ is bipartite, so all of these cycles are even. To obtain a proper $2d$-colouring, let each cycle of colour $c_i, i\in[d]$ choose randomly one of the two proper 2-colourings of its edges by $c_i^{\rm light}$ and $c_i^{\rm dark}$.

By choosing one colour class (say $c_1^{\rm light}$), we obtain the second statement.
\end{proof}

In fact, one can now obtain many more corollaries simply by noticing when graphs have got a decomposition into several locally identifiable copies of $\Lambda_\square^d$. The square lattice with diagonals added is an example of a transitive graph that decomposes into three copies of the square lattice.

\begin{corollary}
There is an $\Aut(\Lambda_\boxtimes)$-factor of  $\left([0,1]^{V(\Lambda_\boxtimes)},\mu_\boxtimes\right)$ which is a Schreier decoration a.s.\ Moreover, it has almost surely no infinite monochromatic paths. Subsequently, there is an $\Aut(\Lambda_\boxtimes)$-factor of $\left([0,1]^{V(\Lambda_\boxtimes)},\mu_\boxtimes\right)$ which is a perfect matching of $\Lambda_\boxtimes$ a.s.
\end{corollary}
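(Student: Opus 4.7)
The plan is to exploit the natural decomposition $E(\Lambda_\boxtimes)=E_{\mathrm{ax}}\sqcup E_{\mathrm{diag}}$, where $E_{\mathrm{ax}}$ consists of the axis-aligned edges of the underlying $\mathbb{Z}^2$ lattice and $E_{\mathrm{diag}}$ of the face diagonals. This decomposition is $\Aut(\Lambda_\boxtimes)$-invariant: axis-aligned edges are intrinsically distinguishable from diagonals by the number of common neighbours of their endpoints (four versus two). The subgraph $(V(\Lambda_\boxtimes),E_{\mathrm{ax}})$ is a copy of $\Lambda_\square$. Since every diagonal preserves the parity of $x+y$, the subgraph $(V(\Lambda_\boxtimes),E_{\mathrm{diag}})$ splits into two vertex-disjoint copies of $\Lambda_\square$, one on each parity class; translations by odd vectors (elements of $\Aut(\Lambda_\boxtimes)$) can swap these two copies, so they must be treated identically.

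First I would split every iid label $\omega(v)\in[0,1]$ into two independent uniform labels $\omega_{\mathrm{ax}}(v),\omega_{\mathrm{diag}}(v)$. Applying Theorem~\ref{thm:archimedean_schreier} to $(V(\Lambda_\boxtimes),E_{\mathrm{ax}})$ using the labels $\omega_{\mathrm{ax}}$ produces a finitary factor of iid Schreier decoration of this copy of $\Lambda_\square$ with two colours $c_1,c_2$ and no infinite monochromatic path almost surely. I would then apply the same algorithm, in parallel on the two diagonal copies of $\Lambda_\square$, using the labels $\omega_{\mathrm{diag}}$ and two fresh colours $c_3,c_4$. At a vertex $v$, the four axis-aligned edges contribute one incoming and one outgoing edge of each of $c_1,c_2$, while the four diagonal edges (all lying in $v$'s parity copy) contribute one incoming and one outgoing edge of each of $c_3,c_4$. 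The union is therefore a Schreier decoration of the $8$-regular graph $\Lambda_\boxtimes$, and every monochromatic component lies entirely within one of the three $\Lambda_\square$ copies, so no infinite monochromatic path exists.

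For the perfect matching, I would exploit that the axis-aligned $\Lambda_\square$ is bipartite, hence the $c_1$-coloured subgraph is a disjoint union of finite \emph{even} cycles. Let each such cycle independently pick (via an additional iid label at one of its vertices) one of its two proper $2$-edge-colourings and retain the "light" half. The retained edges match every vertex of $\Lambda_\boxtimes$ exactly once via an axis-aligned edge, yielding the desired factor of iid perfect matching.

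The main obstacle, and the point that requires care, is the $\Aut(\Lambda_\boxtimes)$-equivariance when $\Aut(\Lambda_\boxtimes)$ can permute the two diagonal copies of $\Lambda_\square$. This is resolved by the observation that the algorithm furnished by Theorem~\ref{thm:archimedean_schreier} is $\Aut(\Lambda_\square)$-equivariant: running it identically on both diagonal copies (and on $E_{\mathrm{ax}}$, whose automorphisms as a subgraph of $\Lambda_\boxtimes$ form a subgroup of $\Aut(\Lambda_\square)$) makes the whole construction equivariant under any element of $\Aut(\Lambda_\boxtimes)$, whether it preserves or swaps the two diagonal components.
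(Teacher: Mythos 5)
Your argument is correct and is exactly the route the paper intends: the remark immediately preceding this corollary observes that $\Lambda_\boxtimes$ decomposes into three locally identifiable copies of $\Lambda_\square$ (the axis-aligned copy plus the two parity classes of diagonals), and your proof fleshes out the decomposition, the colour bookkeeping, the equivariance under parity-swapping automorphisms, and the even-cycle argument for the perfect matching, all in the same spirit as the paper's proof of the preceding corollary on $\Lambda_\square^d$.
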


If care is taken while constructing a Schreier decoration, bipartiteness is, however, not necessary not even when $G$ is not made up of $\Lambda_\square^d$-s.
\begin{corollary}[of Proposition~\ref{prop:HxP_whenH_hasPM}]
Let $H$ be a finite $2d-2$-regular graph whose chromatic index is $\chi'(H)=2d-2$. Then there is a finitary $Aut(H\times P)$-factor of iid which is a proper edge $2d$-colouring of $H\times P$ almost surely.
\end{corollary}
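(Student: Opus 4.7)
The plan is to piggyback on the construction of Proposition~\ref{prop:HxP_whenH_hasPM}. Since $H$ is $(2d-2)$-regular and $\chi'(H)=2d-2$, the graph $H$ is class 1 and decomposes into $2d-2$ perfect matchings $M_1,\dots,M_{2d-2}$. In particular, $H$ admits a perfect matching, so Proposition~\ref{prop:HxP_whenH_hasPM} applies (taking the distinguished matching to be $M:=M_1$). That proposition yields a finitary factor of iid partitioning $E(H\times P)$ into (i)~a 2-factor $F$ all of whose cycles are finite and even, and (ii)~the complement $E(H\times P)\setminus F$, whose connected components are each isomorphic either to $H$ or to a component of $(H\setminus M_1)\times K_2$.

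First, since every cycle of $F$ is finite and even, each such cycle has exactly two proper $2$-edge-colorings with a palette $\{\alpha_1,\alpha_2\}$; let each cycle independently pick one uniformly at random (using the iid labels to break ties in an $\mathrm{Aut}(H\times P)$-equivariant way). Next, let each finite component $X$ of the complement independently pick a uniformly random proper edge coloring using a disjoint palette $\{\beta_1,\dots,\beta_{2d-2}\}$. The resulting global coloring uses $2+(2d-2)=2d$ colors, and it is proper because $F$ and its complement share no edges and their respective palettes are disjoint; finitariness of the whole procedure is inherited from that of Proposition~\ref{prop:HxP_whenH_hasPM} together with the fact that each added random decision is local to a single finite cycle or component.

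The only nontrivial point to justify is that every such component $X$ actually admits a proper $(2d-2)$-edge-coloring. For $X\cong H$ this is exactly the hypothesis $\chi'(H)=2d-2$. For a component $X$ of $(H\setminus M_1)\times K_2$, the key observation is that $H\setminus M_1=M_2\cup\cdots\cup M_{2d-2}$ is itself class 1 with $\chi'(H\setminus M_1)=2d-3$: color the edges inside each of the two copies of $H\setminus M_1$ inside $(H\setminus M_1)\times K_2$ using $\{\beta_1,\dots,\beta_{2d-3}\}$ via the decomposition $M_2,\dots,M_{2d-2}$, and color every bridge edge $(u,v_i)(u,v_{i+1})$ with the single remaining color $\beta_{2d-2}$ (these bridges form a perfect matching of $V(H)\times\{v_i,v_{i+1}\}$, so this is a proper assignment). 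Restricting to the component $X$ shows $\chi'(X)\leq 2d-2$, so the uniform choice above is over a nonempty finite set.

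The main (and very minor) obstacle is simply checking the chromatic index bound on the components of $(H\setminus M_1)\times K_2$; this is exactly where the class 1 hypothesis on $H$ is used, as it guarantees that deleting one color-class perfect matching leaves a class 1 subgraph of one smaller degree, so the extra bridge matching can absorb the remaining color.
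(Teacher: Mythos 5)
Your proposal is correct and follows essentially the same route as the paper: invoke Proposition~\ref{prop:HxP_whenH_hasPM} with a matching $M$ taken from a colour class of a proper edge $(2d-2)$-colouring of $H$, properly 2-colour the finite even cycles of the resulting 2-factor, and properly $(2d-2)$-colour each leftover component, checking that both $H$ and the components of $(H\setminus M)\times K_2$ indeed have chromatic index $2d-2$. The paper gives the same explicit colouring of $(H\setminus M)\times K_2$ (layers with $2d-3$ colours, bridges with one more), so there is no substantive difference.
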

\begin{proof}
Let us in the proof of Proposition~\ref{prop:HxP_whenH_hasPM} always choose a matching $M$ which is given by a colour class of a proper edge colouring.
We made sure in the proof to spell out that all cycles of colour $c_1$ have even length. On each of those, pick randomly one of the two proper edge colourings with colours $c_1^{\rm light}$ and $c_1^{\rm dark}$.

Now on each connected component left after removing the decorated edges which is isomorphic to $H$, pick randomly a proper edge colouring with colours $c_3,\dots,c_{2d}$. On each component isomorphic to $(H\setminus M)\times\{v_i,v_{i+1}\}$,
let the $|V(H)|$ edges between $(H\setminus M)\times\{v_i\}$ and $(H\setminus M)\times\{v_{i+1}\}$ all have colour $c_3$, while we properly colour both $(H\setminus M)\times\{v_i\}$  and $(H\setminus M)\times\{v_{i+1}\}$ with $c_4,\dots,c_{2d}$.
\end{proof}

We can again obtain a perfect matching of $H\times P$ by choosing one of the colour classes, but there is already the trivial construction of simply picking a perfect matching on each of the $P$ copies of $H$.

\subsection{Line graphs}

\begin{proposition}
Let $G$ be a $2d$-regular graph. Then if $G$ admits a factor-of-iid Schreier decoration, so does its line graph $L(G)$.
\end{proposition}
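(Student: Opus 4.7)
The plan is to exploit the canonical clique decomposition of $L(G)$: for each $x \in V(G)$, the $2d$ edges of $G$ incident to $x$ form the vertex set of a clique $K_x \cong K_{2d}$ in $L(G)$; these cliques partition $E(L(G))$, and each vertex $e = uv$ of $L(G)$ lies in exactly two of them, $K_u$ and $K_v$. Consequently, a Schreier decoration of $L(G)$ with $2d - 1$ colours is precisely an oriented $1$-factorization of $K_{2d}$ specified on each clique, subject to the single compatibility condition that at every $e = uv$ and every colour $c'$, the $c'$-coloured edge at $e$ in $K_u$ is oriented oppositely to the $c'$-coloured edge at $e$ in $K_v$, so that $e$ has one incoming and one outgoing $c'$-coloured edge altogether.

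My approach is as follows. Using the given Schreier decoration $\Phi_G$ of $G$, I would label the vertices of each clique $K_x$ bijectively by $\{\pm 1, \ldots, \pm d\}$, putting $+c$ at the outgoing and $-c$ at the incoming colour-$c$ edge at $x$. An edge $e = uv$ of colour $c_e$ then has label $+c_e$ in $K_u$ and $-c_e$ in $K_v$, and the two labels are exchanged by the fixed-point-free involution $\iota(\pm c) := \mp c$. I will fix, once and for all, any $1$-factorization $\mathcal{M}_1, \ldots, \mathcal{M}_{2d-1}$ of $K_{2d}$ on the label set, and equip each matching $\mathcal{M}_c$ with an orientation such that (i) within each edge $\{a, b\} \in \mathcal{M}_c$ the two endpoints receive opposite roles (one tail, one head), and (ii) for every $c' \in [d]$, the labels $+c'$ and $-c'$ receive opposite roles in $\mathcal{M}_c$. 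Transferring this oriented $1$-factorization to every $K_x$ via the labelling defines a colouring and orientation of $L(G)$. The compatibility condition at $e = uv$ in colour $c'$ is then immediate from (ii) applied at $+c_e$ and $-c_e$: the $\mathcal{M}_{c'}$-edge incident to $+c_e$ in $K_u$ has orientation opposite to that of the $\mathcal{M}_{c'}$-edge incident to $-c_e$ in $K_v$, giving opposite orientations at $e$. Since the construction is a deterministic function of $\Phi_G$, the result is a factor of iid, and finitary whenever $\Phi_G$ is.

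The main obstacle will be producing, for each $\mathcal{M}_c$, an orientation satisfying (i) and (ii). This is equivalent to finding a function $\omega\colon \{\pm 1, \ldots, \pm d\} \to \{+1, -1\}$ with $\omega(a) = -\omega(b)$ for every edge $\{a, b\} \in \mathcal{M}_c$ and $\omega(a) = -\omega(\iota(a))$ for every label $a$---equivalently, a proper $2$-colouring of the $2$-regular multigraph $\mathcal{M}_c \cup M_\iota$, where $M_\iota := \{\{+c', -c'\} : c' \in [d]\}$ is the diagonal matching. The connected components of this multigraph are cycles that alternate between $\mathcal{M}_c$-edges and $M_\iota$-edges (or doubled edges of length $2$ when $\mathcal{M}_c$ and $M_\iota$ share an edge), all of even length, so a proper $2$-colouring always exists. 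This even-cycle observation is the key technical content; given it, orienting each $\mathcal{M}_c$ by declaring $a \to b$ whenever $\omega(a) = +1$ and $b = \mathcal{M}_c$-partner of $a$ yields the required Schreier decoration of $L(G)$ as a factor of iid of $\Phi_G$.
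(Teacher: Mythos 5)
Your construction is the same as the paper's: translate the Schreier decoration of $G$ into ``in/out'' labels on the vertices of each clique $K_x \cong K_{2d}$ of $L(G)$, fix a template proper edge $(2d-1)$-colouring of $K_{2d}$, and orient each colour class so that the two partners $c^{\rm in}, c^{\rm out}$ play opposite roles, so that the decoration glues consistently across the two cliques containing any vertex of $L(G)$. You go slightly further than the paper by actually \emph{proving} such a template orientation exists (via the observation that $\mathcal{M}_c \cup M_\iota$ decomposes into even alternating cycles, hence is properly $2$-colourable), a step the paper asserts without justification.
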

\begin{proof}
If $d=1$ then $G=L(G)$, so the statement is a tautology.

If $d\geq2$ then there is a 1-to-1 correspondence between vertices of $G$ and the cliques $K_{2d}$ in $L(G)$. Also every vertex $v$ in $L(G)$ is in exactly two cliques $K_{2d}$ because it has two endpoints in $G$.

Suppose now that $K$ is isomorphic to $K_{2d}$ and its vertices are $c_1^{\rm in}, c_1^{\rm out},\dots, c_d^{\rm in}, c_d^{\rm out}$. Let us fix a proper edge $2d-1$-colouring of $K$ with colours $c'_1,\dots,c'_{2d-1}$. Then put on top an orientation such that for all $i\in[d], j\in[2d-1]$, the edge of colour $c'_j$ incident to $c_i^{\rm in}$ is oriented towards $c_i^{\rm in}$ if and only if the edge of colour $c'_j$ incident to $c_i^{\rm out}$ is oriented from $c_i^{\rm out}$.
This decorated $K$ will serve as a template for decorating the cliques of $L(G)$.

Indeed, the Schreier decoration of $G$ gives rise to a vertex $d$-colouring of $L(G)$ with colours $c_1,\dots,c_d$ such that moreover, every vertex of $L(G)$ labels one of the cliques it is in as `in' and the other as `out'. Also every clique $K_{2d}$ in $L(G)$ has vertices which are \emph{relatively to this clique} labelled $c_1^{\rm in}, c_1^{\rm out},\dots, c_d^{\rm in}, c_d^{\rm out}$. Given this vertex decoration, let every $K_{2d}$ in $L(G)$ decorate its edges as dictated by the template $K$. Since every $c_i^{\rm in}$ is $c_i^{\rm out}$ in its other clique, this gives a Schreier decoration of $L(G)$.
\end{proof}

Interestingly, we cannot use the same strategy to show that a proper edge $2d$-colouring of $G$ implies a proper edge $4d-2$-colouring of $L(G)$. This is exactly because every vertex $v$ of the line graph would get a decoration which is the same as viewed from either of the two cliques $v$ is in. However, when $d$ is even, we get the following.

\begin{proposition}
Let $G$ be a $2d$-regular graph where $d$ is an even positive integer. Then if $G$ admits a factor-of-iid balanced orientation, its line graph $L(G)$ has a factor-of-iid perfect matching.
\end{proposition}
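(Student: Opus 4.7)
The plan is to use the balanced orientation of $G$ as scaffolding for the matching. At each vertex $v \in V(G)$, the balanced orientation produces exactly $d$ outgoing edges. Since $d$ is assumed to be even, we can partition these outgoing edges at $v$ into $d/2$ unordered pairs; I would make this partition uniformly at random among all $(d-1)(d-3)\cdots 3\cdot 1$ such pairings, using additional independent randomness supplied by the iid labels at (or near) $v$.

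Each such pair $\{e, e'\}$ consists of two edges of $G$ sharing the vertex $v$, which is precisely an edge of $L(G)$. Let $M$ denote the collection of all such pairs over all vertices of $G$. I claim that $M$ is a perfect matching of $L(G)$. Every vertex of $L(G)$ (that is, every edge $e$ of $G$) is covered by exactly one pair of $M$, namely the pair at its tail: it cannot be covered at its head, since the pairings at any vertex use only the outgoing edges at that vertex. Moreover, no two pairs of $M$ share an $L(G)$-vertex: pairs at a common vertex $v$ are disjoint by construction, and two pairs located at distinct vertices $u\neq v$ cannot share an edge of $G$ because each edge of $G$ has a unique tail under the orientation and therefore belongs to the pairing at exactly one vertex. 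Hence $M$ is both a matching and a covering, i.e.\ a perfect matching.

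Every step in the construction is local and $\Aut$-equivariant, so assuming we are working in the same setting as the preceding proposition about Schreier decorations on line graphs (where iid labels on $V(L(G))=E(G)$ are freely used to produce both the underlying orientation of $G$ and the local pairings), the output is a factor-of-iid perfect matching. I do not see a real obstacle beyond the parity condition itself: $d$ even is precisely what is needed in order to pair up the outgoing edges at each vertex, while the rest of the argument is combinatorial bookkeeping. It is worth noting that this proof strategy genuinely breaks down for odd $d$, since one cannot simultaneously pair up outgoing edges at every vertex; recovering a matching in that regime would require a different construction, not merely a tweak of the present one.
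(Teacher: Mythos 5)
Your proposal is correct and takes essentially the same approach as the paper: at each vertex of $G$ you pair up the $d$ edges of one orientation class (you use the outgoing edges; the paper uses the incoming ones, which is the same argument up to reversing the orientation) and observe that each edge of $G$ has a unique tail, so the pairs partition $V(L(G))$. The parity hypothesis $d$ even is used in exactly the same place, to allow a perfect pairing within each vertex's $d$-element outgoing (resp.\ incoming) set.
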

\begin{proof}
Every vertex $v$ of $G$ gives rise to $K_{2d}$ in $L(G)$ with $d$ vertices labelled `in' and $d$ labelled `out'. As $d$ is even, in each such clique, we can pick a (random) perfect matching on the vertices labelled `in' -- crucially, these are labelled `out' with respect to the other cliques they are in. We claim that this is a perfect matching. Similarly as before, the balanced orientation of $G$ gives a vertex decoration of $L(G)$ such that every vertex is labelled exactly once with `in' and once with `out', therefore at every vertex, there is exactly one matched edge to be chosen.
\end{proof}

\section{Open questions and remarks}
\label{section:open_questions}

\begin{question}\label{qtn:equival}
Is it true that for all $2d$-regular graphs $G$ which are vertex- or edge-transitive, the following are equivalent?
\begin{enumerate}
    \item There is a factor of iid which is a proper edge $2d$-colouring of $G$ a.s.
    \item There is a factor of iid which is a perfect matching of $G$ a.s.
    \item There is a factor of iid which is a Schreier decoration of $G$ a.s.
    \item There is a factor of iid which is a balanced orientation of $G$ a.s.
\end{enumerate}
\end{question}

We have demonstrated in this paper that all four of the structures exist as factors of iid for $\Lambda_\square^d, d\geq2$, for $H\times P$ where $H$ is a finite $2d$-regular graph with $\chi'(H)=2d$, and more loosely speaking for graphs that are made up of locally identifiable copies of these. We have also constructed Schreier decorations and balanced orientations on more planar lattices than $\Lambda_\square$ and believe that the hierarchies which are available on them should make it possible to obtain proper edge $2d$-colourings too. Also pointing in this direction is the fact that all amenable Cayley graphs have got invariant random perfect matchings \cite{ConleyKechrisTucker, csoka2017invariant} -- the lattices $T$ and $K$ are indeed Cayley graphs of $G_T=\langle a,b,c ~|~ a^3=b^3=c^3=abc=e\rangle$ and $G_K=\langle a,b ~|~ a^3=b^3=(ab)^3=e\rangle$ respectively.

On the other hand, $H\times P$ where $|V(H)|$ is odd forms the prominent example of a class of graphs where none of the four factors exist. We have shown in Proposition~\ref{prop:finite_times_Z} that no balanced orientation of $H\times P$ is a factor of iid, and the proof that there is no factor-of-iid perfect matching follows similar lines. Indeed, suppose a perfect matching $M$ of $H\times P$ is given, and let $n(i)$ be the number of edges in $M$ of the form $(u,v_i)(u,v_{i+1})$. Then for every $i\in\mathbb{Z}$, if $n(i)$ is odd then $n(i+1)$ must be even and if $n(i)$ is even then $n(i+1)$ must be odd. This is, however, in contradiction with the existence correlation decay in factor of iid processes.

In our parallel paper \cite[Definition 22]{ourselves}, we further show a class $\mathcal{C}^*_{2d}=\{G^* : G \text{ is }2d\text{-regular}\}$ of $2d$-regular bipartite graphs for which a proper edge $2d$-colouring exists as a factor of iid if and only if perfect matching does if and only if Schreier decoration does. All the above leads us to tentatively conjecture that the answer to Question~\ref{qtn:equival} is yes for amenable graphs.

It also seems that if the transitivity assumption is relaxed, the four structures are listed in a strictly decreasing order of difficulty.
As we said, $H\times P$ where $|V(H)|$ is odd has not got a factor-of-iid balanced orientation, which by \cite[Lemma 25]{ourselves} means $(H\times P)^*\in\mathcal{C}^*_{2d}$ has no factor-of-iid perfect matching. Thus $(H\times P)^*$ has no factor-of-iid edge $2d$-colouring either by \cite[Proposition 5]{ourselves}. The non-transitivity of $(H\times P)^*$, however, allows for locally recognisable factorisation of the graph into copies of $K_{d,2d}$, which implies the existence of factor-of-iid balanced orientation whenever $d$ is even.

To get graphs with a factor-of-iid Schreier decoration but without a factor-of-iid perfect matching, we can start with any $2d$-regular $G$
satisfying Question~\ref{qtn:equival} in its positive sense and by attaching two pendant $K_{2d+5}^-$ to every vertex obtain a $2d+4$-regular graph which has no perfect matching at all.
Alternatively, we can construct graphs like the one in Figure~\ref{fig:fiid_Sch_no_fiid_pm} which have deterministic, but not factor-of-iid perfect matchings. Any perfect matching contains two or no edges of the big squares in an alternating fashion, which again violates correlation decay. However, Schreier decoration can be constructed by colouring the two edges to the left of a cut vertex red and to the right blue, or vice versa, chosen randomly and independently at each cut vertex. Each remaining finite component properly extends this 2-colouring and monochromatic cycles get a random strong orientation.

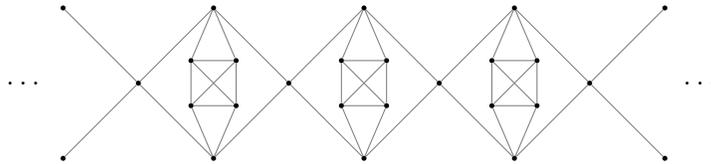
\begin{figure}[ht] 
     \centering
         \begin{tikzpicture}[node distance=1cm,
main node/.style={circle,minimum width=1.9pt, fill, inner sep=0pt,outer sep = 0pt},
red node/.style={circle,minimum width=2.2pt, fill,red, inner sep=0pt,outer sep = 0pt},
blue node/.style={circle,minimum width=2.2pt, fill,blue, inner sep=0pt,outer sep = 0pt},
square node/.style={draw,regular polygon,regular polygon sides=4,fill,inner sep=0.9pt,outer sep=0.1pt},
triangle node/.style={draw,regular polygon,regular polygon sides=3,fill,inner sep=0.8pt,outer sep=0.1pt}]   

  \node at (-4.5,0) {\ldots};
  
  \node[main node] (w) at (-4,1)[label=right:{}] {};  
  \node[main node] (z) at (-4,-1)[label=right:{}] {};
  \node[main node] (a) at (-3,0)[label=right:{}] {};
  \node[main node] (b) at (-2,1)[label=right:{}] {};
  \node[main node] (b_1) at (-2.3,0.3)[label=right:{}] {};
  \node[main node] (b_2) at (-1.7, 0.3)[label=right:{}] {};
  \node[main node] (c_1) at (-2.3,-0.3)[label=right:{}] {};
  \node[main node] (c_2) at (-1.7,-0.3)[label=right:{}] {};
  \node[main node] (c) at (-2,-1)[label=right:{}] {};
  \node[main node] (d) at (-1,0)[label=right:{}] {};
  \node[main node] (e) at (0,1)[label=right:{}] {};
  \node[main node] (e_1) at (-0.3,0.3)[label=right:{}] {};
  \node[main node] (e_2) at (0.3, 0.3)[label=right:{}] {};
  \node[main node] (f_1) at (-0.3,-0.3)[label=right:{}] {};
  \node[main node] (f_2) at (0.3,-0.3)[label=right:{}] {};
  \node[main node] (f) at (0,-1)[label=right:{}] {};
  \node[main node] (g) at (1,0)[label=right:{}] {};
  \node[main node] (h) at (2,1)[label=right:{}] {};
  \node[main node] (h_1) at (1.7,0.3)[label=right:{}] {};
  \node[main node] (h_2) at (2.3, 0.3)[label=right:{}] {};
  \node[main node] (i_1) at (1.7,-0.3)[label=right:{}] {};
  \node[main node] (i_2) at (2.3,-0.3)[label=right:{}] {};
  \node[main node] (i) at (2,-1)[label=right:{}] {};
  \node[main node] (j) at (3,0)[label=right:{}] {};
  \node[main node] (k) at (4,1)[label=right:{}] {};
  \node[main node] (l) at (4,-1)[label=right:{}] {};
  
  \node at (4.5,0) {\ldots};
  
  \path [draw=gray, thin] (a) edge (w);
  \path [draw=gray, thin] (a) edge (z);
  
  \path [draw=gray, thin] (a) edge (b);
  \path [draw=gray, thin] (a) edge (c);
  \path [draw=gray, thin] (b) edge (b_1);
  \path [draw=gray, thin] (b) edge (b_2);
  \path [draw=gray, thin] (c) edge (c_1);
  \path [draw=gray, thin] (c) edge (c_2);
  \path [draw=gray, thin] (b_1) edge (b_2);
  \path [draw=gray, thin] (b_1) edge (c_1);
  \path [draw=gray, thin] (b_1) edge (c_2);
  \path [draw=gray, thin] (b_2) edge (c_1);
  \path [draw=gray, thin] (b_2) edge (c_2);
  \path [draw=gray, thin] (c_1) edge (c_2);
  \path [draw=gray, thin] (d) edge (b);
  \path [draw=gray, thin] (d) edge (c);
  
  \path [draw=gray, thin] (d) edge (e);
  \path [draw=gray, thin] (d) edge (f);
  \path [draw=gray, thin] (e) edge (e_1);
  \path [draw=gray, thin] (e) edge (e_2);
  \path [draw=gray, thin] (f) edge (f_1);
  \path [draw=gray, thin] (f) edge (f_2);
  \path [draw=gray, thin] (e_1) edge (e_2);
  \path [draw=gray, thin] (e_1) edge (f_1);
  \path [draw=gray, thin] (e_1) edge (f_2);
  \path [draw=gray, thin] (e_2) edge (f_1);
  \path [draw=gray, thin] (e_2) edge (f_2);
  \path [draw=gray, thin] (f_1) edge (f_2);

  \path [draw=gray, thin] (g) edge (e);
  \path [draw=gray, thin] (g) edge (f);
  
  \path [draw=gray, thin] (g) edge (h);
  \path [draw=gray, thin] (g) edge (i);
  \path [draw=gray, thin] (h) edge (h_1);
  \path [draw=gray, thin] (h) edge (h_2);
  \path [draw=gray, thin] (i) edge (i_1);
  \path [draw=gray, thin] (i) edge (i_2);
  \path [draw=gray, thin] (h_1) edge (h_2);
  \path [draw=gray, thin] (h_1) edge (i_1);
  \path [draw=gray, thin] (h_1) edge (i_2);
  \path [draw=gray, thin] (h_2) edge (i_1);
  \path [draw=gray, thin] (h_2) edge (i_2);
  \path [draw=gray, thin] (i_1) edge (i_2);
  \path [draw=gray, thin] (j) edge (h);
  \path [draw=gray, thin] (j) edge (i);
  
  \path [draw=gray, thin] (j) edge (k);
  \path [draw=gray, thin] (j) edge (l);
  \end{tikzpicture}
  \caption{An infinite graph which admits a factor-of-iid Schreier decoration but not a factor-of-iid perfect matching}
  \label{fig:fiid_Sch_no_fiid_pm}
\end{figure}

Finally, any finite $2d$-regular graph has a factor-of-iid Schreier decoration, so whenever its chromatic index equals $2d+1$, we again get an example in which a proper edge $2d$-colouring does not exist, but when a perfect matching does, like in the Meredith graph, we get an instance for the last type which has exactly three of the four structures as factors of iid.

\begin{question}
Is there a $2d$-regular quasi-transitive graph which has a factor-of-iid proper edge $2d$-colouring or perfect matching, but not a factor-of-iid Schreier decoration or balanced orientation?
\end{question}

We also point out that all the Schreier decorations constructed in this paper have no infinite monochromatic paths, i.e.~no infinite orbits.

\begin{question} \label{qtn:sch_infinite_monochrom}
Is there a factor of iid Schreier decoration on the square lattice that has infinite monochromatic paths with positive probability? Or on any of the other lattices we consider? Or on any transitive graph? 
\end{question}
By ergodicity, this would imply that the factor of iid Schreier decoration has infinite monochromatic paths with probability 1. The next question aims to improve our coun\-ter\-ex\-am\-ples.

\begin{question} \label{qtn:non_Z_non_example}
Give an example of a transitive graph that is not quasi-isometric to $\mathbb{Z}$ and has no factor of iid balanced orientation. Or has no factor of iid Schreier decoration.
\end{question}

In fact, the best would be to have a precise structural description of when these decorations exist as factors of iid.
\begin{question}\label{qtn:characterisation}
Give a necessary and sufficient condition for a $2d$-regular transitive graph to have a factor of iid balanced orientation and/or Schreier decoration.
\end{question}

\bibliography{hivatkozat}

\begin{thebibliography}{10}

\bibitem{Baxter}
R.~J. Baxter.
\newblock {\em Exactly Solved Models in Statistical Mechanics}.
\newblock Academic Press, London, 1982.

\bibitem{ourselves}
Ferenc Bencs, Aranka Hru\v{s}kov{\'a}, and L{\'a}szlo~M{\'a}rton T\'{o}th.
\newblock Factor-of-iid balanced orientation of non-amenable graphs.
\newblock {\em preprint}, 2021.

\bibitem{beyond}
Itai Benjamini and Oded Schramm.
\newblock Percolation beyond $\mathbb{Z}^d$, many questions and a few answers.
\newblock {\em Electronic Communications in Probability}, pages 71--82, 1996.

\bibitem{perfectmatch_inregular}
Zoltán~L. Blázsik, Jay Cummings, and Willem~H. Haemers.
\newblock Cospectral regular graphs with and without a perfect matching.
\newblock {\em Discrete Mathematics}, 338(3):199--201, 2015.

\bibitem{BollRior}
B{\'e}la Bollob{\'a}s and Oliver Riordan.
\newblock {\em Percolation}.
\newblock Cambridge University Press, 2006.

\bibitem{matchingpairs}
B{\'e}la Bollob{\'a}s and Oliver Riordan.
\newblock Percolation on dual lattices with $k$-fold symmetry.
\newblock {\em Random Structures \& Algorithms}, 32(4):463--472, 2008.

\bibitem{perfectmatch_in4}
Carlos Carbonera and Jason Shepherd.
\newblock On the existence of a perfect matching for 4-regular graphs derived
  from quadrilateral meshes. {SCI} {I}nstitute.
\newblock 04 2006.

\bibitem{conley2020borel}
Clinton Conley, Steve Jackson, Andrew Marks, Brandon Seward, and Robin
  Tucker-Drob.
\newblock Borel asymptotic dimension and hyperfinite equivalence relations.
\newblock {\em preprint arXiv:2009.06721}, 2020.

\bibitem{ConleyKechrisTucker}
T.~Conley, A.~Kechris, and R.~Tucker-Drob.
\newblock Ultraproducts of measure preserving actions and graph combinatorics.
\newblock {\em Ergodic Theory and Dynamical Systems}, 33:334--374, 2013.

\bibitem{csoka2017invariant}
Endre Cs{\'o}ka and Gábor Lippner.
\newblock Invariant random perfect matchings in {C}ayley graphs.
\newblock {\em Groups, Geometry, and Dynamics}, 11(1):211--244, 2017.

\bibitem{DNS}
Michael Damron, Charles~M Newman, and Vladas Sidoravicius.
\newblock Absence of site percolation at criticality in
  $\mathbb{Z}^2\times\{0,1\}$.
\newblock {\em Random Structures \& Algorithms}, 47(2):328--340, 2015.

\bibitem{gao2015forcing}
Su~Gao, Steve Jackson, Edward Krohne, and Brandon Seward.
\newblock Forcing constructions and countable {B}orel equivalence relations.
\newblock {\em preprint arXiv:1503.07822}, 2015.

\bibitem{gao2018continuous}
Su~Gao, Steve Jackson, Edward Krohne, and Brandon Seward.
\newblock Continuous combinatorics of abelian group actions.
\newblock {\em preprint arXiv:1803.03872}, 2018.

\bibitem{grebik2021toasts}
Jan Greb{\'\i}k and V{\'a}clav Rozho{\v{n}}.
\newblock Of {T}oasts and {T}ails.
\newblock {\em preprint arXiv:2103.08394}, 2021.

\bibitem{lyons2017factors}
Russell Lyons.
\newblock Factors of {IID} on trees.
\newblock {\em Combinatorics, Probability \& Computing}, 26(2):285, 2017.

\bibitem{marks2017borel}
Andrew~S Marks and Spencer~T Unger.
\newblock Borel circle squaring.
\newblock {\em Annals of Mathematics}, pages 581--605, 2017.

\bibitem{ray2019finitary}
Gourab Ray and Yinon Spinka.
\newblock Finitary codings for gradient models and a new graphical
  representation for the six-vertex model.
\newblock {\em preprint arXiv:1908.09056}, 2019.

\bibitem{Russo}
Lucio Russo.
\newblock On the critical percolation probabilities.
\newblock {\em Zeitschrift f{\"u}r Wahrscheinlichkeitstheorie und verwandte
  Gebiete}, 56(2):229--237, 1981.

\bibitem{schrijver1983bounds}
Alexander Schrijver.
\newblock Bounds on the number of {E}ulerian orientations.
\newblock {\em Combinatorica}, 3(3-4):375--380, 1983.

\bibitem{thornton2020factor}
Riley Thornton.
\newblock Factor {M}aps for {A}utomorphism {G}roups via {C}ayley {D}iagrams.
\newblock {\em preprint arXiv:2011.14604v2}, 2021.

\bibitem{thornton2020orienting}
Riley Thornton.
\newblock Orienting {B}orel graphs.
\newblock {\em preprint arXiv:2001.01319v2}, 2021.

\bibitem{Adam}
{\'A}d{\'a}m Tim{\'a}r.
\newblock Invariant matchings of exponential tail on coin flips in
  $\mathbb{Z}^d$.
\newblock {\em preprint arXiv:0909.1090}, 2009.

\bibitem{toth2019invariant}
L{\'a}szl{\'o}~M{\'a}rton T{\'o}th.
\newblock Invariant {S}chreier decorations of unimodular random networks.
\newblock {\em preprint arXiv:1906.03137}, 2019.

\bibitem{Welsh}
D.~J.~A. Welsh.
\newblock The computational complexity of some classical problems from
  statistical physics.
\newblock {\em Disorder in Physical Systems}, 1990.

\end{thebibliography}
\bibliographystyle{plain}

\end{document}